\documentclass{article}

\usepackage{amsfonts,amssymb,amsmath,amsthm,color,enumerate,graphicx,pictex,stmaryrd,xspace}
\usepackage[mathscr]{eucal}

\newtheorem{example}{Example} 
\newtheorem{question}[example]{Question}
\newtheorem{corollary}[example]{Corollary}
 \newtheorem{lemma}[example]{Lemma}
\newtheorem{theorem}[example]{Theorem}
\newtheorem{proposition}[example]{Proposition}
\newtheorem{conjecture}{Conjecture} 

\newcommand{\Coll}{{\tt Coll}} 
\newcommand{\forces}{\Vdash}

\title{Baire Spaces and Infinite Games} 

\author{Fred Galvin and Marion Scheepers}

\date{\today}

\begin{document}
\maketitle

\begin{abstract}
It is well known that if the nonempty player of the Banach-Mazur game has a winning strategy on a space, then that space is Baire in all powers even in the box product topology. The converse of this implication may also be true: We know of no consistency result to the contrary. In this paper we establish the consistency of the converse relative to the consistency of the existence of a proper class of measurable cardinals.
\footnote{
 {\bf 2010 Mathematics Subject Classification:} 03E55, 03E60, 03E65, 54B10, 54E52, 91A44, 91A46

 $\;$ \lowercase{{\bf {\uppercase{K}}ey words and phrases:} {\uppercase{B}}aire space, infinite game, measurable cardinal}. }
\end{abstract}

A topological space is said to be \emph{Baire} if the intersection of any countable collection of dense open subsets is dense. This property is associated with game theory through the following classical game, called the \emph{Banach-Mazur game}\footnote{For the history of, and a good bibliography for the Banach-Mazur game, see \cite{Telgarsky}.}:

A topological space $X$ is given. Two players, White (or ONE) and Black (or TWO) play the following game: There is an inning per finite ordinal. White starts the game by choosing a nonempty open set $B_0\subseteq X$, and Black responds by choosing a nonempty open set $B_1\subseteq B_0$. In the $n+1^{st}$ inning White chooses a nonempty open set $B_{2n+2}\subseteq B_{2n+1}$, and Black responds with a nonempty open set $B_{2n+3}\subseteq B_{2n+2}$, and so on. In this way the two players produce a \emph{play}
\[
  B_0,\, B_1,\, \cdots,\, B_{2n},\, B_{2n+1},\, \cdots \, (n\in{\omega}).
\]
White wins this play if $\bigcap_{n<{\omega}} B_n = \emptyset$, and Black wins otherwise. The symbol ${\sf BM}(X)$ denotes this game. 
Two of the central results about this game are:
\begin{theorem}[Banach and Mazur, Oxtoby] $X$ is a Baire space if, and only if, White has no winning strategy in ${\sf BM}(X)$. 
\end{theorem}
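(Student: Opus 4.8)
The plan is to establish each direction in contrapositive form: (i) if $X$ is not a Baire space, then White has a winning strategy in ${\sf BM}(X)$; and (ii) if White has a winning strategy in ${\sf BM}(X)$, then $X$ is not a Baire space.

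Direction (i) is the routine half. Assuming $X$ is not Baire, I would fix dense open sets $D_n$ ($n<\omega$) and a nonempty open set $U$ with $U\cap\bigcap_{n<\omega}D_n=\emptyset$, and have White open with $B_0=U$ and, whenever Black has just played $B_{2n+1}$, respond with $B_{2n+2}=B_{2n+1}\cap D_n$. Each such move is legal since $D_n$ is dense open, and every play following this strategy satisfies $\bigcap_{k<\omega}B_k\subseteq B_0\cap\bigcap_{n<\omega}D_n=\emptyset$, so White wins.

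Direction (ii) is where the work lies. Fix a winning strategy $\sigma$ for White and let $B_0=\sigma(\langle\rangle)$ be its opening move, a nonempty open set. I would build, by recursion on $n$, a family $\mathcal{P}_n$ of finite plays that are consistent with $\sigma$ and end with a move of White, maintaining two invariants: the final White-moves $\{W_p:p\in\mathcal{P}_n\}$ are pairwise disjoint, and $G_n:=\bigcup_{p\in\mathcal{P}_n}W_p$ is dense in $B_0$. Start with $\mathcal{P}_0=\{\langle B_0\rangle\}$. Given $\mathcal{P}_n$, for each $p\in\mathcal{P}_n$ use Zorn's Lemma to choose a maximal family $\mathcal V_p$ of nonempty open sets $V\subseteq W_p$ for which the White-responses $\sigma(p^\frown\langle V\rangle)$, $V\in\mathcal V_p$, are pairwise disjoint; then put every play $p^\frown\langle V,\sigma(p^\frown\langle V\rangle)\rangle$ (for $p\in\mathcal{P}_n$, $V\in\mathcal V_p$) into $\mathcal{P}_{n+1}$. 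The key observation is that $\bigcup_{V\in\mathcal V_p}\sigma(p^\frown\langle V\rangle)$ is already dense in $W_p$: if some nonempty open $V_0\subseteq W_p$ missed it, then $\sigma(p^\frown\langle V_0\rangle)\subseteq V_0$ would be a response disjoint from all the others, contradicting maximality of $\mathcal V_p$. Combined with the induction hypothesis this keeps $G_{n+1}$ dense in $B_0$, while pairwise disjointness at level $n+1$ is immediate from pairwise disjointness at level $n$ together with the disjointness built into each $\mathcal V_p$. Finally I would argue $\bigcap_{n<\omega}G_n=\emptyset$: a point $x$ in the intersection lies in a unique $W_{p_n}$ for each $n$ (by disjointness), the $p_n$ form a chain (if $p_{n+1}=q^\frown\langle V,\sigma(q^\frown\langle V\rangle)\rangle$ then $x\in W_{p_{n+1}}\subseteq V\subseteq W_q$ forces $q=p_n$), so $\bigcup_{n<\omega}p_n$ is an infinite play consistent with $\sigma$ in which $x$ belongs to every move (each Black move $V$ contains the next White move $W_{p_{n+1}}\ni x$) — contradicting that $\sigma$ is winning. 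Setting $D_n=G_n\cup(X\setminus\overline{B_0})$ yields dense open subsets of $X$ whose intersection is disjoint from the nonempty open set $B_0$, so $X$ is not Baire.

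I expect the main obstacle to be exactly this recursion in (ii): simultaneously preserving pairwise disjointness of the level-$n$ White-moves and density of their union in $B_0$ through all of $\omega$ stages. Disjointness is cheap, but density rests on the specific structural feature that White's reply $\sigma(p^\frown\langle V\rangle)$ is forced to lie inside Black's set $V$; this is precisely what makes a maximal pairwise-disjoint family of such replies already dense in $W_p$, and it is the crux of the whole argument. Some care is also needed in the bookkeeping so that the chains assembled at the end are genuine plays consistent with $\sigma$ rather than merely $\sigma$-legal positions.
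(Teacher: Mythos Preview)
The paper does not give a proof of this theorem; it is quoted as a classical result of Banach, Mazur, and Oxtoby and left unproved, so there is no paper-side argument to compare against. Your proposal is the standard Oxtoby argument and is correct: direction (i) is immediate, and in direction (ii) the recursive construction of maximal pairwise-disjoint families of $\sigma$-responses, together with the observation that $\sigma(p^\frown\langle V\rangle)\subseteq V$ forces such a maximal family to be dense in $W_p$, is exactly the classical mechanism. The bookkeeping you outline (uniqueness of $p_n$ from disjointness, the chain property from $W_{p_{n+1}}\subseteq W_q$, and the resulting infinite $\sigma$-play containing $x$) is sound.
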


\begin{theorem}[White \cite{White}, Theorem 3 (4)]\label{boxpowers} If Black has a winning strategy in the Banach-Mazur game on a space, then all powers of that space, considered in the box product topology, are Baire spaces.
\end{theorem}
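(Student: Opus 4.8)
The plan is to convert a winning strategy for Black in ${\sf BM}(X)$ into a winning strategy for Black in ${\sf BM}(X^\kappa)$, where $X^\kappa$ is equipped with the box product topology and $\kappa$ is an arbitrary cardinal. Once that is done we are finished: no game can have winning strategies for both players (playing the two strategies against each other would yield a single play won by both), so a winning strategy for Black in ${\sf BM}(X^\kappa)$ means White has none there, and then the Banach--Mazur--Oxtoby theorem stated above tells us $X^\kappa$ is Baire. (Alternatively, one sees the Baire property directly by having White play into a prescribed sequence of dense open sets while Black follows the winning strategy.)

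Fix a winning strategy $\sigma$ for Black in ${\sf BM}(X)$. The construction runs $\kappa$ copies of ${\sf BM}(X)$ in parallel, one in each coordinate, each steered by $\sigma$. In ${\sf BM}(X^\kappa)$, whenever White plays a nonempty box-open set $B_{2n}$, Black first chooses a basic open box $\prod_{\alpha<\kappa}U^{2n}_\alpha\subseteq B_{2n}$ with every factor $U^{2n}_\alpha$ nonempty; this is possible since such boxes form a base and $B_{2n}\neq\emptyset$. In coordinate $\alpha$, Black regards $U^{2n}_\alpha$ as White's latest move in the $\alpha$-th copy of ${\sf BM}(X)$, forms $\sigma$'s response $U^{2n+1}_\alpha$, and then plays $B_{2n+1}:=\prod_{\alpha<\kappa}U^{2n+1}_\alpha$ in ${\sf BM}(X^\kappa)$. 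This is a legal move there because $U^{2n+1}_\alpha\subseteq U^{2n}_\alpha$ for each $\alpha$ and $\prod_\alpha U^{2n}_\alpha\subseteq B_{2n}$, so $B_{2n+1}\subseteq B_{2n}$. Moreover, in each fixed coordinate $\alpha$ the sequence $U^0_\alpha\supseteq U^1_\alpha\supseteq U^2_\alpha\supseteq\cdots$ is a legitimate play of ${\sf BM}(X)$ in which Black used $\sigma$: the inclusions $U^{2n+1}_\alpha\subseteq U^{2n}_\alpha$ hold by the rules governing $\sigma$, while $U^{2n+2}_\alpha\subseteq U^{2n+1}_\alpha$ follows from $\prod_\alpha U^{2n+2}_\alpha\subseteq B_{2n+2}\subseteq B_{2n+1}=\prod_\alpha U^{2n+1}_\alpha$ together with the observation that an inclusion between boxes all of whose factors are nonempty must be coordinatewise.

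Now consider any play of ${\sf BM}(X^\kappa)$ in which Black follows this strategy. For each $\alpha$, since $\sigma$ is a winning strategy, $\bigcap_{n<\omega}U^n_\alpha\neq\emptyset$; pick (by the axiom of choice) a point $x_\alpha$ in it. Then $x:=(x_\alpha)_{\alpha<\kappa}$ lies in $\prod_\alpha U^m_\alpha$ for every $m$, and since $\prod_\alpha U^m_\alpha\subseteq B_m$ for every $m$, we get $x\in\bigcap_{m<\omega}B_m$, so Black has won. Hence the strategy described is winning, which completes the argument. The one point requiring care is not a deep obstacle but the interface between $X^\kappa$ and the coordinate games: White's moves in ${\sf BM}(X^\kappa)$ need not be basic open boxes, so at every turn Black must first shrink to a basic open subbox while keeping all factors nonempty, and one must use that inclusions of boxes with nonempty factors are coordinatewise in order to know the coordinate sequences are genuine plays of ${\sf BM}(X)$. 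The whole argument is uniform in $\kappa$, whether finite or infinite.
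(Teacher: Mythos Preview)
Your proof is correct. Note, however, that the paper does not give its own proof of this theorem: it is stated with attribution to White \cite{White}, Theorem~3(4), and used as a known result motivating the converse conjecture. So there is nothing in the paper to compare against.

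That said, your argument is the natural one and essentially the one White gives: run $\kappa$ independent copies of ${\sf BM}(X)$, one per coordinate, each steered by Black's winning strategy $\sigma$, and use the box topology to assemble the coordinate responses into a single legal move in ${\sf BM}(X^\kappa)$. You have identified and handled the only genuine technical point, namely that White's moves in ${\sf BM}(X^\kappa)$ need not be basic boxes, so Black must first refine to a box $\prod_\alpha U^{2n}_\alpha$ with all factors nonempty, and that the resulting coordinate sequences $(U^n_\alpha)_{n<\omega}$ really are $\sigma$-plays of ${\sf BM}(X)$ because inclusions between boxes with nonempty factors are coordinatewise. The final step, choosing $x_\alpha\in\bigcap_n U^n_\alpha$ via the axiom of choice and observing $(x_\alpha)_\alpha\in\bigcap_n B_n$, is exactly right.
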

We conjecture that the converse of Theorem \ref{boxpowers} is true. This conjecture is 
the motivation for the rest of the paper.

In Section 1 of the paper we make the conjecture more precise, and relate it to another conjecture regarding the classical Gale-Stewart game. We formulate a conjecture regarding existence of winning strategies in a variant of the Gale-Stewart game, and show that this second conjecture implies the first one (Theorem \ref{BMGSconnection}). In Section 2 we introduce another class of games, invented by Richard Laver, and show how existence of winning strategies in Laver's game implies the existence of winning strategies in the above variants of the Gale-Stewart game (Theorem \ref{thm:lavertoGS}). In Section 3 we introduce yet another class of games based on the cut-and-choose games introduced by Ulam, and show how existence of winning strategies in these games imply existence of winning strategies in Laver's games (Theorem \ref{thm:ugtolaver}). In Section 4 we study a game on ideals and explain Baumgartner's result that existence of winning strategies in the game on ideals implies the existence of winning strategies in the cut-and-choose games (Theorem \ref{baumgth}). In Section 5 we show how truth of some instances of the original two conjectures can be obtained from a hypothesis about the existence of certain large cardinals. We discuss how the full conjectures 1 and 2 follow from the hypothesis that there is a proper class of large cardinals of precise type. In Section 6  we return to {\sf ZFC} and give a number of examples of the scenario that Conjecture 2 is about. Finally, in Section 7, we state some open problems.

We assume the consistency of {\sf ZFC}, the base theory for this paper. Any other hypotheses or axioms used will be explicitly identified. Also, all products of topological spaces in this paper are assumed to carry the box product topology.

\section{Two conjectures and multiboard games}

A family $\mathcal{B}$ of nonempty open subsets of a topological space $X$ is said to be a $\pi$-\emph{base} if there is for each nonempty open subset $U$ of $X$ an element $V\in\mathcal{B}$ such that $V\subseteq U$. We define the $\pi$-weight, denoted $\pi(X)$, to be the minimal cardinal $\lambda$ such that $X$ has a $\pi$-base of cardinality $\lambda$.

For $\lambda$ and $\kappa$ positive cardinal numbers, we define:
\begin{quote}
 $ {\sf B}(\lambda,\kappa)$: If $X$ is a space with $\pi(X) \le \lambda$ and the power
 $X^{\kappa}$, considered in the box product topology, is Baire, then Black has a winning strategy in the game $\text{BM}(X)$. 
 \end{quote}
Here is a more precise statement of our conjecture: 
\begin{conjecture}\label{galvin1} 
For each $\lambda\ge 1$ there is a $\kappa\ge 1$ such that ${\sf B}(\lambda,\kappa)$ is true.
\end{conjecture}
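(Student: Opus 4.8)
The plan is to establish the conjecture not from {\sf ZFC} alone but under the large-cardinal hypothesis developed in Section 5, namely that there is a proper class of large cardinals of the requisite type; I expect the statement to lie beyond the reach of {\sf ZFC} alone, since the entire architecture of the paper is designed to push the difficulty down into a combinatorial problem about ideals whose resolution genuinely needs measurable-type cardinals. Fix $\lambda\ge 1$. My task is to exhibit a single $\kappa\ge 1$ for which ${\sf B}(\lambda,\kappa)$ holds, that is, for which Baireness of $X^{\kappa}$ in the box topology entails a winning strategy for Black in ${\sf BM}(X)$ whenever $\pi(X)\le\lambda$.

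First I would reduce ${\sf B}(\lambda,\kappa)$ to an instance of Conjecture 2 by invoking Theorem \ref{BMGSconnection}, which converts the sought winning strategy for Black in the Banach-Mazur game into a winning strategy in the corresponding multiboard Gale-Stewart game; the hypothesis that $X^{\kappa}$ is Baire is exactly what feeds the $\kappa$ boards. I would then descend the chain of implications supplied by the later sections: Theorem \ref{thm:lavertoGS} lets me deduce the required Gale-Stewart strategy from a winning strategy in Laver's game, Theorem \ref{thm:ugtolaver} supplies Laver's strategy from a winning strategy in the Ulam-style cut-and-choose game, and Baumgartner's Theorem \ref{baumgth} supplies the cut-and-choose strategy from a winning strategy in the game on ideals. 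Thus the whole conjecture, for the fixed $\lambda$, is reduced to the single assertion that the appropriate player has a winning strategy in the ideal game for index sets of the relevant size.

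The remaining and decisive step is to construct that winning strategy in the ideal game, and this is where the large cardinals enter. The idea is to take $\kappa$ to be, or to be governed by, a measurable cardinal above $\lambda$ and to build the strategy directly from the normal $\kappa$-complete ideal it carries, with no appeal to forcing: completeness of the ideal above $\lambda$ lets a single move of the player absorb the $\lambda$-sized branching that a $\pi$-base of $X$ contributes, while the ultrapower elementary embedding supplied by the associated measure lets the player read off a winning continuation and pull it back to the ground model, so that the strategy can always proceed. With $\kappa$ so fixed for the given $\lambda$, the four reductions deliver ${\sf B}(\lambda,\kappa)$; and since $\lambda\ge 1$ was arbitrary, Conjecture \ref{galvin1} follows.

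The hard part, and the reason a proper class of cardinals rather than a single one is required, is twofold. The local obstacle is the construction of the winning strategy in the ideal game itself: one must convert the static combinatorics of a highly complete ideal and its embedding into a dynamic strategy that responds to an adversary across all $\omega$ innings, which is precisely the content that Baumgartner's argument behind Theorem \ref{baumgth} and the embedding-theoretic analysis of the measure must jointly supply. The global obstacle is uniformity in $\lambda$: a fixed measurable $\kappa$ can only handle spaces whose $\pi$-weight stays below it, so covering every $\lambda\ge 1$ forces arbitrarily large such cardinals, whence the hypothesis of a proper class. I expect essentially all of the genuine difficulty to sit in the bottom link of the chain---the passage from a large cardinal to a winning strategy in the ideal game---with the four reductions above it serving as robust, if intricate, scaffolding that makes that one combinatorial victory propagate all the way up to the Banach-Mazur game.
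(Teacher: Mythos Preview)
The statement is a \emph{conjecture}; the paper does not prove it. What the paper establishes is its \emph{consistency} relative to a proper class of measurable cardinals (Corollaries \ref{congalvin2} and \ref{congalvin1}), and this is obtained only after passing to a forcing extension. Your chain of reductions ${\sf I}(\lambda,\kappa)\Rightarrow{\sf U}(\lambda,\kappa)\Rightarrow{\sf L}(\lambda,\kappa)\Rightarrow{\sf D}(\lambda,\kappa)\Rightarrow{\sf B}(\lambda,\kappa)$ matches the paper exactly, but the decisive bottom link is not what you describe.

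The genuine gap is your claim that one can ``build the strategy directly from the normal $\kappa$-complete ideal it carries, with no appeal to forcing.'' This fails. The ideal dual to a $\kappa$-complete ultrafilter on a measurable $\kappa$ is \emph{not} atomless: every positive set lies in the ultrafilter, and no ultrafilter set can be split into two ultrafilter sets. Hence neither Baumgartner's condition nor ${\sf I}(\lambda,\kappa)$ is available, and indeed the paper records Solovay's (unpublished) result that in $L[\mathcal{U}]$ the cardinal $\kappa$ is measurable yet White has \emph{no} winning strategy in ${\sf UG}(2,\kappa)$. So measurability in $V$ does not yield ${\sf U}(\lambda,\kappa)$ in $V$. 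The paper's actual route is Theorem~\ref{thm:getilambdakappa}: L\'evy-collapse a measurable $\kappa>\lambda$ to $\lambda^{++}$, and in the extension the filter generated by the ground-model ultrafilter becomes a genuine $\lambda^+$-complete normal filter whose dual ideal is highly splitting (Lemma~\ref{sizes2}) and has the ground-model $\mathcal{U}$ as a $\sigma$-closed dense family---precisely ${\sf I}(\lambda,\kappa)$. Handling all $\lambda$ simultaneously then requires the class-length Easton iteration of Theorem~\ref{getclassilambdakappa}. In short, forcing is not optional scaffolding here; it is what manufactures the splitting property that your ``direct'' argument silently assumes.
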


We are not aware of any (consistent)  examples that disprove Conjecture \ref{galvin1}. 
One of the attractive consequences of Conjecture \ref{galvin1} is:
\begin{proposition}\label{productivity}
Assume Conjecture 1. If $X$ is a Baire space and if $Y$ is a space for which all powers, considered in the box product topology, are Baire spaces, then $X\times Y$ is a Baire space.
\end{proposition}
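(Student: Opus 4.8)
The plan is to route the hypothesis on $Y$ through Conjecture~1 and then convert a hypothetical winning strategy for White on $X\times Y$ into one on $X$. First I would set $\lambda=\pi(Y)$ and invoke Conjecture~1 to obtain $\kappa\ge 1$ for which ${\sf B}(\lambda,\kappa)$ is true. Since every power of $Y$ (in the box topology) is Baire, in particular $Y^{\kappa}$ is Baire, and so ${\sf B}(\lambda,\kappa)$ hands back a winning strategy $\tau$ for Black in ${\sf BM}(Y)$. On the other side, because $X$ is Baire, the Banach--Mazur and Oxtoby theorem says White has no winning strategy in ${\sf BM}(X)$. Hence it suffices to prove the general statement: \emph{if Black has a winning strategy in ${\sf BM}(Y)$ and White has no winning strategy in ${\sf BM}(X)$, then White has no winning strategy in ${\sf BM}(X\times Y)$}; combined once more with the Banach--Mazur--Oxtoby characterization, this gives that $X\times Y$ is Baire.

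To prove that statement I would argue by contradiction: suppose $\sigma$ is a winning strategy for White in ${\sf BM}(X\times Y)$. Fix once and for all a function $\rho$ sending each nonempty open $W\subseteq X\times Y$ to a nonempty basic open box $\rho(W)\subseteq W$. I would then build a strategy $\sigma^{*}$ for White in ${\sf BM}(X)$ by maintaining two auxiliary plays: one of ${\sf BM}(X\times Y)$ in which White follows $\sigma$ and then shrinks his move by $\rho$ to a box $U\times V$, and one of ${\sf BM}(Y)$ in which White plays exactly those $Y$-coordinates $V$ while Black follows $\tau$. Concretely: when $\sigma$ (via $\rho$) produces $U\times V$, White's move in the real game ${\sf BM}(X)$ is declared to be $U$; when Black answers in the real game with $B\subseteq U$, the auxiliary ${\sf BM}(X\times Y)$-move of Black is taken to be $B\times V'$, where $V'$ is $\tau$'s response in the auxiliary ${\sf BM}(Y)$-play constructed so far. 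Each such auxiliary Black move lies inside $\rho$ of White's previous auxiliary move, so the auxiliary ${\sf BM}(X\times Y)$-play is legal and follows $\sigma$; and since $\rho$ and $\tau$ are fixed, all the auxiliary data is recoverable from the history of the real ${\sf BM}(X)$-play, so $\sigma^{*}$ is a genuine strategy.

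The payoff comes from examining a run of $\sigma^{*}$: if $B_0\supseteq B_1\supseteq\cdots$ is the resulting ${\sf BM}(X)$-play and $V_0\supseteq V_1\supseteq\cdots$ the induced ${\sf BM}(Y)$-play, then the auxiliary ${\sf BM}(X\times Y)$-play is mutually cofinal with the nested box sequence $B_0\times V_0\supseteq B_1\times V_1\supseteq\cdots$, so its intersection equals $(\bigcap_{n}B_n)\times(\bigcap_{n}V_n)$, using that a nested sequence of boxes in a box product meets coordinatewise. Since $\sigma$ wins for White, this intersection is empty; since $\tau$ wins for Black in ${\sf BM}(Y)$, we have $\bigcap_{n}V_n\ne\emptyset$; therefore $\bigcap_{n}B_n=\emptyset$. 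As the run was arbitrary, $\sigma^{*}$ is winning for White in ${\sf BM}(X)$, contradicting that $X$ is Baire. Hence no such $\sigma$ exists, and $X\times Y$ is Baire.

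I expect the only delicate point to be the bookkeeping in the construction: $\sigma$ need not return a basic open box, which is precisely why the fixed shrinking $\rho$ is needed (Black is entitled to reply inside any nonempty open subset of White's move, in particular inside $\rho(W)$), and one must check that the sequence of $Y$-coordinates genuinely forms a legal ${\sf BM}(Y)$-play in which Black obeys $\tau$, so that $\tau$'s victory can be applied. The coordinatewise computation of the intersection, the legality of the interleaved auxiliary play, and the single-valuedness of $\sigma^{*}$ once $\rho$ and $\tau$ are fixed are then routine. (The degenerate cases $X=\emptyset$ or $Y=\emptyset$ should be disposed of separately and trivially, since then $X\times Y=\emptyset$ is Baire.)
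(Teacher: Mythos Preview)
Your argument is correct and follows exactly the paper's route: use Conjecture~1 to obtain a winning strategy for Black in ${\sf BM}(Y)$, then combine this with the Baireness of $X$ to conclude that $X\times Y$ is Baire. The only difference is that the paper outsources the second step to Theorem~3(9) of \cite{White}, whereas you supply a direct proof of that result (White wins ${\sf BM}(X\times Y)$ $\Rightarrow$ White wins ${\sf BM}(X)$, via the auxiliary-play construction with the shrinking map~$\rho$); your argument for that lemma is standard and sound.
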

\begin{proof}
Let $X$ and $Y$ be as given. By Conjecture \ref{galvin1}, TWO has a winning strategy in ${\sf BM}(Y)$. By Theorem 3(9) of \cite{White}, $X\times Y$ is a Baire space.
\end{proof}

We now show that this conjecture follows from another plausible conjecture for which we are also not aware of any (consistent) counterexamples. 

Let a cardinal $\lambda\ge 2$ as well as a set $A\subseteq \,^{\omega}\lambda$ be given. The classical Gale-Stewart game ${\sf GS}_{\lambda}(A)$ between two players, ONE and TWO, is played as follows: The players play an inning per finite ordinal. In inning $n<\omega$ ONE first chooses $\xi_{2n}\in\lambda$, and then TWO responds by choosing $\xi_{2n+1}\in\lambda$. In this way the players construct a play $p = (\xi_n:n<\omega)$. ONE wins this play if $p\in A$, and otherwise, TWO wins the play. 

Let, in addition to the cardinal $\lambda\ge 2$, also a cardinal $\kappa\ge 1$ be given. Then the $\kappa$-board Gale-Stewart game ${\sf GS}_{\lambda}(A,\kappa)$ is defined as follows: Players ONE and TWO play an inning per finite ordinal. In inning $n$ ONE first selects $(f_{2n}(\alpha):\alpha <\kappa) \in \, ^{\kappa}\lambda$, and then TWO responds with $(f_{2n+1}(\alpha):\alpha<\kappa)\in \, ^{\kappa}\lambda$. In this way the players construct a play $(f_n:n<\omega)$, consisting of a sequence of members of $^{\kappa}\lambda$. ONE wins this play if there is an $\alpha<\kappa$ for which $(f_n(\alpha):n<\omega)$ is a member of $A$. Else, TWO wins.

Intuitively, the game $\text{GS}_\lambda(A,\kappa)$ can be interpreted as the two players 
playing the Gale-Stewart game $\text{GS}_\lambda(A)$ on $\kappa$ boards simultaneously. 
The rules are that a player must make a move on each of the boards before the opponent responds by also making a move on each of the boards. ONE wins the multiboard version if there is at least one board on which the play produced on that board is a win for ONE of ${\sf GS}_{\lambda}(A)$. TWO wins the multiboard version if TWO wins each play produced on each of the boards. In this multiboard notation the classical Gale-Stewart game would be denoted ${\sf GS}_{\lambda}(A,1)$. However, for this case we shall continue to write $\text{GS}_{\lambda}(A)$ instead.

Assume that $A\subseteq\,^{\omega}\lambda$ is such that the game ${\sf GS}_{\lambda}(A)$ is undetermined\footnote{It is well known that in {\sf ZFC}  there are such examples.}. Then there are the following two cases for the multiboard version:
\begin{enumerate}
\item[(A)] 
For all $\kappa$, ${\sf GS}_{\lambda}(A,\kappa)$ is undetermined. 
\item[(B)] 
There is a cardinal $\kappa_0> 1$ such that, for all $\kappa$ with $1\le\kappa <\kappa_0$, ${\sf GS}_{\lambda}(A,\kappa)$ is undetermined, but 
${\sf GS}_{\lambda}(A,\kappa_0)$ is determined\footnote{In this case player ONE would have the winning strategy.}. In this case  ${\sf GS}_{\lambda}(A,\kappa)$ is determined for all $\kappa\ge \kappa_0$.
\end{enumerate}

We do not known of a cardinal $\lambda\ge 2$ and a set $A\subseteq\,^{\omega}\lambda$ such that ${\sf GS}_{\lambda}(A,\kappa)$ is undetermined for all $\kappa$. 
We conjecture that case (A) does not occur. More precisely, define:
\begin{quote}
${\sf D}(\lambda,\kappa)$: For each set $A\subseteq \,^{\omega}\lambda$ the game ${\sf GS}_{\lambda}(A,\kappa)$ is determined.
\end{quote}
\begin{conjecture}\label{galvin2} For each cardinal $\lambda\ge 2$ there exists a cardinal $\kappa\ge 1$ such that ${\sf D}(\lambda,\kappa)$ holds.
\end{conjecture}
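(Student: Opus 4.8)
Since the final statement is a conjecture, what I would aim to establish is its consistency relative to a large-cardinal hypothesis — the assertion of the abstract — and the route I would take is the chain of implications the paper is about to develop, fed by large cardinals at the bottom.

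The first reduction is to pass to undetermined $A$. If TWO wins ${\sf GS}_{\lambda}(A)$, then TWO wins ${\sf GS}_{\lambda}(A,\kappa)$ by running that strategy independently on each of the $\kappa$ boards, so that no board produces a play in $A$; and if ONE wins ${\sf GS}_{\lambda}(A)$, then ONE wins ${\sf GS}_{\lambda}(A,\kappa)$ by using that strategy on board $0$ and playing arbitrarily on the others, so that board $0$ witnesses a win. Hence ${\sf D}(\lambda,\kappa)$ is really only about the $A$ with ${\sf GS}_{\lambda}(A)$ undetermined, and — since there is only a set of candidate sets $A\subseteq{}^{\omega}\lambda$ — Conjecture \ref{galvin2} for a fixed $\lambda$ is equivalent to the assertion that case (A) never occurs for that $\lambda$: one then simply takes $\kappa$ at least the supremum of the thresholds $\kappa_0(A)$.

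The plan is then to run the later sections in reverse. By Theorem \ref{thm:lavertoGS} it suffices, for each $\lambda$, to produce a cardinal on which the relevant instance of Laver's game is won by the player that feeds through to ONE; by Theorem \ref{thm:ugtolaver} it suffices to win the associated Ulam-style cut-and-choose game; and by Theorem \ref{baumgth} (Baumgartner) it suffices to win the associated game on an ideal. Thus the whole conjecture reduces to exhibiting, for each $\lambda$, a cardinal $\kappa$ and an ideal $\mathcal{I}$ on $\kappa$ for which that ideal game is determined in the favorable direction. This is where large cardinals come in: given $\lambda$, I would take $\kappa$ to be a measurable cardinal (or the large cardinal of the precise type isolated in Section 5) well above $\lambda$ — a proper class of such cardinals supplies one above every $\lambda$ — and let $\mathcal{I}$ be an appropriately chosen precipitous ideal on $\kappa$ (derived from a normal measure, or a suitable restriction of a club-type ideal). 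Precipitousness delivers well-founded generic ultrapowers, and that well-foundedness is exactly the resource that lets the favored player win the ideal game; Baumgartner's theorem upgrades this to a win in the cut-and-choose game, and Theorems \ref{thm:ugtolaver} and \ref{thm:lavertoGS} upgrade it to a winning strategy for ONE in ${\sf GS}_{\lambda}(A,\kappa)$ that works simultaneously for all $A$. One fixed $\lambda$ needs only one large cardinal above it — the ``some instances'' statement of Section 5 — while all $\lambda$ at once needs the proper class.

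I expect the main obstacle to lie in matching the combinatorics at the ideal level. It is presumably not enough that $\mathcal{I}$ be precipitous in the bare sense: the translations back through Laver's and the cut-and-choose games likely demand that $\mathcal{I}$ be normal and carry extra selectivity or saturation, and one must check that the generic-ultrapower strategy is uniform in $A$, so that a single $\kappa$ serves all of ${\sf D}(\lambda,\kappa)$ rather than a different $\kappa$ for each $A$. A secondary difficulty is the bookkeeping along the chain, since each implication changes the board number and the alphabet and one has to verify the composite still yields ``$\kappa$ depending only on $\lambda$.'' Finally, showing that large cardinals are genuinely needed — that Conjecture \ref{galvin2} is not already a {\sf ZFC} theorem — would require building a model with no precipitous ideals of the requisite kind and pushing the failure back up the chain to a set $A$ witnessing case (A); I would treat that as a separate and probably harder undertaking.
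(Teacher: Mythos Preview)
Your overall architecture---reduce to the chain of implications and feed it with large cardinals---is exactly what the paper does, and your observations about determined $A$ and about taking a supremum over the $\kappa_0(A)$ are correct. But two concrete steps in your plan do not go through as stated.

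First, Theorem~\ref{baumgth} (Baumgartner) only yields ${\sf U}(\aleph_0,\kappa)$, not ${\sf U}(\lambda,\kappa)$ for general $\lambda$: the ideal game ${\sf G}(J)$ has no $\lambda$-parameter, and the proof of Theorem~\ref{baumgth} routes through ${\sf UG}(2,\kappa)$ and Theorem~\ref{thm:omega2equivalence}, which stops at $\aleph_0$. For arbitrary $\lambda$ the paper bypasses Baumgartner entirely and instead introduces the condition ${\sf I}(\lambda,\kappa)$---a $\lambda^+$-complete ideal in which every positive set splits into $\lambda$ positive pieces, together with a $\sigma$-closed dense family---and proves directly (Theorem~\ref{magidor}) that ${\sf I}(\lambda,\kappa)\Rightarrow{\sf U}(\lambda,\kappa)$. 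So the bottom of your chain needs to be replaced.

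Second, ``take $\kappa$ measurable above $\lambda$ and use an ideal derived from a normal measure'' does not produce the right object in $V$: the dual of a normal ultrafilter is \emph{prime}, hence neither atomless nor capable of splitting positive sets into $\lambda$ positive pieces, so both Baumgartner's hypothesis and ${\sf I}(\lambda,\kappa)$ fail outright. (Relatedly, precipitousness only says White does not win ${\sf G}(J)$; you need Black to win, which is strictly more---you flag this, but it is fatal, not a technicality.) The paper's fix is forcing: L\'evy collapse the measurable $\kappa$ to $\lambda^{++}$ (Theorem~\ref{thm:getilambdakappa}). In the extension the filter generated by the ground-model ultrafilter is $\lambda^+$-complete and normal but no longer an ultrafilter, every positive set now splits into $\lambda$ positive pieces, and the ground-model measure itself serves as the $\sigma$-closed dense family. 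Iterating this (Easton-style) over a proper class of measurables gives a model where ${\sf I}(\lambda,\lambda^{++})$ holds for a proper class of $\lambda$, which by Lemma~\ref{scenario1} suffices. The forcing is not a detail you can defer; without it there is no ideal to start the chain.
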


\begin{theorem}\label{BMGSconnection} Let cardinal numbers $\lambda\ge 2$ and $\kappa\ge 1$ be given. Then ${\sf D}(\lambda,\kappa)$ implies ${\sf B}(\lambda,\kappa)$.
\end{theorem}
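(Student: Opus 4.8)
The plan is to code the Banach--Mazur game ${\sf BM}(X)$ as a Gale--Stewart game played on a fixed $\pi$-base of $X$, apply ${\sf D}(\lambda,\kappa)$ to the resulting $\kappa$-board game, and analyze the two possible outcomes. First I would fix a $\pi$-base $\mathcal{B}=\{U_\xi:\xi<\lambda\}$ of $X$ consisting of nonempty open sets, re-indexed by exactly $\lambda$ (with repetitions if $\pi(X)<\lambda$). I then define a payoff set $A\subseteq\,^{\omega}\lambda$ that encodes the variant of ${\sf BM}(X)$ in which both players are required to play members of $\mathcal{B}$: a sequence $(\xi_n:n<\omega)$ belongs to $A$ precisely when either $U_{\xi_0}\supseteq U_{\xi_1}\supseteq\cdots$ and $\bigcap_{n}U_{\xi_n}=\emptyset$, or the least $p\ge 1$ with $U_{\xi_p}\not\subseteq U_{\xi_{p-1}}$ is odd (i.e., TWO is the first to break the nesting). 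Because $\mathcal{B}$ is a $\pi$-base, breaking the nesting is never forced, so a routine simulation argument shows that ONE wins ${\sf GS}_\lambda(A)$ iff White wins ${\sf BM}(X)$ and TWO wins ${\sf GS}_\lambda(A)$ iff Black wins ${\sf BM}(X)$; moreover a winning strategy for TWO in ${\sf GS}_\lambda(A,\kappa)$ yields one in ${\sf GS}_\lambda(A,1)={\sf GS}_\lambda(A)$, since TWO can pad ONE's single-board moves to $\kappa$-board moves (using $0$ on the extra boards), apply the $\kappa$-board strategy, and return its answer on board $0$.

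By ${\sf D}(\lambda,\kappa)$ the game ${\sf GS}_\lambda(A,\kappa)$ is determined. If TWO wins it, the previous paragraph gives that Black wins ${\sf BM}(X)$, which is the conclusion of ${\sf B}(\lambda,\kappa)$. So the heart of the argument is to rule out that ONE wins ${\sf GS}_\lambda(A,\kappa)$, and this is exactly where the hypothesis that $X^{\kappa}$ is Baire is used: by the Banach--Mazur--Oxtoby theorem it suffices to manufacture, from a winning strategy $\Theta$ for ONE in ${\sf GS}_\lambda(A,\kappa)$, a winning strategy for White in ${\sf BM}(X^{\kappa})$ (box topology), contradicting Baireness of $X^{\kappa}$. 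The natural product $\pi$-base of $X^{\kappa}$ is $\{\prod_{\gamma<\kappa}U_{\eta(\gamma)}:\eta\in\,^{\kappa}\lambda\}$, so White plays boxes whose coordinates are governed by $\Theta$: starting from the box given by $\Theta$'s first move, White responds to each open move of TWO in ${\sf BM}(X^{\kappa})$ by shrinking it to such a box, reading off the corresponding $\eta\in\,^{\kappa}\lambda$, feeding $\eta$ to $\Theta$ as TWO's move on the $\kappa$ boards, and playing the box determined by $\Theta$'s response. If at the end $\Theta$ has won the simulated $\kappa$-board play on some board $\alpha^{*}$, then on that board the run is legal with empty intersection, the $\alpha^{*}$-th coordinates of White's successive boxes interleave that run, and hence the intersection of White's boxes is empty.

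The main obstacle is that $\Theta$, being only a winning strategy for ONE in the multiboard game, need not produce \emph{nested} boxes: ONE is permitted to break the nesting on a board (forfeiting it) provided ONE still wins on some other board, whereas White's moves in ${\sf BM}(X^{\kappa})$ must be genuinely decreasing. I would deal with this by maintaining a shrinking set of ``active'' boards and ``freezing'' a board the moment $\Theta$ breaks the nesting on it, letting White's box on a frozen coordinate simply follow a $\pi$-base shrinking of TWO's move there rather than $\Theta$'s move; with this device White's boxes stay nested and nonempty. The crucial observation is that $\Theta$'s actual winning board $\alpha^{*}$ is never frozen: on any frozen board the first violation of nesting was committed by ONE, that is, at an even-indexed move, so by the definition of $A$ that board is a loss for ONE; hence $\alpha^{*}$ remains active throughout, its coordinate in White's boxes does follow $\Theta$, and the displayed argument applies. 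Once this is established, ONE cannot win ${\sf GS}_\lambda(A,\kappa)$; by determinacy TWO does, and therefore Black has a winning strategy in ${\sf BM}(X)$, as required.
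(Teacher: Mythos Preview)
Your approach is essentially the paper's: encode ${\sf BM}(X)$ as a Gale--Stewart game on a fixed $\pi$-base, apply ${\sf D}(\lambda,\kappa)$, and show that a winning strategy for ONE in ${\sf GS}_\lambda(A,\kappa)$ would yield a winning strategy for White in ${\sf BM}(X^\kappa)$, contradicting Baireness of $X^\kappa$. Your payoff set (``first to break the nesting loses'') differs cosmetically from the paper's conjunction $(\forall n)(B_{2n+2}\subseteq B_{2n+1})\wedge\bigl(\bigcap_n B_n=\emptyset\ \text{or}\ (\exists n)(B_{2n+1}\not\subseteq B_{2n})\bigr)$, but the two are interchangeable for this argument.

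The one substantive difference is your freezing device. The paper simply sets White's move equal to $\prod_{\alpha<\kappa}\sigma(f_1,\dots,f_{2n-1})(\alpha)$ and treats this as a legal Banach--Mazur move, leaving unsaid why ONE's strategy may be assumed to nest on \emph{every} board (ONE only needs to win on one board, so in principle $\sigma$ could play non-nesting garbage elsewhere). Your freezing mechanism makes this rigorous: you modify White's box only on boards where $\Theta$ has already broken the nesting, observe that on such a board the least violation index is even so that board is a loss for ONE, and conclude that the eventual winning board $\alpha^*$ was never frozen and hence really does witness $\bigcap_n B_n=\emptyset$. This is exactly the argument one needs to justify the paper's implicit ``without loss of generality $\sigma$ nests everywhere''; in that sense your write-up is a bit more careful than the paper's, but the underlying idea is identical.
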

\begin{proof}
Let a topological space $X$ of $\pi$-weight $\lambda$ be given, and assume that $\mathcal{B}$ is a $\pi$-base of cardinality $\lambda$ for $X$.
Also assume that in the box product topology $X^{\kappa}$ is a Baire space. We must show that Black has a winning strategy in ${\sf BM}(X)$. 

Identify $\mathcal{B}$ with $\lambda$, and define a subset $A$ of $^{\omega}\mathcal{B}$ to be the set of sequences $(B_n:n<\omega)$ such that:
\begin{enumerate}
\item{$(\forall n)(B_{2n+2}\subseteq B_{2n+1})$;}
\item{$\bigcap_{n<\omega}B_{n} =\emptyset$,  or $(\exists n<\omega)(B_{{2n+1}}\not\subseteq B_{{2n}})$.}
\end{enumerate}
By the hypothesis ${\sf D}(\lambda,\kappa)$, the game ${\sf GS}_{\lambda}(A,\kappa)$ is determined. 

{\flushleft{Claim:}} Player ONE does not have a winning strategy in the game ${\sf GS}_{\lambda}(A,\kappa)$. 

For let $\sigma$ be a strategy for ONE.  Define a corresponding strategy $F$ for White in ${\sf BM}(X^{\kappa})$ as follows:

In inning $0$, $\sigma$ calls on ONE to play an $f_0 = \sigma(\emptyset)$ where $f_0$ is a function $f_0:\kappa\rightarrow\mathcal{B}$. This defines the open subset 
 $B_0 = F\emptyset) := \prod_{\alpha<\kappa} \sigma(\emptyset)(\alpha)$
of $X^{\kappa}$. 

We may assume the players of ${\sf BM}(X^{\kappa})$ play elements of the $\pi$-base obtained by taking products of $\kappa$ elements chosen from the $\pi$-base of $X$.
Now Black responds in ${\sf BM}(X^{\kappa})$ with such an element $B_1\subseteq B_0$. $B_1$ can be described as $B_1:=\prod_{\alpha<\kappa}f_1(\alpha)$, where $f_1\in \,^{\kappa}\mathcal{B}$.

Considering $f_1$ as TWO's response in ${\sf GS}_{\lambda}(A,\kappa)$, apply ONE's strategy there to find $f_2 = \sigma(f_1) \in\,^{\kappa}\mathcal{B}$, and then define White's move in the game ${\sf BM}(X^{\kappa})$ as $B_2 =  F(B_1):=\prod_{\alpha<\kappa}\sigma(f_1)(\alpha)$, and so on, as depicted in the following diagram:

\begin{center}
\begin{figure}[h]
\begin{tabular}{c|l}
\multicolumn{2}{c}{${\sf GS}_{\lambda}(A,\kappa)$}\\ \hline\hline
\multicolumn{2}{c}{}\\
ONE              & TWO                                     \\ \hline
$\sigma(\emptyset)$   &                                       \\
                 & $f_1$ \\
$\sigma(f_1)$   &                                       \\
                 & $f_3 $ \\
                 &                                         \\
$\vdots$         & $\vdots$ \\
\end{tabular}
\hspace{0.1in}
\begin{tabular}{c|l}
\multicolumn{2}{c}{${\sf BM}(X^{\kappa})$}                        \\ \hline\hline
\multicolumn{2}{c}{}\\ 
White   & Black                                                 \\ \hline
$F(\emptyset) = \prod_{\alpha<\kappa}\sigma(\emptyset)(\alpha)$ & \\
                                                                                                        & $B_1 = \prod_{\alpha<\kappa}f_1(\alpha)$                                      \\
$F(B_1) = \prod_{\alpha<\kappa}\sigma(f_1)(\alpha)$      &                                                     \\
      & $B_3=\prod_{\alpha<\kappa}f_3(\alpha)$                                    \\
      &                                                     \\
$\vdots$         & $\vdots$ \\
\end{tabular}
\caption{Playing a Banach-Mazur game using a multiboard Gale-Stewart game}\label{fig: BMGS}
\end{figure}
\end{center}

Since $X^{\kappa}$ is a Baire space, White has no winning strategy in ${\sf BM}(X^{\kappa})$. But then consider an $F$-play of ${\sf BM}(X^{\kappa})$ lost by White:
\[
  F(X^{\kappa}),\, B_1,\, F(B_1),\, B_3,\, \cdots, F(B_1,\cdots,B_{2n-1}),\, B_{2n+1},\, \cdots
\]
We have for each $n$ that
\begin{itemize}
\item{$B_{2n+1} = \prod_{\alpha<\kappa}f_{2n+1}(\alpha)$ where $f_{2n+1}$ is a function from $\kappa$ to $\mathcal{B}$, and }
\item{$F(B_1,\cdots,,\,B_{2n-1}) = \prod_{\alpha<\kappa}\sigma(f_1,\cdots,\,f_{2n-1})(\alpha)$.}
\end{itemize}
Since Black wins this play of ${\sf BM}(X^{\kappa})$ it follows that $\bigcap_{n<\omega}B_n \neq \emptyset$. This in turn implies that for each $\alpha<\kappa$ we have that $\bigcap_{n<\omega}f_{2n+1}(\alpha) \neq \emptyset$. But then the $\sigma$-play
\[
 \sigma(\emptyset),\, f_1,\, \sigma(f_1),\, f_3,\, \sigma(f_1,f_3),\, \cdots
\]
of ${\sf GS}_{\lambda}(A,\kappa)$ is lost by ONE, completing the proof of the claim.

Thus, ONE has no winning strategy in the game ${\sf GS}_{\lambda}(A,\kappa)$. By ${\sf D}(\lambda,\kappa)$ TWO has a winning strategy in 
${\sf GS}_{\lambda}(A,\kappa)$, and thus also has a winning strategy, say $\tau$, in ${\sf GS}_{\lambda}(A)$. Now use the winning strategy $\tau$ of TWO to define a winning strategy $F$ for Black in the Banach-Mazur game on $X$:

When White plays  $B_{0}$ in ${\sf BM}(X)$, Black responds with $F(B_{0}):= \tau(B_{0})$. Upon White's next move $B_{2}$ Black responds with $F(B_{0},\, B_{2}):= \tau(B_{0},\, B_{2})$, and so on. As $\tau$ is a winning strategy for TWO in ${\sf GS}_{\lambda}(A)$ it follows that $\bigcap_{n<\omega}B_{n}$ is nonempty, and thus $F$ is a winning strategy for Black in ${\sf BM}(X)$.
\end{proof}

\section{Laver games}

We now introduce a class of games invented by Richard Laver. 
For cardinals $\lambda \ge 2$ and $\kappa \ge 1$, the Laver game ${\sf LG}(\lambda, \kappa)$ is as follows: Player ONE chooses $f_0: \kappa \rightarrow \lambda$, then player TWO chooses $f_1: \kappa \rightarrow \lambda$, then player ONE chooses $f_2: \kappa \rightarrow \lambda$, and so on. 
ONE wins the play $(f_0,\, f_1,\, f_2,\, \cdots)$ of ${\sf LG}(\lambda,\kappa)$ if for each $S\subseteq\,^{\omega}\lambda$ such that TWO does not have a winning strategy in ${\sf GS}_{\lambda}(S)$, 
there exists $\xi \in \kappa$ such that $(f_0(\xi), f_1(\xi), f_2(\xi), . . .)\in S$.
Otherwise, TWO wins.

\begin{lemma}\label{lem:LGequivalents}
Let cardinals $\lambda\ge 2$ and $\kappa\ge 1$, and a sequence $(f_n:n<\omega)$ of elements of $^{\kappa}\lambda$ be given. Define $Y = \{(f_n(\xi):n<\omega): \xi\in\kappa\}$. The following are equivalent:
\begin{enumerate}
\item[(a)] For each $S\subseteq\,^{\omega}\lambda$ such that TWO does not have a winning strategy in ${\sf GS}_{\lambda}(S)$, 
there exists $\xi \in \kappa$ such that $(f_0(\xi), f_1(\xi), f_2(\xi), . . .)\in S$;
\item[(b)] TWO has a winning strategy in 
${\sf GS}_{\lambda}(^{\omega}\lambda \setminus Y)$; 
\item[(c)] There is strategy $\sigma$ for player TWO in the game structure (meaning that the winning condition is unspecified) ${\sf GS}_{\lambda}$ such that every possible $\sigma$-play of ${\sf GS}_{\lambda}$ is in $Y$.
\end{enumerate}
\end{lemma}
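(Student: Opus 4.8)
The plan is to establish the cycle (a) $\Rightarrow$ (b) $\Leftrightarrow$ (c) $\Rightarrow$ (a). The whole argument is really bookkeeping around one basic fact about the Gale--Stewart game structure ${\sf GS}_{\lambda}$: a strategy $\sigma$ for TWO is winning in ${\sf GS}_{\lambda}(A)$ exactly when every $\sigma$-play $p$ satisfies $p\notin A$ (since ONE wins a play iff it lands in $A$); dually, $\sigma$ fails to be winning for TWO iff some $\sigma$-play lands in $A$. I will also use repeatedly the trivial observation that, by the definition of $Y$, for every $\xi\in\kappa$ the play $(f_n(\xi):n<\omega)$ belongs to $Y$.

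The equivalence (b) $\Leftrightarrow$ (c) is immediate from the definitions once one notes that the strategies referred to in both clauses live in the same object, the game structure ${\sf GS}_{\lambda}$ with payoff set unspecified: a strategy $\sigma$ for TWO is winning in ${\sf GS}_{\lambda}(\,^{\omega}\lambda\setminus Y)$ iff every $\sigma$-play $p$ avoids $\,^{\omega}\lambda\setminus Y$, i.e.\ $p\in Y$, which is exactly clause (c). For (a) $\Rightarrow$ (b) I will apply (a) to the particular set $S:=\,^{\omega}\lambda\setminus Y$: if TWO had no winning strategy in ${\sf GS}_{\lambda}(S)$, then (a) would produce $\xi\in\kappa$ with $(f_0(\xi),f_1(\xi),\dots)\in\,^{\omega}\lambda\setminus Y$, contradicting $(f_n(\xi):n<\omega)\in Y$; hence TWO has a winning strategy in ${\sf GS}_{\lambda}(\,^{\omega}\lambda\setminus Y)$, which is (b).

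For (c) $\Rightarrow$ (a), fix a strategy $\sigma$ for TWO all of whose plays lie in $Y$, and let $S\subseteq\,^{\omega}\lambda$ be any set for which TWO has no winning strategy in ${\sf GS}_{\lambda}(S)$. In particular $\sigma$ is not winning for TWO in ${\sf GS}_{\lambda}(S)$, so by the basic fact there is a $\sigma$-play $p$ with $p\in S$. Since $p$ is a $\sigma$-play it lies in $Y$, so $p=(f_n(\xi):n<\omega)$ for some $\xi\in\kappa$, and this $\xi$ is exactly what (a) demands.

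None of the three implications carries any real combinatorial or set-theoretic weight; the only point needing care — and the closest thing to an obstacle — is to make explicit at the outset that in (b) and (c) the strategy $\sigma$ is a strategy in the common game structure ${\sf GS}_{\lambda}$, so that a single $\sigma$ may legitimately be evaluated against the various payoff sets $S$ and against $\,^{\omega}\lambda\setminus Y$. Once that identification is in place, the proof is the few lines sketched above.
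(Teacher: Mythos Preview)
Your proof is correct and follows essentially the same route as the paper: the paper proves the cycle $(a)\Rightarrow(b)\Rightarrow(c)\Rightarrow(a)$ using exactly the same ideas --- applying (a) to $S=\,^{\omega}\lambda\setminus Y$ for the first implication, observing that a winning strategy for TWO in ${\sf GS}_{\lambda}(\,^{\omega}\lambda\setminus Y)$ is precisely a strategy all of whose plays lie in $Y$, and for $(c)\Rightarrow(a)$ noting that $\sigma$ cannot be winning in ${\sf GS}_{\lambda}(S)$ so some $\sigma$-play lands in $S\cap Y$. The only cosmetic difference is that you package $(b)\Leftrightarrow(c)$ as a single equivalence rather than one direction of the cycle.
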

\begin{proof}
$(a)\Rightarrow(b):$ Assume (a). Put $S = \,^{\omega}\lambda\setminus Y$. Suppose that contrary to (b), TWO does not have a winning strategy in ${\sf GS}_{\lambda}(S)$.  
By (a) it follows that $Y\cap S$ is nonempty, a contradiction.\\
$(b)\Rightarrow(c):$ Assume (b) and let $\sigma$ be a winning strategy for TWO in 
${\sf GS}_{\lambda}(\,^{\omega}\lambda\setminus Y)$.
Then $\sigma$ is a strategy as in (c) for TWO.\\
$(c)\Rightarrow(a):$ Let $\sigma$ be a strategy for TWO in the game structure ${\sf GS}_{\lambda}$ as in (c). Consider any set $S\subseteq \,^{\omega}\lambda$ for which TWO does not have a winning strategy in ${\sf GS}_{\lambda}(S)$. 
Now let TWO use the strategy $\sigma$ to play this game. Since $\sigma$ is not a winning strategy for TWO, there is a play against $\sigma$ that is won by ONE, say $(\nu_0,\, \nu_1,\, \cdots,\,\nu_{2n},\, \nu_{2n+1},\cdots)$. Since ONE wins this play, $(\nu_0,\, \nu_1,\, \cdots,\,\nu_{2n},\, \nu_{2n+1},\cdots) \in S$. Since this play is a $\sigma$-play, it is according to (c) a member of $Y$, and thus of the form $(f_n(\xi):n<\omega)$ for some $\xi\in\kappa$.
\end{proof}

Let ${\sf L}(\lambda,\kappa)$ denote the statement: ONE has a winning strategy in ${\sf LG}(\lambda,\kappa)$.

\begin{theorem}\label{thm:lavertoGS} For cardinal numbers $\lambda$ and $\kappa$, 
${\sf L}(\lambda,\kappa)$ implies ${\sf D}(\lambda,\kappa)$. 
\end{theorem}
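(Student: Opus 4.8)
The plan is to fix an arbitrary $A\subseteq {}^{\omega}\lambda$ and prove that ${\sf GS}_{\lambda}(A,\kappa)$ is determined, splitting into two cases according to whether TWO has a winning strategy in the one-board game ${\sf GS}_{\lambda}(A)$. Since $A$ is arbitrary, this yields ${\sf D}(\lambda,\kappa)$.

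Case 1: TWO has a winning strategy $\tau$ in ${\sf GS}_{\lambda}(A)$. I claim TWO wins ${\sf GS}_{\lambda}(A,\kappa)$ by running $\tau$ independently on each of the $\kappa$ boards. Concretely, in inning $n$, after ONE has played $f_{2n}\in{}^{\kappa}\lambda$, TWO responds with the function $f_{2n+1}\in{}^{\kappa}\lambda$ given by $f_{2n+1}(\alpha)=\tau\bigl(f_{0}(\alpha),\dots,f_{2n}(\alpha)\bigr)$ for every $\alpha<\kappa$. This is a legal strategy precisely because in the multiboard game ONE completes all of inning $n$ before TWO moves, so TWO has already seen $f_{2n}(\alpha)$ when computing its move on board $\alpha$. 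The play produced on each board $\alpha$ is then a $\tau$-play of ${\sf GS}_{\lambda}(A)$, hence won by TWO there, i.e. $(f_{n}(\alpha):n<\omega)\notin A$; as this holds for every $\alpha<\kappa$, ONE loses the play of ${\sf GS}_{\lambda}(A,\kappa)$. So the multiboard game is determined (TWO wins) in this case.

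Case 2: TWO has no winning strategy in ${\sf GS}_{\lambda}(A)$. Here I invoke the hypothesis ${\sf L}(\lambda,\kappa)$: let $\Sigma$ be a winning strategy for ONE in ${\sf LG}(\lambda,\kappa)$. The two games ${\sf LG}(\lambda,\kappa)$ and ${\sf GS}_{\lambda}(A,\kappa)$ have literally the same move structure — the players alternate elements of ${}^{\kappa}\lambda$ beginning with ONE — and differ only in the payoff set, so $\Sigma$ is also a strategy for ONE in ${\sf GS}_{\lambda}(A,\kappa)$. I claim it is winning there. Let $(f_{n}:n<\omega)$ be any play of ${\sf GS}_{\lambda}(A,\kappa)$ in which ONE follows $\Sigma$; this same sequence is a $\Sigma$-play of ${\sf LG}(\lambda,\kappa)$, hence won by ONE in the Laver game. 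Applying the Laver winning condition with the specific set $S=A$, which by the case assumption is such that TWO has no winning strategy in ${\sf GS}_{\lambda}(S)$, we obtain some $\xi<\kappa$ with $\bigl(f_{0}(\xi),f_{1}(\xi),f_{2}(\xi),\dots\bigr)\in A$ — which is exactly the condition for ONE to win that play of ${\sf GS}_{\lambda}(A,\kappa)$. Hence ONE has a winning strategy, and the multiboard game is determined (ONE wins).

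I do not anticipate a genuine obstacle: the payoff set of the Laver game is evidently engineered so that a winning strategy for ONE transfers directly to the multiboard Gale–Stewart game once TWO has no one-board winning strategy. The only points requiring care are the bookkeeping in Case 1 (that the "ONE moves on all boards, then TWO moves on all boards" convention really does permit TWO to run $\tau$ board by board) and, in Case 2, the observation that the Laver game and the $\kappa$-board game are the same game tree, so that a strategy for one is automatically a strategy for the other; alternatively the Case 2 argument can be rephrased through Lemma \ref{lem:LGequivalents}, but the direct route above is self-contained.
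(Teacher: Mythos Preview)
Your proof is correct and follows essentially the same route as the paper: split on whether TWO wins the one-board game ${\sf GS}_{\lambda}(A)$, and in the nontrivial case transfer ONE's Laver strategy directly to ${\sf GS}_{\lambda}(A,\kappa)$ via the identical game tree. The only difference is that where the paper writes ``there is nothing to prove'' in Case~1, you spell out the board-by-board use of $\tau$; this is a harmless elaboration, not a different argument.
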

\begin{proof}
Let $\sigma$ be a winning strategy for ONE in 
${\sf LG}(\lambda,\kappa)$. Let $S\subseteq\,^{\omega}\lambda$ be given. We must show that ${\sf GS}_{\lambda}(S,\kappa)$ is determined. If TWO has a winning strategy in ${\sf GS}_{\lambda}(S)$ 
there is nothing to prove. Thus, assume that TWO does not have a winning strategy in ${\sf GS}_{\lambda}(S)$.

Now ONE of ${\sf GS}_{\lambda}(S,\kappa)$ uses the strategy $\sigma$ of ONE of ${\sf LG}(\lambda,\kappa)$ to play ${\sf GS}_{\lambda}(S,\kappa)$. Let $(f_0,f_1,\cdots,f_{2n},f_{2n+1},\cdots)$ be a $\sigma$-play of ${\sf GS}_{\lambda}(S,\kappa)$. Since this play is a winning play for ONE of  
${\sf LG}(\lambda,\kappa)$, and since TWO does not have a winning strategy in $\textsf{GS}_{\lambda}(S)$,  
there exists $\xi\in\kappa$ for which $(f_n(\xi):n<\omega)$ is in $S$. But then this $\sigma$-play is also a winning play for ONE in the game ${\sf GS}_{\lambda}(S,\kappa)$.
\end{proof}

We leave the proof of the following partial converse to Theorem \ref{thm:lavertoGS} to the reader:

\begin{theorem}\label{thm:GStolaver} If ${\sf D}(\lambda, \kappa)$, then ${\sf L}(\lambda, 2^{\lambda^{\aleph_0}} \cdot \kappa)$.
\end{theorem}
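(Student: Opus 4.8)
The plan is to verify ${\sf L}(\lambda,\mu)$ directly, where $\mu := 2^{\lambda^{\aleph_0}}\cdot\kappa$, by dedicating a block of $\kappa$ of the $\mu$ boards to each potentially relevant target set $S$, and on that block having ONE follow a winning strategy of ${\sf GS}_\lambda(S,\kappa)$ supplied by ${\sf D}(\lambda,\kappa)$.

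First I would isolate the relevant sets: let $\mathcal S$ be the collection of all $S\subseteq{}^\omega\lambda$ such that TWO has no winning strategy in ${\sf GS}_\lambda(S)$. Then $|\mathcal S|\le|\mathcal P({}^\omega\lambda)|=2^{\lambda^{\aleph_0}}$. Next I would check that for each $S\in\mathcal S$, TWO also has no winning strategy in the $\kappa$-board game ${\sf GS}_\lambda(S,\kappa)$: if $\tau$ were such a strategy, then letting ONE of ${\sf GS}_\lambda(S)$ simulate a play of ${\sf GS}_\lambda(S,\kappa)$ in which ONE always plays constant functions, having TWO reply by $\tau$, and reading everything off the $0$th coordinate, would give a winning strategy for TWO in ${\sf GS}_\lambda(S)$, contradicting $S\in\mathcal S$. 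Hence, by ${\sf D}(\lambda,\kappa)$, ${\sf GS}_\lambda(S,\kappa)$ is determined, so ONE has a winning strategy there; fix one and call it $\sigma_S$.

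Now write $\mu$ as a disjoint union of blocks $(C_i:i<2^{\lambda^{\aleph_0}})$ with $|C_i|=\kappa$ for each $i$ (possible since $2^{\lambda^{\aleph_0}}\cdot\kappa=\mu$), and fix for each $i$ a bijection of $C_i$ with $\kappa$. If $\mathcal S=\emptyset$ then ONE wins every play of ${\sf LG}(\lambda,\mu)$ vacuously and we are done; otherwise fix a surjection $i\mapsto S_i$ of $2^{\lambda^{\aleph_0}}$ onto $\mathcal S$. ONE's strategy in ${\sf LG}(\lambda,\mu)$ is: at each turn, and for each $i$ separately, view TWO's moves so far on the board-block $C_i$ (transported via the chosen bijection) as TWO's moves in ${\sf GS}_\lambda(S_i,\kappa)$, and let ONE's next move on $C_i$ be the response dictated by $\sigma_{S_i}$. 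To see this is winning, let $(f_n:n<\omega)$ be any play consistent with it and let $S\in\mathcal S$ be arbitrary; choose $i$ with $S_i=S$. The part of the play lying on $C_i$ is, by construction, a $\sigma_{S_i}$-play of ${\sf GS}_\lambda(S,\kappa)$, hence a win for ONE of that game, so there is $\xi\in C_i$ with $(f_n(\xi):n<\omega)\in S$. As $S\in\mathcal S$ was arbitrary, ONE wins this play of ${\sf LG}(\lambda,\mu)$, so the described strategy is a winning strategy for ONE, i.e.\ ${\sf L}(\lambda,\mu)$ holds.

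I do not anticipate a real obstacle here; the only steps requiring any thought are the reduction from "TWO has no winning strategy in ${\sf GS}_\lambda(S)$" to "TWO has no winning strategy in ${\sf GS}_\lambda(S,\kappa)$" (so that ${\sf D}(\lambda,\kappa)$ yields a strategy for ONE, which is what we can exploit), and the elementary cardinal bookkeeping $|\mathcal S|\le 2^{\lambda^{\aleph_0}}$ and $2^{\lambda^{\aleph_0}}\cdot\kappa=\mu$ needed to fit all the blocks inside $\mu$ boards.
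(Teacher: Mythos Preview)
The paper does not give a proof of this theorem (it explicitly leaves it to the reader), and your argument is correct and is the natural one the authors presumably had in mind: enumerate the at most $2^{\lambda^{\aleph_0}}$ sets $S$ for which TWO lacks a winning strategy in ${\sf GS}_\lambda(S)$, dedicate a $\kappa$-block of boards to each, and on that block have ONE follow a winning strategy for ${\sf GS}_\lambda(S,\kappa)$ supplied by ${\sf D}(\lambda,\kappa)$. Your reduction showing that TWO cannot win ${\sf GS}_\lambda(S,\kappa)$ whenever TWO cannot win ${\sf GS}_\lambda(S)$ (so that determinacy hands the win to ONE) is exactly the point that needs checking, and your constant-function simulation does it.
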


Thus, when $\kappa\ge 2^{\lambda^{\aleph_0}}$, the games ${\sf D}(\lambda,\kappa)$ and ${\sf L}(\lambda,\kappa)$ are equivalent. This suggests that ${\sf L}(\lambda, \kappa)$ is not much (if at all) stronger than ${\sf D}(\lambda, \kappa)$. 

\begin{theorem}\label{thm:TWOlaver}  TWO has a winning strategy in ${\sf LG}(2,2^{\aleph_0})$.
\end{theorem}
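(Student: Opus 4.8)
The plan is to reduce, via Lemma~\ref{lem:LGequivalents}, TWO's task to a purely combinatorial statement about strategies in ${\sf GS}_2$, and then to write down an explicit winning strategy for TWO. Throughout, for a strategy $\sigma$ for TWO in ${\sf GS}_2$ and $a\in{}^{\omega}2$, let $\sigma\ast a\in{}^{\omega}2$ be the sequence of TWO's replies when ONE plays the bits of $a$, so that the resulting play of ${\sf GS}_2$ is the interleaving of $a$ and $\sigma\ast a$; put $Q_\sigma=\{\sigma\ast a:a\in{}^{\omega}2\}$. If $(f_n:n<\omega)$ is a play of ${\sf LG}(2,\kappa)$ and $Y$ is as in Lemma~\ref{lem:LGequivalents}, then by clause (c) of that lemma ONE wins precisely when some strategy $\sigma$ for TWO in ${\sf GS}_2$ has all of its plays inside $Y$; hence TWO wins iff no such $\sigma$ exists. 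The arithmetic fact to exploit is that there are exactly $2^{\aleph_0}$ strategies for TWO in ${\sf GS}_2$ — a strategy being a function from the countable set of nonempty finite binary sequences to $2$ — which matches the number $\kappa=2^{\aleph_0}$ of boards.

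First I would build, by a Bernstein-style transfinite recursion of length $2^{\aleph_0}$, a set $R\subseteq{}^{\omega}2$ such that $|R|=2^{\aleph_0}$, $R\neq{}^{\omega}2$, and $Q_\sigma\not\subseteq R$ for every strategy $\sigma$ for TWO in ${\sf GS}_2$ with $a\mapsto\sigma\ast a$ injective. This is possible in {\sf ZFC}: for such a $\sigma$ the map $a\mapsto\sigma\ast a$ is an injective continuous map on the compact space ${}^{\omega}2$, so $Q_\sigma$ is a perfect set of size $2^{\aleph_0}$; there are at most $2^{\aleph_0}$ of these sets, so one enumerates them (together with enough extra points) and at each stage $\alpha<2^{\aleph_0}$ throws one new point of the current $Q_\sigma$ into ${}^{\omega}2\setminus R$ and reserves one new point for $R$, which is possible since the already-committed sets have size $<2^{\aleph_0}$ while $Q_\sigma$ has size $2^{\aleph_0}$. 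TWO's strategy in ${\sf LG}(2,2^{\aleph_0})$ is then: fix a bijection of the set of boards onto $R$, and on the board corresponding to $r\in R$ play, at inning $n$, the bit $r(n)$, ignoring ONE's moves. Consequently, along any play following this strategy, writing $a_r\in{}^{\omega}2$ for the sequence of ONE's moves on board $r$, the set $Y$ is exactly $\{\,\text{the interleaving of }a_r\text{ with }r\ :\ r\in R\,\}$.

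Finally, suppose for contradiction that TWO follows this strategy but some strategy $\sigma$ for TWO in ${\sf GS}_2$ has every play in $Y$. For each $a\in{}^{\omega}2$, the play that interleaves $a$ with $\sigma\ast a$ belongs to $Y$, so there is $r\in R$ with $a_r=a$ and $r=\sigma\ast a$. Reading off the odd coordinates gives $\sigma\ast a\in R$, so (letting $a$ vary) $Q_\sigma\subseteq R$; reading off the even coordinates gives $a=a_{\sigma\ast a}$, so $\sigma\ast a=\sigma\ast a'$ forces $a=a_{\sigma\ast a}=a_{\sigma\ast a'}=a'$, i.e.\ $a\mapsto\sigma\ast a$ is injective. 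But a strategy with injective $\sigma\ast(\cdot)$ and $Q_\sigma\subseteq R$ was excluded in the construction of $R$, a contradiction. Hence no such $\sigma$ exists, so by Lemma~\ref{lem:LGequivalents} ONE never wins a play consistent with TWO's strategy, and TWO's strategy is winning. I expect the one genuine difficulty to be the construction of $R$: one must first notice that, against the naive ``fixed sequence per board'' strategy, the only $\sigma$'s ONE could hope to realize are those with injective reply map (a non-injective one cannot be realized, since the single sequence $r=\sigma\ast a$ on its board would have to coincide with ONE's plays for two different $a$'s), and then observe that for these the obstruction sets $Q_\sigma$ are perfect, so a Bernstein set does the job; everything else is routine.
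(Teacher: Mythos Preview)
Your proof is correct and follows essentially the same route as the paper: TWO fixes a continuum-sized set of labels avoiding the relevant perfect sets, plays on each board the bits of its label, and then a hypothetical $\sigma$ yields an injective continuous map of ${}^{\omega}2$ into the label set, a contradiction. The only difference is packaging: the paper simply takes the board set to be any totally imperfect subset of ${}^{\omega}2$ of size $2^{\aleph_0}$ and observes that $t\mapsto x_t$ (your $a\mapsto\sigma\ast a$) is one-to-one and continuous, whereas you carry out the Bernstein diagonalization explicitly against the specific perfect sets $Q_\sigma$; since each $Q_\sigma$ with injective reply map is indeed perfect, the two arguments coincide.
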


\begin{proof} Let the set of ``boards" be some totally imperfect subset $X$ of the Cantor space $\ ^\omega\{0,1\}$ with $|X|=2^{\aleph_0}$; thus, at move $n$, ONE chooses a function $f_n:X\to\{0,1\}$, and then TWO chooses a function $g_n:X\to\{0,1\}$. TWO's winning strategy $\pi$ is very simple: at move $n$ he chooses the function $g_n(x)=x(n)$. 
Assume towards a contradiction that ONE wins some $\pi$-play $(f_0,g_0,f_1,g_1,\dots)$ of ${\sf LG}(2,2^{\aleph_0})$, and define
$$Y=\{(f_0(x),g_0(x),f_1(x),g_1(x),\dots):x\in X\}.$$
Then, by the implication (a) $\Rightarrow$ (c) of Lemma 5, there is a strategy $\sigma$ for TWO in ${\sf GS}_2$ such that every $\sigma$-play of 
${\sf GS}_2$ is in $Y$.

For each $t\in\ ^\omega\{0,1\}$ let $y_t\in Y$ be the $\sigma$-play of ${\sf GS}_2$ in which $t_n$ is the sequence of ONE's moves, and let $x_t\in X$ be such that
\begin{center}
\begin{tabular}{lcl}
$y_t$ & = & $(f_0(x_t),g_0(x_t),f_1(x_t),g_1(x_t),\dots)$ \\
          & = & $(t(0),x_t(0),t(1),x_t(1),\dots)$ \\
          & = & $(t(0),\sigma(t(0)),t(1),\sigma(t(0),t(1)),\dots).$\\
 \end{tabular}
\end{center}
Then the function $t\mapsto x_t$ is a one-to-one continuous mapping of $\ ^\omega\{0,1\}$ into $X$; but this is impossible because $X$ is totally imperfect.
\end{proof}

\section{Cut-and-choose games and the conjectures}

Again, let $\lambda\ge 2$ and $\kappa\ge 1$ be cardinals. Consider the following
cut-and-choose game ${\sf UG}(\lambda,\kappa)$  
between players White and Black\footnote{The game ${\sf UG}(2,\kappa)$ was invented by Ulam: See pp. 346-347 of \cite{ulam}.}:

First, White chooses a partition of $\kappa$ into $\lambda$ pieces, and
then Black chooses one of these, $S_0$. Then Black chooses a partition of
$S_0$ into $\lambda$ pieces, and White chooses one of these, $S_1$. Then White
chooses a partition of $S_1$ into $\lambda$ pieces, and Black chooses one
of these, say $S_2$, and so on for $\omega$ innings. White wins if $\bigcap_{n<\omega}S_n\neq \emptyset$. Else, Black wins. 

Alternately this game can be described as follows: White chooses a function $f_0 \in\,^{\kappa}\lambda$. Then Black chooses a $\xi_0\in\lambda$. Then Black chooses a $f_1\in\, ^{\kappa}\lambda$, and then White chooses a $\xi_1\in\lambda$. Then White chooses an $f_2 \in\,^{\kappa}\lambda$. Then Black chooses a $\xi_3\in\lambda$, and an $f_3\in\,^{\kappa}\lambda$, and so on. A play
\[
  f_0,\, (\xi_0,f_1),\,  (\xi_1,f_2),\, (\xi_2, f_3),\, \cdots
\]
is won by White if there is an $\alpha<\kappa$ such that for each $n$, $f_n(\alpha) = \xi_n$.
We define:
\begin{quote}
${\sf U}(\lambda,\kappa)$: White has a winning strategy in ${\sf UG}(\lambda,\kappa)$.
\end{quote}
The following observation will be useful later:
\begin{lemma}\label{monotone} If $\lambda, \, \lambda^{\prime},\, \kappa$ and $\kappa^{\prime}$ are cardinal numbers such that  $1\le \lambda^{\prime}\le \lambda$ and $\kappa^{\prime}\ge \kappa\ge 1$, then ${\sf U}(\lambda,\kappa)$ implies ${\sf U}(\lambda^{\prime},\kappa^{\prime})$.
\end{lemma}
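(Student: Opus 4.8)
The plan is to convert a winning strategy $\Sigma$ for White in ${\sf UG}(\lambda,\kappa)$ into a winning strategy for White in ${\sf UG}(\lambda',\kappa')$ by a shadow-run argument. While playing the real run of ${\sf UG}(\lambda',\kappa')$, White keeps alongside it a shadow run of ${\sf UG}(\lambda,\kappa)$ in which White follows $\Sigma$, and reads his real moves off the shadow run. I work with the functional description of the games, so a cut is simply a function from the current surviving set to the index set of pieces, with no non-emptiness demanded; I fix an identification of $\kappa$ with a subset of $\kappa'$ (available since $\kappa\le\kappa'$) and a surjection $\rho\colon\lambda\to\lambda'$ (available since $1\le\lambda'\le\lambda$). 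The two games share the same turn structure, so the runs proceed in lock-step, each round consisting of a cut by one player followed by a choice of a piece by the other. The invariant to maintain is that after the round producing the surviving sets $T_n\subseteq\kappa$ (shadow) and $R_n\subseteq\kappa'$ (real) one has $T_n\subseteq R_n\cap\kappa$; at the outset $T=\kappa$ and $R=\kappa'$.

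The translation rules. (i) When $\Sigma$ directs White to cut the shadow set $T_n$ by $f\colon T_n\to\lambda$, White cuts $R_n$ by an arbitrary extension of $\rho\circ f$ to $R_n$. (ii) When Black, in the real run, takes the piece of such a cut indexed by $\eta<\lambda'$, White has shadow-Black take the piece indexed by the least $\xi<\lambda$ with $\rho(\xi)=\eta$. (iii) When Black, in the real run, cuts $R_n$ by $h\colon R_n\to\lambda'$, White has shadow-Black cut $T_n$ by $h\restriction T_n$, regarded as a function into $\lambda\supseteq\lambda'$. (iv) When $\Sigma$ directs White to take the piece of $h\restriction T_n$ indexed by $\xi<\lambda$, then $\xi$ lies in the range of $h\restriction T_n$, hence in $\lambda'$ --- for otherwise $\Sigma$ would have emptied the shadow surviving set, making $\bigcap_m T_m=\emptyset$ and losing a run in which White plays $\Sigma$, contrary to $\Sigma$ being winning --- so White takes the piece indexed by $\xi$ in the real run as well. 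Pairing the rules as (i)--(ii) (White cuts) and (iii)--(iv) (Black cuts), a short check in either case gives $T_{n+1}\subseteq R_{n+1}\cap\kappa$; the only facts used are that a restriction, or a $\rho$-collapse, of a legal cut is again a legal cut, and that the chosen shadow piece always sits inside the chosen real piece.

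When the play is over, the shadow run is a genuine run of ${\sf UG}(\lambda,\kappa)$ in which White obeyed $\Sigma$, so White wins it and $\bigcap_n T_n\neq\emptyset$; by the invariant $\bigcap_n T_n\subseteq\bigcap_n R_n$, whence $\bigcap_n R_n\neq\emptyset$ and White wins the real run. Thus the strategy just described witnesses ${\sf U}(\lambda',\kappa')$. The one genuine obstacle is the mismatch in rule (i): $\Sigma$ hands White a cut into $\lambda$ pieces, whereas ${\sf UG}(\lambda',\kappa')$ allows only cuts into $\lambda'\le\lambda$ pieces. This is what the surjection $\rho$ is for, and the device costs White nothing because White wins by making an intersection non-empty: replacing a surviving set by a larger one --- which is the net effect both of collapsing a cut along $\rho$ and of extending it arbitrarily over $R_n\setminus T_n$ --- can only help White, and the verification has merely to confirm that the shadow side nevertheless remains a faithful $\Sigma$-run. (The increase of $\kappa$ to $\kappa'\ge\kappa$ by itself is the soft half: take $\rho$ to be the identity and keep $T_n=R_n\cap\kappa$ throughout.)
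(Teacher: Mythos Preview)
Your proof is correct. The paper states this lemma as an ``observation'' and gives no proof at all, so there is nothing to compare against; your shadow-run argument is exactly the natural verification that the paper leaves to the reader. The one place that needs care---and you handle it properly---is rule (iv): you must know that $\Sigma$'s chosen index $\xi$ actually lies in $\lambda'$ so that it is a legal move in the real game, and your appeal to the fact that an empty surviving set would produce a $\Sigma$-play lost by White is the right justification.
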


Here is how this cut-and-choose game is related to the Laver game:
\begin{theorem}\label{thm:ugtolaver}
For all cardinals $\lambda\ge 2$ and $\kappa\ge 1$, ${\sf U}(\lambda, \kappa)$ implies ${\sf L}(\lambda, \kappa)$.
\end{theorem}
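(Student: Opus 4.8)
The plan is to convert a winning strategy $W$ for White in ${\sf UG}(\lambda,\kappa)$ into a winning strategy for ONE in ${\sf LG}(\lambda,\kappa)$. By the equivalence of (a) and (c) in Lemma \ref{lem:LGequivalents}, it is enough to arrange that, for the play $(f_n:n<\omega)$ that ONE produces, the set $Y=\{(f_n(\alpha):n<\omega):\alpha\in\kappa\}$ carries a strategy $\sigma$ for TWO in the game structure ${\sf GS}_{\lambda}$ all of whose plays belong to $Y$. The underlying idea is to run $W$ ``in parallel'' over all of Black's possible piece-selections in ${\sf UG}$, using White's opening move $f_0=W(\emptyset)$ — a partition of $\kappa$ into $\lambda$ pieces — to split $\kappa$ into the fibers on which the various parallel runs live, with ONE's later moves refining this splitting according to White's later partitions.

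First I would spell out ONE's strategy. ONE opens ${\sf LG}(\lambda,\kappa)$ with $f_0:=W(\emptyset)$. Recursively, suppose play has reached ONE's $(k+1)$-st turn, so that ONE has recorded its own earlier moves $f_0,f_2,\dots,f_{2k-2}$ and TWO has just played $f_{2k-1}$. For each $\vec s=(s_0,\dots,s_{k-1})\in{}^{k}\lambda$ consider the play of ${\sf UG}(\lambda,\kappa)$ in which White follows $W$ while Black plays $(s_0,f_1),(s_1,f_3),\dots,(s_{k-1},f_{2k-1})$; each such Black move is legal, since in the alternative description of ${\sf UG}(\lambda,\kappa)$ Black's choices range freely over $\lambda$ and over ${}^{\kappa}\lambda$. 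Let $(\xi^{\vec s}_{2k-1},f^{\vec s}_{2k})$ be White's resulting response. ONE then plays the function $f_{2k}$ given by $f_{2k}(\alpha):=f^{(f_0(\alpha),f_2(\alpha),\dots,f_{2k-2}(\alpha))}_{2k}(\alpha)$; that is, on the fiber of $\kappa$ along which ONE's earlier moves realized the values $s_0,\dots,s_{k-1}$, ONE copies the partition that the $\vec s$-run of $W$ prescribes.

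Next I would analyze an arbitrary completed play $(f_n:n<\omega)$ against this strategy. Fix $\vec\jmath=(j_0,j_1,\dots)\in{}^{\omega}\lambda$. The $\vec\jmath$-run described above is a legal play of ${\sf UG}(\lambda,\kappa)$ in which White uses the winning strategy $W$, so White wins it: some $\beta_{\vec\jmath}\in\kappa$ lies in the final intersection, meaning $f^{(j_0,\dots,j_{m-1})}_{2m}(\beta_{\vec\jmath})=j_m$ and $f_{2m+1}(\beta_{\vec\jmath})=\xi^{(j_0,\dots,j_m)}_{2m+1}$ for every $m$. An induction on $m$ then shows that $f_n(\beta_{\vec\jmath})$ is the $n$-th coordinate of $(j_0,\xi^{(j_0)}_{1},j_1,\xi^{(j_0,j_1)}_{3},j_2,\dots)$ for all $n$: granting $f_0(\beta_{\vec\jmath})=j_0,\dots,f_{2m-2}(\beta_{\vec\jmath})=j_{m-1}$, the definition of ONE's $(m+1)$-st move makes $f_{2m}(\beta_{\vec\jmath})=f^{(j_0,\dots,j_{m-1})}_{2m}(\beta_{\vec\jmath})=j_m$, while $f_{2m+1}(\beta_{\vec\jmath})=\xi^{(j_0,\dots,j_m)}_{2m+1}$ is given. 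Hence that sequence lies in $Y$.

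To finish, I would define $\sigma$ for TWO in the structure ${\sf GS}_{\lambda}$ by $\sigma(\nu_0,\nu_2,\dots,\nu_{2k}):=\xi^{(\nu_0,\nu_2,\dots,\nu_{2k})}_{2k+1}$; this depends only on ONE's moves (the functions $f_1,f_3,\dots$ being fixed parameters of the ${\sf LG}$-play), so it is a legitimate strategy. For any choice $\nu_0,\nu_2,\dots$ of ONE's moves the corresponding $\sigma$-play is $(\nu_0,\xi^{(\nu_0)}_{1},\nu_2,\xi^{(\nu_0,\nu_2)}_{3},\dots)$, which by the previous paragraph (taking $\vec\jmath=(\nu_0,\nu_2,\dots)$) equals $(f_n(\beta_{\vec\jmath}):n<\omega)\in Y$. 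Thus every $\sigma$-play lies in $Y$, so by Lemma \ref{lem:LGequivalents} the ${\sf LG}$-play is won by ONE; since TWO's moves were arbitrary, ONE's strategy is winning and ${\sf L}(\lambda,\kappa)$ holds. The one genuinely delicate point is the bookkeeping that ties this together: one must keep straight that the ``correct'' parallel run of $W$ for a board $\alpha\in\kappa$ is the one indexed by $(f_0(\alpha),f_2(\alpha),f_4(\alpha),\dots)$, so that ONE's single sequence of moves simultaneously carries out all of Black's selection strategies in ${\sf UG}$.
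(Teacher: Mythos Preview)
Your proof is correct and follows essentially the same route as the paper's: both define ONE's move $f_{2k}(\alpha)$ by running White's strategy in ${\sf UG}(\lambda,\kappa)$ along the branch determined by $(f_0(\alpha),f_2(\alpha),\dots,f_{2k-2}(\alpha))$ together with TWO's moves $f_1,f_3,\dots,f_{2k-1}$, and both define the witnessing $\sigma$ for TWO in ${\sf GS}_\lambda$ from White's piece-selections $\xi^{\vec s}_{2k+1}$ along the same branches. Your ``parallel runs'' language makes the bookkeeping more explicit, but the construction and the inductive verification that $f_{2m}(\beta_{\vec\jmath})=j_m$ are identical to the paper's.
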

\begin{proof}
Let $\lambda\ge 2$ and $\kappa\ge 1$ be given cardinal numbers. Assume that ${\sf U}(\lambda,\kappa)$ holds. We must show that ${\sf L}(\lambda,\kappa)$ holds.

Recall that a play of ${\sf LG}(\lambda,\kappa)$ is a sequence
$(f_0,f_1,f_2,\dots)$ in $^\kappa\lambda$ where $f_0,f_2,f_4,\dots$ are
chosen by ONE and $f_1,f_3,f_5,\dots$ by TWO. By Lemma \ref{lem:LGequivalents}, ONE wins the play
$(f_0,f_1,f_2,\dots)$ just in case there is a strategy $\sigma$ for TWO in
$\text{GS}_\lambda$ such that every $\sigma$-play of ${\sf GS}_{\lambda}$
has
the form $(f_0(\xi),f_1(\xi),f_2(\xi),\dots)$ for some $\xi\in\kappa$.

Let $\varphi$ be a winning strategy for White 
in ${\sf UG}(\lambda,\kappa)$, for which we will use the following notation. On the first move, White plays $f_0^\varphi$. For $n> 0$, if Black's first $n$ moves are
$(\alpha_0,f_1),\dots,(\alpha_{2n-2},f_{2n-1})$, White's next move is
\[
  (\alpha_{2n-1}^\varphi((\alpha_0,f_1),\dots,(\alpha_{2n-2},f_{2n-1})),
f_{2n}^\varphi((\alpha_0,f_1),\dots,(\alpha_{2n-2},f_{2n-1}))).
\]

Now we define a strategy $\Phi$ for ONE in ${\sf LG}(\lambda,\kappa)$. It
will be convenient to describe it in the form
$f_{2n}=\Phi(f_0,f_1,f_2,\dots,f_{2n-1})$, i.e., ONE's next move is given as
a function of the total history, including the moves of both players.
Namely, we define $\Phi(\emptyset)=f_0^\varphi$, and for $n> 0,\, \xi\in\kappa$,
we define
\[
  \Phi(f_0,f_1,\dots,f_{2n-1})(\xi)=f_{2n}^\varphi((f_0(\xi),f_1),(f_2(\xi),f_3),\dots,(f_{2n-2}(\xi),f_{2n-1}))(\xi).
\]

Suppose $(f_0,f_1,f_2,\dots)$ is a $\Phi$-play of
${\sf LG}(\lambda,\kappa)$; we have to show that this play is won by ONE.
First, we define a strategy $\sigma$ for TWO in ${\sf GS}_\lambda$, by
setting
\[
   \sigma(\alpha_0,\alpha_2,\dots,\alpha_{2n})=\alpha_{2n+1}^\varphi((\alpha_0,f_1),(\alpha_2,f_3),\dots,(\alpha_{2n},f_{2n+1})).
\]
Now let $(\alpha_0,\alpha_1,\alpha_2,\dots)$ be any $\sigma$-play of
$\text{GS}_\lambda$; we have to find $\xi\in\kappa$ such that
$f_n(\xi)=\alpha_n$ for all $n$.

For $n<\omega$ let $\hat
f_{2n}=f_{2n}^\varphi((\alpha_0,f_1),(\alpha_2,f_3),\dots,(\alpha_{2n-2},f_{2n-1}))$.
Then 
\[  (\hat f_0,(\alpha_0,f_1),(\alpha_1,\hat f_2),(\alpha_2,f_3),(\alpha_3,\hat f_4),\dots)
\] 
is a $\varphi$-play of
${\sf UG}(\lambda,\kappa)$. Since $\varphi$ is a winning strategy for White in ${\sf UG}(\lambda,\kappa)$, there 
exists $\xi\in\kappa$ such that $\hat f_{2n}(\xi)=\alpha_{2n}$ and $f_{2n+1}(\xi)=\alpha_{2n+1}$ for all $n< \omega$.

Finally, we prove by induction on $n$ that $f_{2n}(\xi)=\alpha_{2n}$ for all $n<\omega$. This is clear for $n=0$, since $f_0=f_0^\varphi=\hat f_0$. Now suppose that $n>0$, and $f_{2k}(\xi)=\alpha_{2k}$ for all $k< n$; then
\[
  \begin{tabular}{lcl}
          $f_{2n}(\xi)$ & = & $\Phi(f_0,f_1,f_2,\dots,f_{2n-1})(\xi)$ \\
                               & = & $f_{2n}^\varphi((f_0(\xi),f_1),(f_2(\xi),f_3),\dots,(f_{2n-2}(\xi),f_{2n-1}))(\xi)$ \\
                               & = & $f_{2n}^\varphi((\alpha_0,f_1),(\alpha_1,f_3),\dots,(\alpha_{2n-2},f_{2n-1}))(\xi)$\\
                               & = & $\hat f_{2n}(\xi)=\alpha_{2n}$.
  \end{tabular}
\]
\end{proof}
\begin{theorem}\footnote{Theorem \ref{thm:continuum} was independently discovered by S. Hechler.}\label{thm:continuum} 
If White has a winning strategy in ${\sf UG}(2,\kappa)$, then $\kappa>2^{\aleph_0}$.
\end{theorem}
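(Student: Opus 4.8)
The plan is to read this off from the machinery already assembled in Sections~2 and~3, together with the monotonicity lemma; no new idea is needed. I would prove the contrapositive: if $\kappa\le 2^{\aleph_0}$, then White has no winning strategy in ${\sf UG}(2,\kappa)$, i.e. ${\sf U}(2,\kappa)$ fails.

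First I would reduce to the case $\kappa=2^{\aleph_0}$. Assume, toward a contradiction, that ${\sf U}(2,\kappa)$ holds and $\kappa\le 2^{\aleph_0}$. Applying Lemma~\ref{monotone} with $\lambda'=\lambda=2$ and $\kappa'=2^{\aleph_0}\ge\kappa$, we get ${\sf U}(2,2^{\aleph_0})$: White has a winning strategy in ${\sf UG}(2,2^{\aleph_0})$. Next, Theorem~\ref{thm:ugtolaver} (with $\lambda=2$ and the board parameter $2^{\aleph_0}$) converts this into ${\sf L}(2,2^{\aleph_0})$, i.e. ONE has a winning strategy in the Laver game ${\sf LG}(2,2^{\aleph_0})$. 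On the other hand, Theorem~\ref{thm:TWOlaver} asserts that TWO has a winning strategy in ${\sf LG}(2,2^{\aleph_0})$. But a single game whose set of plays is partitioned into the two winning sets cannot be won by both players: running ONE's winning strategy against TWO's winning strategy produces one play, which would have to lie in both complementary winning sets. This contradiction yields $\kappa>2^{\aleph_0}$.

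I do not expect a real obstacle in carrying this out — all the weight sits in Theorem~\ref{thm:TWOlaver}, whose proof is where the continuum enters (via a totally imperfect subset of $\,^{\omega}\{0,1\}$ of size $2^{\aleph_0}$ and the impossibility of a one-to-one continuous map of $\,^{\omega}\{0,1\}$ into it). If one wanted a self-contained argument instead of invoking Theorems~\ref{thm:ugtolaver} and~\ref{thm:TWOlaver}, the subtle point would be \emph{strictness}: having Black make all of his partition moves trivial and follow a point $t\in\,^{\omega}\{0,1\}$ when choosing blocks, a diagonalization shows that distinct $t$'s force distinct witnesses in $\kappa$, giving only $\kappa\ge 2^{\aleph_0}$; excluding $\kappa=2^{\aleph_0}$ requires upgrading this to a \emph{continuous} injection and appealing to total imperfectness, which is exactly what Theorem~\ref{thm:TWOlaver} already packages. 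For that reason the economical proof is simply the chain Lemma~\ref{monotone} $\to$ Theorem~\ref{thm:ugtolaver} $\to$ Theorem~\ref{thm:TWOlaver} described above.
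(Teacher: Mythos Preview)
Your proof is correct and essentially matches the paper's own argument: the paper also derives ${\sf L}(2,\kappa)$ from ${\sf U}(2,\kappa)$ via Theorem~\ref{thm:ugtolaver} and then invokes Theorem~\ref{thm:TWOlaver} for the contradiction. The only cosmetic difference is where the monotonicity step occurs---you push $\kappa$ up to $2^{\aleph_0}$ at the Ulam-game level via Lemma~\ref{monotone} before converting, whereas the paper converts first and leaves the (equally easy) monotonicity of ${\sf L}(2,\cdot)$ implicit; your version is arguably more self-contained since Lemma~\ref{monotone} is actually stated.
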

\begin{proof} By Theorem \ref{thm:ugtolaver}, if White has a winning strategy in $\textsf{UG}(2,\kappa)$, then ONE has a winning strategy in $\textsf{LG}(2,\kappa)$. Then Theorem \ref{thm:TWOlaver} implies that $\kappa>2^{\aleph_0}$.
\end{proof}

\section{$\lambda=\aleph_0$ and Baumgartner's condition}

For cardinals $\lambda\ge2$ and $\kappa \ge 1$ we now have the following implications:
\[
  {\sf U}(\lambda,\kappa) \stackrel{\mbox{\tiny Th \ref{thm:ugtolaver}}}{\Longrightarrow} {\sf L}(\lambda,\kappa)\stackrel{\mbox{\tiny Th \ref{thm:lavertoGS}}}{\Longrightarrow}  {\sf D}(\lambda,\kappa)\stackrel{\mbox{\tiny Th \ref{BMGSconnection}}}{\Longrightarrow} {\sf B}(\lambda,\kappa)
\]
This sequence of implications gives us deeper insight into Conjecture \ref{galvin1} and Conjecture \ref{galvin2}. Before considering the general cases of these conjectures in the next section, we 
first explore the special case when $\lambda=\aleph_0$. The results in this section explore the constraints on $\kappa$ under which White has a winning strategy in the game ${\sf UG}(\aleph_0,\kappa)$. 

\begin{theorem}\label{thm:omega2equivalence} 
Let $\kappa$ be an infinite cardinal number. Then White has a winning strategy in ${\sf UG}(\aleph_0,\kappa)$ if, and only if, White has a winning strategy in ${\sf UG}(2,\kappa)$.
\end{theorem}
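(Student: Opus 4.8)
The plan is to prove the theorem by establishing both implications, of which only one direction requires real work. One direction is immediate from Lemma~\ref{monotone}: since $2 \le \aleph_0$, the monotonicity lemma gives that a winning strategy for White in ${\sf UG}(2,\kappa)$ yields one in ${\sf UG}(\aleph_0,\kappa)$ (with $\kappa' = \kappa$), so the ``if'' direction is done. The substance is the converse: assuming White has a winning strategy in ${\sf UG}(\aleph_0,\kappa)$, produce one in ${\sf UG}(2,\kappa)$.

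For the hard direction, the idea is to \emph{simulate} a single round of the $\aleph_0$-game by a finite block of rounds of the $2$-game, using the fact that a countable partition can be refined to within a finite binary tree of bipartitions. Concretely, suppose White has a winning strategy $\varphi$ in ${\sf UG}(\aleph_0,\kappa)$. To describe White's strategy in ${\sf UG}(2,\kappa)$, I would maintain a bijection (or at least a fixed enumeration) between $\aleph_0$ and the set of finite binary strings, i.e.\ think of a countable partition of a set $S$ as being coded by a sequence of successive bipartitions along a countably-branching-via-binary tree. When $\varphi$ instructs White to cut a set $S$ into countably many pieces $\{P_i : i < \omega\}$, White instead, over the next countably many innings of the $2$-game, plays the bipartition that separates $P_0$ from $\bigcup_{i \ge 1} P_i$, then on the remainder separates $P_1$ from $\bigcup_{i\ge 2}P_i$, and so on; the catch is that in ${\sf UG}$ the \emph{opponent} chooses which half to keep, so one must interleave carefully so that whichever branch Black follows, after finitely many (or in the limit, $\omega$-many) steps the play lands inside a unique $P_i$ or else Black is forced down the ``infinite right spine'' — and this last case must be handled so it never produces a win for Black. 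The standard device here is to have White, when cutting $S$ into $\{P_i\}$, reorder so that $P_0$ is nonempty and, more importantly, arrange the bookkeeping so that the ``go right forever'' branch has empty intersection is \emph{not} automatic; instead one dovetails: the $k$-th bipartition in the block splits off $P_k$ only on a carefully chosen subset, ensuring every infinite descending sequence of the $2$-game eventually commits to some finite $i$. Then the $\omega$ innings of ${\sf UG}(2,\kappa)$ that simulate inning $n$ of ${\sf UG}(\aleph_0,\kappa)$ are grouped so that the full $2$-game, which has order type $\omega$, is partitioned into $\omega$ consecutive blocks each of order type $\omega$ — which is fine since $\omega \cdot \omega = \omega$.

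The key steps, in order, would be: (1) dispatch the easy direction via Lemma~\ref{monotone}; (2) fix a scheme that represents an arbitrary countable partition as a ``dovetailed'' sequence of bipartitions such that every infinite path through the induced binary tree stabilizes into exactly one cell of the original partition; (3) define White's strategy in ${\sf UG}(2,\kappa)$ by running $\varphi$ on the ``coarse'' level, feeding it, at the end of each simulated block, the cell of its previous partition that Black's $2$-game choices selected; (4) verify that Black's moves in a run of the $2$-game legitimately determine, block by block, a run of the $\aleph_0$-game against $\varphi$; (5) conclude: since $\varphi$ wins, there is $\alpha < \kappa$ lying in all the coarse sets $S_n$, and by step (2) that same $\alpha$ lies in all the fine bipartition-halves chosen along the way, so White wins the $2$-game play. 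The main obstacle is step (2) together with the verification in step (4) that no limit/residue behavior sneaks a win away from White: one must be certain that the ``infinitely often go to the large remainder'' branch of each simulated block cannot be exploited by Black to stall forever without committing to a cell — this is exactly where the dovetailing (splitting off $P_k$ from a shrinking remainder in a way that exhausts the remainder) is essential, and where I expect the proof to require the most care.
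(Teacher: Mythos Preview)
You have the monotonicity lemma backward. Lemma~\ref{monotone} says that $\mathsf{U}(\lambda,\kappa)$ implies $\mathsf{U}(\lambda',\kappa')$ when $\lambda' \le \lambda$; taking $\lambda = \aleph_0$ and $\lambda' = 2$ gives $\mathsf{U}(\aleph_0,\kappa) \Rightarrow \mathsf{U}(2,\kappa)$, \emph{not} the reverse. So the ``if'' direction you dismissed as immediate --- that a winning strategy in $\mathsf{UG}(2,\kappa)$ yields one in $\mathsf{UG}(\aleph_0,\kappa)$ --- is precisely the hard direction, and the direction you labor over is the one the paper leaves as an observation.

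Your proposed simulation also fails on its own terms. The identity $\omega \cdot \omega = \omega$ holds for cardinals, not for ordinals; a game of length $\omega$ cannot accommodate $\omega$ consecutive blocks each of length $\omega$, since after the first block all the innings are used up. Your worry about the ``go right forever'' branch is exactly this problem surfacing: if Black keeps choosing the remainder $\bigcup_{i \ge k} P_i$, the first simulated $\aleph_0$-inning never terminates and the whole $2$-game is consumed simulating a single move. (Moreover, your scheme ignores that in $\mathsf{UG}$ the cutting player alternates every inning, so you do not control the bipartitions in half of your ``block''.)

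The paper proves the genuine hard direction $\mathsf{U}(2,\kappa) \Rightarrow \mathsf{U}(\aleph_0,\kappa)$ by a different idea. Given a winning strategy $\sigma$ for White in $\mathsf{UG}(2,\kappa)$ and a partial $\sigma$-play $p$, let $\mathcal{S}(p)$ be the collection of all $S \subseteq \kappa$ such that some subset of $S$ is chosen in some $\sigma$-continuation of $p$. Two claims do the work: (i) every $S \in \mathcal{S}(p)$ can be split into $\aleph_0$ disjoint members of $\mathcal{S}(p)$; (ii) if $\bigcup_{n<\omega} S_n \in \mathcal{S}(p)$ then some $S_n \in \mathcal{S}(p)$ (proved by showing that otherwise Black could defeat $\sigma$). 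White then plays $\mathsf{UG}(\aleph_0,\kappa)$ by cutting via (i) and choosing via (ii), while maintaining on the side a growing partial $\sigma$-play $p_n$ of $\mathsf{UG}(2,\kappa)$ that ends inside the current set $S_n$. Since $\sigma$ wins the imagined $2$-game, $\bigcap_n S_n \neq \emptyset$.
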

\begin{proof} We show that if White has a winning strategy in ${\sf UG}(2,\kappa)$, then White has a winning strategy in ${\sf UG}(\aleph_0,\kappa)$.

Let $\sigma$ be a winning strategy for White in ${\sf UG}(2, \kappa)$. If $p$ is a partial
$\sigma$-play of ${\sf UG}(2, \kappa)$, let $\mathcal{S}(p)$ be the collection of all sets $S \subseteq \kappa$ such that some subset of $S$ is chosen in some $\sigma$-play $p^{\prime}$ which is a continuation of $p$.

{\flushleft{\underline{Claim 1.}}} Any set $S$ in $\mathcal{S}(p)$ can be partitioned into $\aleph_0$ pairwise disjoint sets, each of which belongs to $\mathcal{S}(p)$.

{\flushleft Proof of Claim 1.} It is enough to show that each member of $\mathcal{S}(p)$ contains two disjoint members of $\mathcal{S}(p)$. Suppose $S\in\mathcal S(p)$, and let the $\sigma$-play $p'$ be a continuation of $p$ in which White chooses a set $S_i\subseteq S$. After choosing the set $S_i$ White cuts it into two pieces, both of which are of course members of $\mathcal S(p)$.

{\flushleft{\underline{Claim 2.}}} If $\bigcup_{n < \omega}S_n = S \in \mathcal{S}(p)$, then $S_n\in\mathcal{S}(p)$ for
some $n$.

{\flushleft Proof of Claim 2} Assume the contrary, $S_n \not \in \mathcal{S}(p)$ for all $n$. Starting from $p$, since $S$ is in 
$\mathcal{S}(p)$, Black has a sequence of moves which forces White (using $\sigma$) to choose 
a subset of $S$. Next, by presenting White with the choice of a set contained in $S_0$ 
and a set disjoint from $S_0$, Black forces White to choose a set disjoint from $S_0$. 
Continuing in this way, he forces White to choose a set disjoint from $S_n$ for every $n$. 
Thus Black wins a $\sigma$-play of ${\sf UG}(2, \kappa)$, contradicting the assumption that 
$\sigma$ is a winning strategy for White.\\

To win ${\sf UG}(\aleph_0, \kappa)$, White starts by cutting $\kappa$  
into $\aleph_0$ pieces, each of which belongs to $\mathcal{S}(\emptyset)$. Black chooses one of them, call it $S_0$, and White chooses a partial $\sigma$-play $p_0$ of ${\sf UG}(2, \kappa)$ which ends up at a subset of $S_0$. Next, Black cuts $S_0$ into $\aleph_0$ pieces. Since $S_0$ is in $\mathcal{S}(p_0)$, White can choose a piece $S_1$ which belongs to $\mathcal{S}(p_0)$. White extends $p_0$ to a partial 
$\sigma$-play $p_1$ which ends up at a subset of $S_1$, cuts it into $\aleph_0$ pieces each belonging to $\mathcal{S}(p_1)$, and so on. Since White wins the imagined $\sigma$-play of ${\sf UG}(2, \kappa)$, White also wins the play of ${\sf UG}(\aleph_0, \kappa)$.
\end{proof}

Recall that a cardinal $\kappa$ is \emph{measurable} if it is uncountable and there is a $\kappa$-complete nonprincipal ultrafilter $\mathcal{U}\subseteq\mathcal{P}(\kappa)$. See Jech \cite{jech} for more information on these notions. 
C. Gray and R.M. Solovay proved (unpublished) that if it is consistent that there is an infinite cardinal number $\kappa$ such that White has a winning strategy in the game ${\sf UG}(2,\kappa)$, then it is consistent that there is a measurable cardinal. M. Magidor proved that if it is consistent that there is a measurable cardinal, then it is consistent that there is an infinite cardinal $\kappa$ such that White has a winning strategy in the game ${\sf UG}(2,\kappa)$. 

M. Magidor (unpublished) also showed that if it is consistent that there is a measurable cardinal $\kappa$, then it is consistent that $\kappa$ is measurable and White has a winning strategy in ${\sf UG}(2,\kappa)$. Then R.M. Solovay (unpublished) proved that if it is consistent that there is a measurable cardinal $\kappa$, then it is consistent\footnote{The model is of the form ${\sf L}\lbrack \mathcal{U}\rbrack$ where $\mathcal{U}$ witnesses the measurability of $\kappa$.} that $\kappa$ is measurable, but White does not have a winning strategy in the game ${\sf UG}(2,\kappa)$.  Then Laver proved that if it is consistent that there is a measurable cardinal then it is consistent that there is a successor $\kappa$ of an infinite regular cardinal such that White has a winning strategy in the game ${\sf UG}(2,\kappa)$. Laver's argument went unpublished also, but subsequently appeared in \cite{GJM}. We now give an exposition of ideas of Baumgartner and of Laver that culminates in the consistency, under appropriate large cardinal hypotheses, of Conjectures \ref{galvin1} and \ref{galvin2}.

Let $S$ be a set. A nonempty family $J\subseteq{\mathcal P}(S)$ is said to be an \emph{ideal} on $S$ if: $A\cup B$ is an element of $J$ whenever $A$ and $B$ are elements of $J$; If $B\in J$ and $A\subseteq B$ then $A\in J$. $J$ is said to be a \emph{proper} ideal if $J\neq {\mathcal P}(S)$. $J$ is said to be a \emph{free} ideal if $\bigcup J = S$.  For an ideal $J$ on a set $S$, the symbol $J^+$ denotes the set $\mathcal{P}(S)\setminus J$. The ideal $J\subseteq{\mathcal P}(S)$ is said to be \emph{atomless} if for each $X\in J^+$ there is a partition $X = A\cup B$ such that $A$ and $B$ are disjoint, and both are elements of $J^+$.

For $J$ a proper ideal on set $S$ the game ${\sf G}(J)$ of length $\omega$ is played as follows: In the first inning White chooses a set $W_0\subseteq S$ with $W_0\in J^+$, and Black responds with $B_0\subseteq W_0$ and $B_0\in J^+$. In the $n+1^{st}$ inning White chooses $W_{n}\subseteq B_{n-1}$ with $W_{n}\in J^+$, and Black responds with $B_{n}\subseteq W_{n}$ and $B_{n}\in J^+$. A play is won by Black if $\cap_{n<\omega}B_n\neq\emptyset$; else, White wins. This game was studied in \cite{GJM}.
 Banach considered the special case of $J=\{X\subseteq S\colon\vert X\vert<\vert S\vert\}$. Banach's game is recorded in Problem 67 of the Scottish Book \cite{scottishbook}. Schreier \cite{Schreier} showed that White has a winning strategy in Banach's game.

\begin{theorem}[Baumgartner]\label{baumgth} If there is an atomless proper ideal $J$ on $\kappa$ such that Black has a winning strategy in ${\sf G}(J)$, then White has a winning strategy in ${\sf UG}(\aleph_0,\kappa)$.
\end{theorem}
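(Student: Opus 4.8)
The plan is to reduce the statement to the combinatorics already developed in the proof of Theorem~\ref{thm:omega2equivalence}, using a winning strategy $\tau$ for Black in ${\sf G}(J)$ in the role played there by White's strategy in ${\sf UG}(2,\kappa)$. Fix such a $\tau$. For a finite partial play $p$ of ${\sf G}(J)$ in which Black has followed $\tau$ (so $p=\emptyset$, or $p$ ends with a move of Black), let $\mathcal{S}(p)$ be the collection of all sets $S\subseteq\kappa$ such that $B\subseteq S$ for some move $B$ of Black appearing in some $\tau$-play of ${\sf G}(J)$ that extends $p$. Since any such $B$ lies in $J^+$, every member of $\mathcal{S}(p)$ lies in $J^+$, and in particular is nonempty; moreover $\mathcal{S}(p')\subseteq\mathcal{S}(p)$ whenever $p'$ extends $p$, and $\kappa\in\mathcal{S}(\emptyset)$ because, $J$ being proper, $\kappa\in J^+$, so White may open ${\sf G}(J)$ with $\kappa$ and Black's reply $\tau(\kappa)$ is a subset of $\kappa$.

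I would then establish the two analogues of Claims 1 and 2 from the proof of Theorem~\ref{thm:omega2equivalence}. First, \emph{every $S\in\mathcal{S}(p)$ can be partitioned into $\aleph_0$ pairwise disjoint members of $\mathcal{S}(p)$}: it suffices to split $S$ into two disjoint members of $\mathcal{S}(p)$ and iterate, absorbing any residual intersection of the ``remainders'' into the first piece. To split $S$, fix a $\tau$-play extending $p$ whose last move $B$ of Black satisfies $B\subseteq S$; using that $J$ is atomless, write $B=C_0\sqcup C_1$ with $C_0,C_1\in J^+$; since White may continue this $\tau$-play by playing $C_0$ (respectively $C_1$), whose $\tau$-answer is a subset of it, both $C_0$ and $C_1$ lie in $\mathcal{S}(p)$, and $C_1$ together with $C_0\cup(S\setminus B)$ partition $S$ into two members of $\mathcal{S}(p)$. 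Second, \emph{if $S=\bigcup_{n<\omega}S_n$ and $S\in\mathcal{S}(p)$, then $S_n\in\mathcal{S}(p)$ for some $n$}. Suppose not; then, by monotonicity, no $S_n$ ever enters $\mathcal{S}(p')$ for any $p'$ extending $p$. Fix a $\tau$-play extending $p$ whose last move $B$ of Black is contained in $S$, and have White continue it so as to defeat Black: at a stage where the last move of Black is some $B'\in J^+$, one of $B'\cap S_n$ and $B'\setminus S_n$ lies in $J^+$; but if $B'\cap S_n\in J^+$, then White's playing it forces, via $\tau$, the next move of Black into $S_n$, contradicting $S_n\notin\mathcal{S}(p)$. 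Hence White can play so that, successively, the moves of Black avoid $S_0$, then $S_1$, then $S_2,\dots$, producing a $\tau$-play in which the intersection of Black's moves is contained in $B\subseteq S$ and disjoint from every $S_n$, hence empty --- contradicting that $\tau$ is winning for Black. (This second claim uses only that $J$ is an ideal; atomlessness enters only in the first.)

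With these two facts, White's winning strategy in ${\sf UG}(\aleph_0,\kappa)$ is assembled exactly as in the proof of Theorem~\ref{thm:omega2equivalence}. White opens by partitioning $\kappa$ into $\aleph_0$ pieces, each in $\mathcal{S}(\emptyset)$ (first claim); after Black selects $S_0\in\mathcal{S}(\emptyset)$, White fixes a finite partial $\tau$-play $p_0$ whose last move of Black is a subset of $S_0$. Inductively, maintaining a nested sequence $p_0\subsetneq p_1\subsetneq\cdots$ of finite partial $\tau$-plays such that the last move of Black in $p_m$ is a subset of $S_{2m}$ and $S_{2m}\in\mathcal{S}(p_m)$: when Black partitions $S_{2m}$ in ${\sf UG}(\aleph_0,\kappa)$, the second claim lets White pick a piece $S_{2m+1}\in\mathcal{S}(p_m)$; White then partitions $S_{2m+1}$ into $\aleph_0$ pieces in $\mathcal{S}(p_m)$ (first claim); Black picks $S_{2m+2}\in\mathcal{S}(p_m)$; and White extends $p_m$ to some $p_{m+1}\supsetneq p_m$ whose last move of Black is a subset of $S_{2m+2}$ (possible since $S_{2m+2}\in\mathcal{S}(p_m)$, adding one more round of ${\sf G}(J)$ if needed to make the extension strict). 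At the end, $\bigcup_m p_m$ is an infinite $\tau$-play; since $\tau$ wins ${\sf G}(J)$, the intersection of Black's moves in it is nonempty, and because that intersection equals $\bigcap_m B^{(m)}$, where $B^{(m)}$ is the last move of Black in $p_m$, and $B^{(m)}\subseteq S_{2m}$, it is contained in $\bigcap_m S_{2m}=\bigcap_n S_n$; so White wins the play of ${\sf UG}(\aleph_0,\kappa)$. I expect the second claim to be the crux: it is the only place where the hypothesis that $\tau$ is \emph{winning} is used, and it hinges on the observation that at each move White can either record progress toward emptying the intersection or else force the ``good'' set $S_n$ to appear as a Black move of some $\tau$-play.
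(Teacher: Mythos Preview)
Your argument is correct, but it takes a different route from the paper's. The paper first invokes Theorem~\ref{thm:omega2equivalence} as a black box to reduce to showing that White wins ${\sf UG}(2,\kappa)$, and then gives a direct move-by-move translation: White in ${\sf UG}(2,\kappa)$ simulates a play of ${\sf G}(J)$ on the side, using atomlessness to cut each current $J^+$-set into two $J^+$-pieces, and using the ideal property to pick a $J^+$-piece from Black's partition; the $\mathcal{S}(p)$-machinery never reappears. You instead transplant that machinery itself, replacing partial ${\sf UG}(2,\kappa)$-plays by partial $\tau$-plays of ${\sf G}(J)$, re-prove the two claims in this new setting, and build the ${\sf UG}(\aleph_0,\kappa)$ strategy directly. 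Both arguments use the same ingredients (atomlessness for splitting, the ideal property plus ``$\tau$ is winning'' for the covering claim), but the paper's is more modular while yours is self-contained and avoids the detour through ${\sf UG}(2,\kappa)$. One small point of presentation: in your Claim~1 you write ``fix a $\tau$-play extending $p$ whose last move $B$ of Black $\ldots$''; since a $\tau$-play is infinite, you mean a finite partial $\tau$-play (as you do later), and you are implicitly using that $\mathcal{S}(p)$ is upward closed when you put $C_0\cup(S\setminus B)$ into $\mathcal{S}(p)$.
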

\begin{proof}
Assume that $\kappa$ is an infinite cardinal number and that $J\subset{\mathcal P}(\kappa)$ is an atomless proper ideal on $\kappa$ for which Black has a winning strategy in the game ${\sf G}(J)$. We must show that then White has a winning strategy in the game ${\sf UG}(\aleph_0,\kappa)$. By Theorem \ref{thm:omega2equivalence} it is sufficient to show that White has a winning strategy in ${\sf UG}(2,\kappa)$.

Let $F$ be a winning strategy for Black in the game ${\sf G}(J)$. Define a strategy for White in the game ${\sf UG}(2,\kappa)$ as follows: 

Since $J$ is atomless, White's first move is $(C_0, D_0)$ where both $C_0$ and $D_0$ are in $J^+$, are disjoint, and have union $\kappa$. 

Suppose that Black selects one of these, $S_0$, and partitions it into two disjoint sets $(C_1, D_1)$. To decide White's response to Black's move, do the following: Since $S_0$ is in $J^+$ select $S_1\in\{C_1,\, D_1\}$ in $J^+$ and assign $B_0=S_1$ to White of the game ${\sf G}(J)$. 
Compute the response $B_1:=F(B_0)$ of Black of the game ${\sf G}(J)$, using Black's winning strategy $F$. 

Since $B_1$ is in $J^+$ and $B_1 \subseteq B_0$ and $J$ is atomless, choose a partition $(C_2, D_2)$ of $S_1$ with both $C_2\cap B_1$ and $D_2\cap B_1$ in $J^+$.

Suppose that Black chooses $S_2\in \{C_2, D_2\}$, and partitions it into two disjoint sets $(C_3,D_3)$.
To decide White's response to Black's move in the game ${\sf UG}(2,\kappa)$, do the following: Since $S_2\cap B_1$ is in $J^+$, select $S_3 \in \{C_3,\, D_3\}$ with $S_3\cap B_1 \in J^+$. Assign $B_2 = S_3\cap B_1$ to White of the game ${\sf G}(J)$. 
Compute the response $B_3:=F(B_0,B_2)$ of Black in the game ${\sf G}(J)$. 

As $J$ is an atomless ideal and $B_3$ is in $J^+$ White of the game ${\sf UG}(2,\kappa)$ chooses a partition $(C_4,D_4)$ of $S_3$ such that both $C_4\cap B_3$ and $D_4\cap B_3$ are in $J^+$, and so on.

\begin{center}
\begin{figure}[h]
\begin{tabular}{c|l}
\multicolumn{2}{c}{${\sf UG}(2,\kappa)$}\\ \hline\hline
\multicolumn{2}{c}{}\\
White                              & Black                                     \\ \hline
$\kappa$; $(C_0,D_0)$ &                                        \\
                                      & $S_0$; $(C_1,D_1)$      \\
\hspace{0.3in} $S_1$; $(C_2,D_2)$                 &                                        \\
                                      & $S_2$; $(C_3,D_3)$      \\
\hspace{0.3in}$S_3$; $(C_4,D_4)$   &                                         \\
                                      &  $S_4$; $(C_5,D_5)$                                         \\
\hspace{0.3in}$S_5$; $(C_6,D_6)$   &                                         \\
                                      &                                       \\
%

$\vdots$                         & $\vdots$ \\
\end{tabular}
\hspace{0.1in}
\begin{tabular}{c|l}
\multicolumn{2}{c}{${\sf G}(J)$}                        \\ \hline\hline
\multicolumn{2}{c}{}\\ 
White                             & Black               \\ \hline
                                     &                                       \\
                                     &                                                     \\
$B_0 = S_1$ & \\ 
                                     & $B_1:=F(B_0)$ \\
 $B_2 = S_3\cap B_1$ & \\ 
                                    & $B_3:=F(B_0,B_2)$                                    \\
$B_4 = S_5\cap B_3$ & \\ 
                                    & $B_5:=F(B_0,B_2,B_4)$                                    \\
$\vdots$                      & $\vdots$ \\
\end{tabular}
\caption{Playing ${\sf UG}(2,\kappa)$ using ${\sf G}(J)$.}\label{fig: Baumgartner}
\end{figure}
\end{center}

To see that this strategy for White in the game ${\sf UG}(2,\kappa)$ is a winning strategy, consider a play according to the strategy:
\[
 (C_0,D_0),\, (S_0;\, (C_1,\, D_1)),\, (S_1;\,(C_2,\, D_2)),\, (S_2;\, (C_3,\, D_3)),\,(S_3,\, (C_4,D_4),\, \cdots  
\]
Corresponding to this play there is an $F$-play $B_0,B_1,B_2,B_3,\dots$ of ${\sf G}(J)$ where $B_{2n}\subseteq S_{2n+1}$ for all $n$. Since $F$ is a winning strategy for Black in ${\sf G}(J)$ we have $\bigcap_{n<\omega}S_n\supseteq\bigcap_{n<\omega}B_n\ne\emptyset$, whence the strategy for White in ${\sf UG}(2,\kappa)$ is a winning strategy.
\end{proof}

We shall say that an 
ideal $J\subseteq\mathcal{P}(\kappa)$ satisfies \emph{Baumgartner's condition} if it is an atomless proper ideal for which Black has a winning strategy in the game ${\sf G}(J)$.
A free ideal $J$ on a set $S$ is said to be \emph{precipitous} if White does not have a winning strategy in the game ${\sf G}(J)$\footnote{The notion of a precipitous ideal has another definition that was shown in Theorem 2 of \cite{GJM} to be equivalent to White not having a winning strategy in the game {\sf G}(J).}. Thus, ideals satisfying Baumgartner's condition are precipitous ideals.

\begin{theorem}\label{thm:ConBaumgCondition} If it is consistent that there is an atomless ideal $J$ on a cardinal $\kappa$ such that Black has a winning strategy in the game ${\sf G}(J)$, then it is consistent that there is a measurable cardinal. 
\end{theorem}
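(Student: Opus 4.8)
The plan is to observe that the hypothesis yields a genuine precipitous ideal and then to invoke the theorem of Jech, Magidor, Mitchell and Prikry that a precipitous ideal produces an inner model with a measurable cardinal. So I would work in a model $V$ of {\sf ZFC} together with the hypothesis, fix an atomless proper ideal $J$ on $\kappa$ for which Black has a winning strategy in ${\sf G}(J)$, and aim to produce inside $V$ an inner model satisfying ``there is a measurable cardinal''; such an inner model is a model of ${\sf ZFC}+\exists\text{ measurable}$, and the corresponding implication between consistency statements follows. First I would check that $J$ is free: since $\emptyset\in J$, both pieces in a partition witnessing atomlessness of an $X\in J^{+}$ are nonempty, so no singleton lies in $J^{+}$, whence every singleton lies in $J$ and $\bigcup J=\kappa$. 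Next, no game can be won by both players, so Black's winning strategy in ${\sf G}(J)$ means White has none, i.e. $J$ is precipitous; by Theorem~2 of \cite{GJM} this is equivalent to the generic-ultrapower formulation of precipitousness.

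Then I would force over $V$ with $(J^{+},\subseteq)$; a generic $G$ is a $V$-ultrafilter on $\mathcal P(\kappa)^{V}$ extending the filter dual to $J$. Forming the generic ultrapower of the functions $\kappa\to V$ lying in $V$, taken modulo $G$, precipitousness says exactly that it is well-founded, so its transitive collapse gives an inner model $M$ of $V[G]$ together with an elementary embedding $j\colon V\to M$. Freeness of $J$ makes $j$ nontrivial: the identity function on $\kappa$ represents an ordinal below $j(\kappa)$ that is not $j(\beta)$ for any $\beta<\kappa$, so $j$ has a critical point $\delta\le\kappa$. No completeness of $J$ is needed here, only nontriviality of $j$. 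Finally, assuming toward a contradiction that $V$ has no inner model with a measurable cardinal, the Dodd--Jensen core model $K=K^{V}$ is defined, is absolute between $V$ and the set-forcing extension $V[G]$, and the Covering Lemma holds over it in $V[G]$; then $j\restriction K\colon K\to j(K)=K^{M}$ is a nontrivial elementary embedding of core-model-like structures inside a universe where covering still holds, which is precisely the configuration that Jech, Magidor, Mitchell and Prikry (see \cite{jech}) rule out in the absence of an inner model with a measurable cardinal. Equivalently: the existence of a precipitous ideal implies the existence of such an inner model; the contradiction completes the argument.

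The single genuinely hard ingredient is the last step, the core-model and Covering-Lemma machinery, which I would cite rather than reprove; everything else (freeness from atomlessness, ``Black wins $\Rightarrow$ White does not $\Rightarrow$ precipitous'', and the construction of the generic ultrapower) is routine. I also note that within the framework of this paper one can shortcut entirely: Theorem~\ref{baumgth} together with Theorem~\ref{thm:omega2equivalence} already gives $\mathrm{Con}(\exists\kappa\ \text{White wins}\ {\sf UG}(2,\kappa))$ from the hypothesis, and the Gray--Solovay result quoted above then yields $\mathrm{Con}(\exists\text{ measurable})$ at once; the point of the precipitous-ideal route is that it supplies a proof of that Gray--Solovay implication.
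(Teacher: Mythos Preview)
Your proposal is correct and follows essentially the same route as the paper: observe that Black's winning strategy makes $J$ precipitous (the paper's definition requires freeness, which you correctly derive from atomlessness), and then cite the Jech--Magidor--Mitchell--Prikry result to obtain an inner model with a measurable cardinal. The paper's proof is a two-line citation of Lemmas~2.1 and~2.2 of \cite{JMP}; you simply unpack that citation by sketching the generic-ultrapower and core-model argument behind it, and you additionally note the alternative shortcut via Theorem~\ref{baumgth} and the Gray--Solovay result, which the paper does not mention in this proof.
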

\begin{proof}
If $\kappa$ is a cardinal and $J$ is an atomless ideal on $\kappa$ such that Black has a winning strategy in ${\sf G}(J)$, then $J\subseteq\mathcal{P}(\kappa)$ is a precipitous ideal. Thus, by Lemmas 2.1 and 2.2 of \cite{JMP} , there is an inner model in which there is a measurable cardinal. 
\end{proof}

We now describe a class of ideals that satisfy Baumgartner's condition.   When $J\subseteq \mathcal{P}(\kappa)$ is a free ideal, a family $\mathcal{F}\subseteq J^+$ is said to be \emph{dense} if there is for each element $X$ of $J^+$ an element $Y$ of $\mathcal{F}$ such that $Y\subseteq X$. 

\begin{theorem}\label{thm:densefamilies} Let $J\subseteq \mathcal{P}(\kappa)$ be a proper free ideal. If there is a dense family $\mathcal{F}\subseteq J^+$ with the property that $\bigcap_{n<\omega}X_n\neq \emptyset$ for any sequence $(X_n:n<\omega)$ of elements of $\mathcal{F}$ such that for each $n$, $X_{n+1}\subseteq X_n$, then Black has a winning strategy in the game ${\sf G}(J)$.
\end{theorem}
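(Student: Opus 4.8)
The plan is to have Black simply play the dense family $\mathcal F$. Given the winning (for Black) structure of $\mathcal F$ — namely that descending $\omega$-sequences from $\mathcal F$ have nonempty intersection — Black's strategy in ${\sf G}(J)$ is: whenever White presents a set $W_n\in J^+$, Black uses density to pick some $B_n\in\mathcal F$ with $B_n\subseteq W_n$. The only subtlety is that the game rules require $B_n\subseteq W_n$ and $W_{n+1}\subseteq B_n$, so the sets Black actually plays form a descending chain $B_0\supseteq B_1\supseteq\cdots$ (since $B_{n+1}\subseteq W_{n+1}\subseteq B_n$), and all of them lie in $\mathcal F$. Hence by the hypothesis on $\mathcal F$, $\bigcap_{n<\omega}B_n\neq\emptyset$, so Black wins.

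First I would set up the strategy formally: define $F$ on a partial play $W_0,B_0,\dots,W_n$ (a legal position with White to have just moved) by choosing, via the density of $\mathcal F$ in $J^+$, some $Y\in\mathcal F$ with $Y\subseteq W_n$, and setting $F(W_0,\dots,W_n)=Y$. I should note this is a legal move for Black: $Y\in\mathcal F\subseteq J^+$ and $Y\subseteq W_n$, as required. (One can fix a choice function on $\mathcal F$ in advance so that $F$ is genuinely a function of the position, not merely a multifunction.) Next I would verify that in any $F$-play $W_0,B_0,W_1,B_1,\dots$ the sets $B_n$ form a $\subseteq$-decreasing sequence: by the rules $W_{n+1}\subseteq B_n$, and $B_{n+1}=F(\dots,W_{n+1})\subseteq W_{n+1}$, so $B_{n+1}\subseteq B_n$. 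Finally, since each $B_n\in\mathcal F$ and the $B_n$ are decreasing, the property assumed of $\mathcal F$ gives $\bigcap_{n<\omega}B_n\neq\emptyset$, which is precisely Black's winning condition in ${\sf G}(J)$.

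There is essentially no obstacle here; the argument is a direct unwinding of the definitions. The one point that deserves a sentence of care is that Black's first move $B_0$ is a subset of White's first move $W_0$ (not of all of $\kappa$), so density must be invoked relative to $W_0$; but density of $\mathcal F$ in $J^+$ says exactly that every element of $J^+$ — in particular $W_0$ — contains an element of $\mathcal F$, so this is fine, and the same remark applies at every later inning. Note also that properness and freeness of $J$ are not really used in the argument beyond ensuring $J^+$ is the appropriate nonempty collection on which the game is played; the whole content is in the combinatorial property of $\mathcal F$.
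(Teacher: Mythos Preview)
Your proposal is correct and is exactly the strategy the paper uses: Black simply responds to each White move $X\in J^+$ with some $F(X)\in\mathcal{F}$ contained in $X$, and the decreasing property of $\mathcal{F}$ does the rest. The paper states this in a single sentence; you have spelled out the verification that the $B_n$ are decreasing, but the argument is the same.
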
 
\begin{proof}
Black, in fact, has a very simple winning strategy, namely when White plays a set $X\in J^+$, then Black responds with $F(X)\in\mathcal{F}$ such that $F(X)\subseteq X$.
\end{proof}

Laver proved a result which implies
\begin{theorem}[Laver]\label{thm:Laverdenseideal}
If it is consistent that there is a measurable cardinal $\kappa$ then it is consistent that {\sf CH} holds and there is an atomless ideal $J$ on $\aleph_2$ and a dense family $\mathcal{F}\subseteq J^+$ such that for each sequence $(X_n:n<\omega)$ of sets in $J^+$ with $X_n\supseteq X_{n+1}$ for all $n$, the set $\bigcap_{n<\omega}X_n$ is in $J^+$.
\end{theorem}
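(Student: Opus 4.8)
The natural strategy is to Lévy-collapse a measurable cardinal down to $\aleph_2$ and let its normal measure condense to a normal ideal on $\aleph_2$, arranging that the quotient $\mathcal P(\aleph_2)/J$ is (essentially) the countably closed tail of the collapse. Concretely, I would begin with a ground model $V$ of {\sf GCH} containing a measurable $\kappa$, together with a normal ultrafilter $\mathcal U$ on $\kappa$ and the ultrapower embedding $j\colon V\to M\cong\mathrm{Ult}(V,\mathcal U)$, so $\mathrm{crit}(j)=\kappa$ and ${}^{\kappa}M\subseteq M$. Force with $P=\Coll(\omega_1,{<}\kappa)$ and pass to $V[G]$. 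Since $P$ is countably closed it adds no reals, so {\sf CH} holds in $V[G]$; since $\kappa$ is inaccessible, $P$ is $\kappa$-c.c., so $\kappa=\aleph_2$ in $V[G]$ and {\sf GCH} persists above $\kappa$. In $M$ the image $j(P)=\Coll(\omega_1,{<}j(\kappa))$ factors as a product $P\times Q$, where $Q=\Coll(\omega_1,[\kappa,j(\kappa)))$; crucially $Q$ is countably closed, has size $\kappa^{+}$ by {\sf GCH}, and is computed identically in $M[G]$ and in $V[G]$ (its conditions are countable, hence in $M$).

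Next I would define the ideal. Because $\mathrm{crit}(j)=\kappa$ and the conditions of $P$ are small, for a $P$-name $\dot A$ of a subset of $\kappa$ the name $j(\dot A)$ can --- after its $P$-coordinates are evaluated by $G$ --- be regarded as a $Q$-name over $M[G]$, call it $j(\dot A)_G$, and the ``master condition'' identity $j\restriction P=\mathrm{id}$ shows that whether some $q\in Q$ forces $\check\kappa\in j(\dot A)_G$ depends only on $A=\dot A^{G}$, not on the chosen name. Working in $V[G]$, declare $A\in J^{+}$ iff some $q\in Q$ forces (over $M[G]$) that $\check\kappa\in j(\dot A)_G$. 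I would then verify the standard structural facts: $J$ is a proper free ideal, and it is normal --- given a regressive $f$ on $A\in J^{+}$, extend a positivity witness $q$ for $A$ so as to decide $j(\dot f)_G(\check\kappa)=\check\gamma$ for some $\gamma<\kappa$; then $\{\beta\in A:f(\beta)=\gamma\}\in J^{+}$. Since a normal free ideal on $\kappa$ is $\kappa$-complete, an atom of $J$ would yield a $\kappa$-complete nonprincipal ultrafilter on $\aleph_2$, which cannot exist; hence $J$ is atomless.

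The key remaining point is that $\mathcal P(\aleph_2)/J$ is countably closed as a partial order, equivalently that any $\subseteq$-decreasing $\omega$-sequence $A_0\supseteq A_1\supseteq\cdots$ of $J$-positive sets has $\bigcap_nA_n\in J^{+}$; one then takes for $\mathcal F$ any $\subseteq$-dense subfamily of $J^{+}$ (for instance $J^{+}$ itself, or its natural dense copy of $Q$). This is where the countable closure of the tail forcing $Q$ enters. I would establish it through the standard analysis of generic ultrapowers: the assignment sending $A\in J^{+}$ to the condition ``$\check\kappa\in j(\dot A)_G$'' identifies $\mathcal P(\aleph_2)/J$ (or a dense subset of it) with (a dense subset of) the countably closed poset $Q$; since $Q$ is countably closed, a decreasing $\omega$-sequence of its conditions has a lower bound, and pulling this bound back produces a single $J$-positive refinement of all the $A_n$, which is contained in $\bigcap_nA_n$ modulo $J$. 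Hence $\bigcap_nA_n\in J^{+}$. This yields the conclusion: in $V[G]$, {\sf CH} holds, $J$ is an atomless proper ideal on $\aleph_2$, $\mathcal F\subseteq J^{+}$ is dense, and decreasing $\omega$-sequences from $J^{+}$ have intersection in $J^{+}$.

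I expect the main obstacle to be exactly this last step: converting ``countably closed as a poset'' for the tail $Q$ into ``countably closed as a poset'' for the quotient $\mathcal P(\aleph_2)/J$. This requires the precise identification of $\mathcal P(\aleph_2)/J$ with (a dense subset of) $Q$, not merely forcing equivalence, since forcing equivalence alone would give only that $J^{+}$ carries a dense $\sigma$-closed subfamily and that no new reals are added, rather than the literal intersection property asserted. Getting this identification right --- and with it the name-independence of the definition of $J$ and the verification of normality --- is the technical heart of the argument; the collapse bookkeeping, the preservation of {\sf CH}, and atomlessness are routine once it is in place.
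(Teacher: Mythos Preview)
Your approach is correct and uses the same forcing extension as the paper (the L\'evy collapse $\Coll(\omega_1,{<}\kappa)$ of a measurable $\kappa$), but the analysis is framed differently. The paper does not prove Theorem~\ref{thm:Laverdenseideal} directly; it defers to the more general Theorem~\ref{thm:getilambdakappa}, specialized to $\lambda=\aleph_0$. There the ideal $J$ is simply the dual of the filter generated in $V[G]$ by the ground-model normal ultrafilter $\mathcal{U}$, and the dense family is taken to be $\mathcal{F}=\mathcal{U}$ itself. The key technical step is then Lemma~\ref{densefamily} (quoted from \cite{GJM}): the ground-model $\mathcal{U}$ is dense in $J^{+}$. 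The $\sigma$-closure of $\mathcal{F}$ is immediate, since any countable sequence from $\mathcal{U}$ lies in $V$ (the collapse is $\omega_1$-closed) and $\mathcal{U}$ is $\kappa$-complete there. This is more elementary than your route through the lifted embedding and the identification of $\mathcal{P}(\aleph_2)/J$ with the tail forcing $Q$: it avoids the generic-ultrapower machinery entirely and never needs to argue that the quotient is literally countably closed. Conversely, your framework is more conceptual and explains \emph{why} the construction works (the quotient is the countably closed tail), and it yields the stronger conclusion that every decreasing $\omega$-sequence in $J^{+}$---not just in $\mathcal{F}$---has $J^{+}$ intersection; but the ``precise identification'' you flag as the main obstacle is, unwound, essentially the same density statement the paper cites from \cite{GJM}. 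The two ideals coincide, so you are not building a different model, only analyzing it through heavier tools.
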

We shall explain a proof of Laver's more general version of this result later in the paper.

As a result of the facts given in this section of the paper we can now conclude:
\begin{theorem}\label{thm:aleph0conj}
If it is consistent that there is a measurable cardinal, then it is consistent that $2^{\aleph_0} = \aleph_1$ and ${\sf U}(\aleph_0,\aleph_2)$ holds.
\end{theorem}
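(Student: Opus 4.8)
The plan is to chain together Laver's consistency result (Theorem \ref{thm:Laverdenseideal}) with the combinatorial implications of Section 4. Begin with a model in which a measurable cardinal exists. Theorem \ref{thm:Laverdenseideal} then produces an extension in which {\sf CH} holds and there is an atomless ideal $J$ on $\aleph_2$ together with a dense family $\mathcal{F}\subseteq J^+$ such that $\bigcap_{n<\omega}X_n\in J^+$ for every $\subseteq$-decreasing sequence $(X_n:n<\omega)$ of sets from $J^+$. Fix such a model and work inside it; note that {\sf CH} already gives $2^{\aleph_0}=\aleph_1$, which is half of what we want.

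Next I would verify that $J$ (and $\mathcal{F}$) satisfy the hypotheses of Theorem \ref{thm:densefamilies}. The ideal $J$ is proper — otherwise $J^+=\emptyset$ and the conclusion of Theorem \ref{thm:Laverdenseideal} is vacuous — hence $\emptyset\in J$ and so $\emptyset\notin J^+$. Consequently the property ``$\bigcap_{n<\omega}X_n\in J^+$'' is a strengthening of ``$\bigcap_{n<\omega}X_n\neq\emptyset$'', and since $\mathcal{F}\subseteq J^+$ it applies in particular to $\subseteq$-decreasing sequences drawn from $\mathcal{F}$. Moreover $J$ is free: since $J$ is atomless and proper, no singleton $\{\alpha\}$ can lie in $J^+$ (such a set admits no partition into two disjoint members of $J^+$, as one of the pieces would be $\emptyset\in J$), so every singleton lies in $J$ and $\bigcup J=\aleph_2$. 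Thus $J$ is a proper free ideal with a dense family meeting the required intersection condition, and Theorem \ref{thm:densefamilies} yields a winning strategy for Black in the game ${\sf G}(J)$.

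Now $J$ is an atomless proper ideal on $\aleph_2$ for which Black has a winning strategy in ${\sf G}(J)$ — that is, $J$ satisfies Baumgartner's condition — so Theorem \ref{baumgth} gives a winning strategy for White in ${\sf UG}(\aleph_0,\aleph_2)$, i.e.\ ${\sf U}(\aleph_0,\aleph_2)$ holds in our model. Combined with {\sf CH}, this completes the argument.

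I do not anticipate a genuine obstacle here: the proof is a bookkeeping exercise that routes the output of Laver's construction through Theorems \ref{thm:densefamilies} and \ref{baumgth}. The only step needing a little care is matching up the hypotheses — observing that Laver's ``the intersection stays in $J^+$'' condition is, because a proper ideal contains $\emptyset$, stronger than the ``nonempty intersection'' condition used in Theorem \ref{thm:densefamilies}, and that atomlessness together with properness forces $J$ to be free — after which the cited theorems do all the work.
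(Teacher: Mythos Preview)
Your proposal is correct and is exactly the argument the paper intends: the paper does not write out a proof but simply states that the theorem follows ``as a result of the facts given in this section,'' meaning the chain Theorem~\ref{thm:Laverdenseideal} $\Rightarrow$ Theorem~\ref{thm:densefamilies} $\Rightarrow$ Theorem~\ref{baumgth}. Your care in matching hypotheses (that ``intersection in $J^+$'' implies ``nonempty intersection,'' and that an atomless proper ideal is automatically free) is a welcome bit of explicitness that the paper leaves to the reader.
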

Thus, the consistency of the existence of a measurable cardinal implies the consistency of the each of the special cases ${\sf B}(\aleph_0,(2^{\aleph_0})^+)$ and ${\sf D}(\aleph_0,(2^{\aleph_0})^+)$ of Conjectures \ref{galvin1} and \ref{galvin2}, respectively. 

\section{The general case of the conjectures}

Let $\kappa$ and $\lambda$ be infinite cardinals. Define:
 \begin{quote} $\textsf{I}(\lambda,\kappa)$: There is a free proper ideal ${J}$ on $\kappa$ such that
   \begin{enumerate}
   \item{${J}$ is $\lambda^+$-complete;}
   \item{each $X\in {J}^+$ has a partition into $\lambda$ disjoint sets, each in ${J}^+$;}
   \item{there is a family $\mathcal{F}\subseteq {J}^+$ such that:
       \begin{enumerate}
       \item for each $X \in {J}^+$ there exists a $Y\in\mathcal{F}$ such that $Y\subseteq X$ ($\mathcal{F}$ is dense), and
       \item every descending $\omega$-sequence of sets from $\mathcal{F}$ has a nonempty intersection.
       \end{enumerate} }
   \end{enumerate}
 \end{quote}
 
If we make the convention that for a cardinal $\lambda$ the symbol $\lambda^+$ denotes the least \emph{infinite} cardinal greater than $\lambda$ (so that $2^+=\aleph_0$), then $I(2,\kappa)$ implies 
 that there is an ideal on $\kappa$ which satisfies Baumgartner's condition, whence ${\sf U}(\aleph_0,\kappa)$ holds. 

\begin{theorem}[Laver] \label{magidor}
  For infinite cardinals $\kappa$ and $\lambda$, $\emph{\textsf{I}}(\lambda,\kappa)$ implies ${\sf U}(\lambda,\kappa)$.
\end{theorem}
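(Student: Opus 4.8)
The plan is to generalize Baumgartner's proof of Theorem \ref{baumgth} (which is essentially the case $\lambda=2$): White plays ${\sf UG}(\lambda,\kappa)$ while running a ``shadow'' copy of the game ${\sf G}(J)$ alongside it, feeding into ${\sf G}(J)$ sets extracted from the ${\sf UG}$-play and letting Black there answer with a fixed winning strategy. First I would fix, using property (3) of $\textsf{I}(\lambda,\kappa)$ together with Theorem \ref{thm:densefamilies}, a winning strategy $F$ for Black in ${\sf G}(J)$; note also that since $J$ is proper and free, $\kappa\in J^+$, and (from freeness plus $\lambda^+$-completeness) every member of $J^+$ has size $>\lambda$, so the partitions White is asked to produce can always be taken into $\lambda$ nonempty pieces.

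White's strategy in ${\sf UG}(\lambda,\kappa)$ is designed to maintain, after the move in which White has just chosen $S_{2n+1}$, a partial ${\sf G}(J)$-play $B_0\supseteq B_1\supseteq\cdots\supseteq B_{2n}$ (all in $J^+$) in which White's moves are $B_0,B_2,\dots,B_{2n}$ and Black's moves $B_1,B_3,\dots,B_{2n-1}$ are dictated by $F$, and such that $B_0=S_1$ and $B_{2k}=S_{2k+1}\cap B_{2k-1}$ for $1\le k\le n$; in particular $B_{2k}\subseteq S_{2k+1}$. To start, White uses property (2) to partition $\kappa\in J^+$ into $\lambda$ sets in $J^+$, so whatever piece Black takes as $S_0$ is automatically in $J^+$.

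Two elementary facts drive the inductive step, and they are exactly the points where $\lambda$ replaces $2$ in Baumgartner's argument. (i) \emph{Choosing a piece:} if a set $T$ is partitioned into $\lambda$ pieces and $B\in J^+$ satisfies $T\cap B\in J^+$, then $T\cap B$ is the union of the $\lambda$ sets ``piece $\cap\,B$'', so by the $\lambda^+$-completeness of $J$ (property (1)) at least one of them lies in $J^+$, and White chooses the corresponding piece. (ii) \emph{Splitting a set:} if $B\subseteq S$ and $B\in J^+$, then, partitioning $B$ into $\lambda$ sets in $J^+$ by property (2) and dumping $S\setminus B$ into one of them, White gets a partition of $S$ into $\lambda$ pieces each meeting $B$ in a $J^+$ set. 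The induction now runs as in Baumgartner's proof: given $S_{2n-1}$ and the shadow set $B_{2n-1}\subseteq S_{2n-1}$, White uses (ii) to partition $S_{2n-1}$; Black answers with some $S_{2n}$, which by construction has $S_{2n}\cap B_{2n-1}\in J^+$; Black partitions $S_{2n}$ and White uses (i) (with $T=S_{2n}$, $B=B_{2n-1}$) to pick $S_{2n+1}$ with $S_{2n+1}\cap B_{2n-1}\in J^+$; White sets $B_{2n}:=S_{2n+1}\cap B_{2n-1}$, plays it in the shadow ${\sf G}(J)$, and lets Black reply $B_{2n+1}:=F(B_0,\dots,B_{2n})\subseteq B_{2n}$, restoring the invariant. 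The first round is the same with ``$B=\kappa$'' in place of a shadow set.

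At the end of a play following this strategy, $(B_n)_{n<\omega}$ is a play of ${\sf G}(J)$ in which Black used the winning strategy $F$, so $\bigcap_{n<\omega}B_n\ne\emptyset$. Since the $S_n$ are decreasing and $B_{2n}\subseteq S_{2n+1}$ for all $n$, we get $\bigcap_{n<\omega}S_n=\bigcap_{n<\omega}S_{2n+1}\supseteq\bigcap_{n<\omega}B_{2n}\supseteq\bigcap_{n<\omega}B_n\ne\emptyset$, so White wins ${\sf UG}(\lambda,\kappa)$; hence ${\sf U}(\lambda,\kappa)$ holds. I expect the only genuine obstacle — beyond keeping the parity bookkeeping between the two games straight and maintaining the ``relative positivity'' invariant $S_{2n+1}\cap B_{2n-1}\in J^+$ — to be fact (i): this is precisely where $\lambda^+$-completeness is indispensable, and (since monotonicity in $\lambda$ runs the wrong way in Lemma \ref{monotone}) there is no shortcut reducing the general case to $\lambda=2$.
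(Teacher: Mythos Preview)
Your proof is correct and is essentially the paper's own argument: the paper works directly with the dense family $\mathcal{F}$, picking $F_n\in\mathcal{F}$ inside White's chosen pieces, while you route the same idea through Theorem~\ref{thm:densefamilies} and a shadow ${\sf G}(J)$-play --- but since Black's strategy in that shadow game simply selects an element of $\mathcal{F}$, your sets $B_{2n+1}$ are exactly the paper's $F_n$. The two facts (i) and (ii) you isolate are precisely the two uses the paper makes of $\lambda^+$-completeness and property~(2), respectively.
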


\begin{proof}
Let ${J}$ be an ideal witnessing $\textsf{I}(\lambda,\kappa)$. Define a strategy $F$ for White in ${\sf UG}(\lambda,\kappa)$ as follows:
As $\kappa$ is a member of ${J}^+$, choose a partition $\kappa = \bigcup_{\xi<\lambda}X_{\xi}$ such that each $X_{\xi}$ is a member of ${J}^+$. 
Define White's first move, $F(\emptyset)$, to be this partition. 

  When Black responds with $\xi_0$ and a partition $\langle X_{\xi_0,\xi} \colon \xi<\lambda \rangle$ of $X_{\xi_0}$, 
White 
chooses
  $X_{\xi_0,\xi_1}\in{J}^+$, which exists by the
  $\lambda^+$-completeness of ${J}$, and $F_0\in \mathcal{F}$
  such that $F_0 \subseteq X_{\xi_0,\xi_1}$. White then plays a partition
  $\langle X_{\xi_0,\xi_1,\xi}\colon \xi<\lambda \rangle$ of $X_{\xi_0,\xi_1}$
  so that $F_0\cap
  X_{\xi_0,\xi_1,\xi}\in{J}^+$ for all $\xi<\lambda$. 
  Next Black responds with $\xi_2$ and $\langle X_{\xi_0,\xi_1,\xi_2,\xi}
  \colon \xi<\lambda \rangle$, etc. We end up with a descending $\omega$-sequence
  \begin{equation*}
    X_{\xi_0}\supseteq\cdots\supseteq X_{\xi_0,\xi_1,\ldots,\xi_{n}}\supseteq\cdots
  \end{equation*}
with the properties that, for all $n$, 
  \begin{equation*}
    X_{\xi_0,\xi_1,\ldots,\xi_{2n+1}}\supseteq F_n \text{ and } F_n\cap
    X_{\xi_0,\xi_1,\ldots,\xi_{2n+1},\xi_{2n+2}} \in{J}^+.
  \end{equation*}

Since the sets $F_n$ are decreasing and in ${\mathcal F}$, we can find $\alpha\in\bigcap_{n<\omega} F_n$. Then $\alpha\in
  X_{\xi_0,\xi_1,\ldots,\xi_{2n+1}}$ for all $n$, and thus this is a
  winning play for White.
\end{proof}

For cardinals $\lambda\ge 2$ and $\kappa\ge 1$ we now have the following implications:
\[
 \textsf{I}(\lambda,\kappa) \stackrel{\mbox{\tiny Th \ref{magidor}}}{\Longrightarrow}  {\sf U}(\lambda,\kappa) \stackrel{\mbox{\tiny Th \ref{thm:ugtolaver}}}{\Longrightarrow} {\sf L}(\lambda,\kappa)\stackrel{\mbox{\tiny Th \ref{thm:lavertoGS}}}{\Longrightarrow}  {\sf D}(\lambda,\kappa)\stackrel{\mbox{\tiny Th \ref{BMGSconnection}}}{\Longrightarrow} {\sf B}(\lambda,\kappa)
\]
This sequence of implications permits us to establish via Theorem \ref{thm:getilambdakappa} the consistency, modulo the consistency of the existence of certain large cardinals, of Conjecture \ref{galvin2}, and thus of Conjecture \ref{galvin1}. The proof of Theorem \ref{thm:getilambdakappa} appears in large part in the proof of Theorem 4 of \cite{GJM}. 

Towards the proof of Theorem \ref{thm:getilambdakappa} recall: For a filter $\mathcal{F}$ on a set $S$, $\mathcal{F}^{+}$ denotes the set $\{Y\subseteq S: (\forall X\in\mathcal{F})(X\cap Y\neq \emptyset)\}$.
The filter $\mathcal{F}$ on a cardinal $\kappa>\aleph_0$ is said to be \emph{normal} if for each regressive function $f$ on a set $X \in \mathcal{F}^{+}$ there is a set $Y\subseteq X$ such that $Y$ is in $\mathcal{F}^{+}$ and $f$ is constant on $Y$.
If a cardinal number $\kappa$ is measurable, then it carries a normal $\kappa$-complete nonprincipal ultrafilter. 

\begin{theorem}[Laver] \label{thm:getilambdakappa}
Let $\lambda$ be an infinite cardinal, and let $\kappa$ be a measurable cardinal with $\kappa>\lambda$. L\'evy collapse $\kappa$ to size $\lambda^{++}$. In the resulting model there is an ideal on $\kappa=\lambda^{++}$ witnessing ${\sf I}(\lambda,\kappa)$.
\end{theorem}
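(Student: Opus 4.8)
The plan is to run the standard Solovay--Galvin--Jech--Magidor analysis of the generic ultrapower obtained by L\'evy collapsing a measurable cardinal; this is exactly the setting of Theorem~4 of \cite{GJM}. Fix in $V$ a normal $\kappa$-complete nonprincipal ultrafilter $\mathcal U$ on $\kappa$ and let $j\colon V\to M\cong\mathrm{Ult}(V,\mathcal U)$ be the ultrapower, so that $\mathrm{crit}(j)=\kappa$, $\,{}^{\kappa}M\subseteq M$, $V_{\kappa+1}^{M}=V_{\kappa+1}$, and $\kappa<j(\kappa)<(2^{\kappa})^{+}$. Let $P=\Coll(\lambda^{+},{<}\kappa)$ and let $G$ be $P$-generic over $V$. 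Since $\kappa$ is inaccessible, $P$ is ${<}\lambda^{+}$-closed and $\kappa$-c.c., so in $V[G]$ the cardinals $\le\lambda^{+}$ and $\ge\kappa$ are preserved and $\kappa=\lambda^{++}$. Because $\mathrm{crit}(j)=\kappa>\lambda^{+}$ we get $j(\lambda^{+})=\lambda^{+}$ and $j(p)=p$ for every $p\in P$, so in $M$
\[
 j(P)=\Coll(\lambda^{+},{<}j(\kappa))^{M}=P\times Q,\qquad Q:=\Coll(\lambda^{+},[\kappa,j(\kappa)))^{M},
\]
where $Q$ is ${<}\lambda^{+}$-closed in $M$, hence (as $P$ adds no new ${\le}\lambda$-sequences and ${}^{\kappa}M\subseteq M$) also in $M[G]$ and in $V[G]$. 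Working in $V[G]$, define an ideal $J$ on $\kappa$ by declaring $X\in J$ iff $\mathbb 1_{Q}\Vdash^{M[G]}_{Q}\check\kappa\notin j(\dot X)$, where $\dot X\in V$ is any $P$-name with $\dot X_{G}=X$ and the $(P\times Q)$-name $j(\dot X)\in M$ is read off $G$ on its $P$-coordinate to become a $Q$-name over $M[G]$; independence of the choice of $\dot X$ follows from $j(p)=p$. Equivalently, $J$ is the ideal whose quotient forcing adds a $V[G]$-generic ultrafilter $W$ on $\kappa$ with $\mathrm{Ult}(V[G],W)=M[G][H]$ for $H$ an $M[G]$-generic filter on $Q$, along which $j$ lifts to $\hat\jmath\colon V[G]\to M[G][H]$.

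The engine of the argument — and the part that ``appears in large part in the proof of Theorem~4 of \cite{GJM}'' — is Solovay's factor lemma: the map $e\colon[X]_{J}\mapsto\lVert\check\kappa\in j(\dot X)\rVert_{Q}$ is an \emph{isomorphism} of $\mathcal P(\kappa)/J$ onto the Boolean completion $\mathrm{RO}(Q)$. Granting this, the clauses of $\textsf{I}(\lambda,\kappa)$ follow. That $J$ is a free proper ideal is immediate ($\kappa\in j(\kappa)$ while $\kappa\ne j(\alpha)=\alpha$ for $\alpha<\kappa$, and closure under subsets and finite unions is read off the definition). For clause~(1), $J$ is in fact $\kappa$-complete: if $\langle X_{\alpha}:\alpha<\delta\rangle\in V[G]$ with $\delta<\kappa$ and all $X_{\alpha}\in J$, then $j$ sends a $P$-name for the sequence to $\langle j(\dot X_{\alpha}):\alpha<\delta\rangle$ (as $\delta<\mathrm{crit}(j)$), so $\lVert\check\kappa\in j(\bigcup_{\alpha}\dot X_{\alpha})\rVert_{Q}=\bigvee_{\alpha<\delta}\lVert\check\kappa\in j(\dot X_{\alpha})\rVert_{Q}=0$; in particular $J$ is $\lambda^{+}$-complete and $\aleph_{1}$-complete.

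For clause~(3): $Q$ sits densely and ${<}\lambda^{+}$-closed (hence $\sigma$-closed) inside $\mathrm{RO}(Q)\cong\mathcal P(\kappa)/J$; let $\mathcal D$ be the corresponding dense $\sigma$-closed subset of $\mathcal P(\kappa)/J$ and set $\mathcal F=\{Y\subseteq\kappa:[Y]_{J}\in\mathcal D\}\subseteq J^{+}$. Then $\mathcal F$ is dense in $J^{+}$: given $X\in J^{+}$, refine $[X]_{J}$ to some $[Y]_{J}\in\mathcal D$ and replace $Y$ by $Y\cap X$. If $Y_{0}\supseteq Y_{1}\supseteq\cdots$ lie in $\mathcal F$, then $[Y_{0}]_{J}\ge[Y_{1}]_{J}\ge\cdots$ in $\mathcal D$ have a nonzero lower bound $[Y_{\omega}]_{J}\in\mathcal D$; each $Y_{\omega}\setminus Y_{n}\in J$, so $Y_{\omega}\setminus\bigcap_{n}Y_{n}\in J$ by $\aleph_{1}$-completeness, hence $\bigcap_{n}Y_{n}$ is $J$-positive and in particular nonempty. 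For clause~(2): given $X\in J^{+}$, by density pick $q_{*}\in Q$ with $e([X]_{J})\ge[q_{*}]$; the conditions of $Q$ extending $q_{*}$ that fix one further value of the generic collapsing map form a maximal antichain below $q_{*}$ of size $\ge\kappa$; enlarge it to a maximal antichain $A$ below $e([X]_{J})$, still of size $\ge\kappa>\lambda$, partition $A$ into $\lambda$ nonempty blocks, form the block-joins in $\mathrm{RO}(Q)$, pull them back through $e^{-1}$ to pairwise $J$-disjoint sets $Y_{i}\subseteq X$ $(i<\lambda)$ with $\bigcup_{i}Y_{i}=_{J}X$, disjointify using $\lambda^{+}$-completeness, and absorb the ($J$-null) remainder into $Y_{0}$: this partitions $X$ into $\lambda$ sets each in $J^{+}$.

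The crux — and essentially the only real difficulty — is surjectivity of $e$, i.e.\ that $\mathcal P(\kappa)/J$ is \emph{exactly} the tail forcing $\mathrm{RO}(Q)$ rather than merely densely embedded in it (injectivity and the homomorphism and $\kappa$-completeness properties of $e$ are as above). One cannot sidestep this by building an honest $M[G]$-generic $H\subseteq Q$ inside $V[G]$ and lifting $j$ literally: in $V[G]$ there are more than $\lambda^{+}$ — indeed $\lvert j(\kappa)\rvert^{+}$, uncollapsed by the $\kappa$-c.c.\ $P$ — dense subsets of $Q$ lying in $M[G]$, while $Q$ is only ${<}\lambda^{+}$-closed, so a naive meet-the-dense-sets recursion breaks down. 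Surjectivity instead requires showing that every $q\in Q$ is realized as $e([X]_{J})$ for some genuine $X\subseteq\kappa$ in $V[G]$ — equivalently, that every $M$-name for a subset of the critical point of a generic embedding is captured by a real subset of $\kappa$ — and this is where $\kappa$-c.c.\ of $P$, the closure ${}^{\kappa}M\subseteq M$, and normality of $\mathcal U$ do their work; this is the portion of the proof drawn from \cite{GJM}.
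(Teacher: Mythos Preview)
Your approach is correct but takes a genuinely different route from the paper's. You define the ideal via the lifted embedding and the tail forcing $Q=\Coll(\lambda^{+},[\kappa,j(\kappa)))^{M}$, and you deduce all three clauses of $\mathsf{I}(\lambda,\kappa)$ from the Boolean isomorphism $e:\mathcal{P}(\kappa)/J\cong\mathrm{RO}(Q)$: clause~(3) by pulling back the $\sigma$-closed dense subset $Q\subseteq\mathrm{RO}(Q)$, clause~(2) by pulling back large antichains. The paper instead works entirely with the concrete ideal $\mathcal{I}$ dual to the filter $\mathcal{H}$ generated in $V[G]$ by the ground-model ultrafilter $\mathcal{U}$, and never invokes the full Boolean isomorphism. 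Completeness (clause~(1)) is proved directly from $\lambda^{+}$-closure of $P$; normality of $\mathcal{H}$ is established by a direct maximal-antichain argument using the $\kappa$-c.c.; the dense family $\mathcal{F}$ in clause~(3) is simply $\mathcal{U}$ itself --- its density in $\mathcal{I}^{+}$ is exactly Lemma~2 of \cite{GJM}, and its closure under descending $\omega$-sequences is immediate since $P$ is $\lambda^{+}$-closed and hence adds no new $\omega$-sequences of ground-model sets; and clause~(2) is obtained from normality together with an elementary combinatorial lemma exploiting $\kappa=\lambda^{++}$ (fix injections $f_{\alpha}:\alpha\to\lambda^{+}$ for each $\alpha<\kappa$, and use pressing-down on a regressive function built from the $f_{\alpha}$ to locate $\kappa$-many, hence $\lambda$-many, disjoint positive pieces of any given $Z\in\mathcal{I}^{+}$). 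Your route is more structural and yields the stronger conclusion that the quotient algebra is exactly $\mathrm{RO}(Q)$, at the cost of requiring the surjectivity half of the factor lemma --- which you rightly flag as the crux but do not carry out; the paper's route is more elementary, needing from \cite{GJM} only the density of $\mathcal{U}$ in $\mathcal{I}^{+}$ rather than the full isomorphism, and getting the partition property by hand from normality rather than through the quotient-algebra machinery.
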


\begin{proof}
  In $V$, let $\mathcal{U}\subseteq\mathcal{P}(\kappa)$ be a normal 
  ultrafilter on $\kappa$. In the generic extension, the witnessing ideal ${\mathcal I}$ 
  will be the dual of the filter that is generated by the ground model normal ultrafilter $\mathcal{U}$.

Let ${\mathbb P}=\Coll(\lambda^+,<\!\kappa)$ be the L\'evy collapse. Then ${\mathbb P}$ has the $\kappa$-chain condition and is $\lambda^+$-closed. Let $G$ be a ${\mathbb P}$-generic filter over $V$. In $V\lbrack G\rbrack$, let 
 $${\mathcal H}=\{X\subseteq\kappa\colon\exists U\in\mathcal{U}\,(U\subseteq X)\} $$
be the filter generated by ${\mathcal U}$.

Lemmas \ref{Lcoll1} and \ref{Lcoll2} give the details of Lemma 1 of \cite{GJM}.
  
 \begin{lemma}\label{Lcoll1}
${\mathcal H}$ is $\lambda^+$-complete.
  \end{lemma}

\begin{proof}
Working in $V$, let $\dot{\mathcal H}$ be a ${\mathbb P}$-name for the filter ${\mathcal H}$ of the extension, and let $(\tau_{\alpha}\colon\alpha<\lambda)$ be a ${\mathbb P}$-name for a sequence of elements of ${\mathcal H}$.

Consider any $q\in\mathbb{P}$. For each $\alpha<\lambda$, choose a
$q_{\alpha}\in\mathbb{P}$ and a $U_{\alpha}\in\mathcal{U}$ such that
for $\alpha<\beta<\lambda$ we have $q_{\beta}<q_{\alpha} < q$ and
$q_{\alpha}\forces U_{\alpha}\subseteq \tau_{\alpha}$. Since
$\mathbb{P}$ is $\lambda^+$-closed, fix a $p\in{\mathbb P}$ such
that for all $\alpha<\lambda$ we have $p<q_{\alpha}$. Also put $U =
\bigcap_{\alpha<\lambda}U_{\alpha}$. Since $\mathcal{U}$ is $\kappa$-complete and $\kappa>\lambda$,  $U$ is a member of $\mathcal{U}$. Then
  \[
  p \forces \forall
  \alpha<\lambda\,(U\subseteq\tau_{\alpha})
  \]
  and therefore
  \[
  p \forces \bigcap_{\alpha<\lambda}\tau_{\alpha} \in \dot{\mathcal H}.
  \]
Thus, for each $q\in{\mathbb P}$ there is a $p<q$ in ${\mathbb P}$ that forces that $\dot{\mathcal{H}}$ is $\lambda^+$-complete. 
\end{proof}

Call a set $X\subseteq \kappa$ an $\mathcal{H}$-\emph{positive} set if for each $U\in \mathcal{U}$ we have $U\cap X\neq \emptyset$. 

\begin{lemma}\label{Lcoll2}
${\mathcal H}$ is normal.
\end{lemma}

\begin{proof}
Suppose otherwise. Then choose an ${\mathcal H}$-positive set $X\subseteq \kappa$ 
and a regressive function $f:X\to\kappa$ that is not constant on any ${\mathcal H}$-positive set. Then for each $\alpha<\kappa$ there is a $U\in{\mathcal U}$ such that $U\cap f^{-1}(\{\alpha\})=\emptyset$.
 
Choose a condition $q\in G$ that, in $V$, forces the above of ${\mathbb P}$-names $\dot X$ and $\dot f$ for $X$ and $f$. Work in $V$.
For each $\alpha<\kappa$, choose a maximal antichain
$ A_{\alpha}=\{p^{\alpha}_{\gamma}\colon\gamma<\beta\} \subseteq{\mathbb P} $
below $q$ such that there is a sequence 
$ (U^{\alpha}_{\gamma}\colon\gamma<\beta) $
of elements of $\mathcal{U}$ with the property that, for each $\gamma<\beta$, we have
\[
   p^{\alpha}_{\gamma} \forces U^{\alpha}_{\gamma}\cap  \dot{f}^{-1}(\{\alpha\}) = \emptyset.
\]
Since $\mathcal{U}$ is $\kappa$-complete and ${\mathbb P}$ has the $\kappa$-chain condition, the set $U_{\alpha} = \bigcap_{\gamma<\beta}U^{\alpha}_{\gamma}$ is a member of $\mathcal{U}$, and each member $p$ of $A_{\alpha}$ satisfies
 $ p\forces U_{\alpha}\cap\dot{f}^{-1}(\{\alpha\}) = \emptyset $
and, therefore, the same is forced by $q$.  

We may assume that for each $\alpha<\beta<\kappa$, the
relation $U_{\beta}\subseteq U_{\alpha}$ holds. By normality of
$\mathcal{U}$, the set $U = \Delta_{\alpha<\kappa}U_{\alpha}$
is in $\mathcal{U}$, so 
$ q\forces \dot X\cap U\neq \emptyset. $ 

Consider $\eta\in U$. For all $\alpha<\eta$, we have
$\eta\in U_{\alpha}$ and so 
$ q\forces\dot{f}(\eta)\ne \alpha. $
But this contradicts that $f$ is regressive.
\end{proof}

Now Lemma 2 of \cite{GJM} proves
\begin{lemma}\label{densefamily}
The ground model filter $\mathcal{U}\subseteq \mathcal{I}^+$ is a $\sigma$-complete dense family in $\mathcal{I}^+$.
\end{lemma}

Since $\kappa=\lambda^{++}$, fix for each $\alpha<\kappa$ an injection $f_{\alpha}:\alpha\to\lambda^+$. Let $\text{Z}$ be an element of ${\mathcal I}^+$ and, for each $\eta<\lambda^+$ and each $\mu<\kappa$, define
\[
   S^{\mu}_{\eta}=\{\alpha\in \text{Z}\colon f_{\alpha}^{-1}(\eta)=\mu\}. 
\]
\begin{lemma}\label{sizes2} 
For some $\eta<\lambda^+$, we have
  $\vert\{\mu<\kappa\colon S^{\mu}_{\eta}\in {\mathcal I}^{+}\}\vert = \kappa$.
\end{lemma}

\begin{proof}
  For $\eta<\lambda^+$ put $T_{\eta} = \{\mu<\kappa\colon S^{\mu}_{\eta} \in {\mathcal I}^{+}\}$. If for each such $\eta$ we have $\vert T_{\eta}\vert<\kappa$, then
  $\vert\bigcup_{\eta<\lambda^{+}}T_{\eta}\vert<\kappa$. Consider
  $\beta\in\kappa\setminus\bigcup_{\eta<\lambda^{+}}T_{\eta}$. Since $\text{Z}$ is a member of ${\mathcal I}^{+}$, it has cardinality $\kappa$. We may assume that each element of $\text{Z}$ is larger than $\max\{\beta,\lambda^+\}$ .

Consider an $\alpha\in \text{Z}$. Now $f_{\alpha}(\beta)$ is defined and less than
$\lambda^+<\alpha$. Thus, $f:\text{Z}\to\kappa$ defined by $f(\alpha)=f_{\alpha}(\beta)$ is a regressive function. 
Then there is an $\eta<\lambda^+$ such that $f^{-1}(\{\eta\})$ is in ${\mathcal I}^{+}$. But then, since $\beta=f_{\alpha}^{-1}(\{\eta\})$ for any $\alpha\in f^{-1}(\{\eta\})$, we have that $S^{\beta}_{\eta}$ is ${\mathcal H}$-positive, and $\beta\in T_{\eta}$, a contradiction. 
\end{proof}

It follows that the ideal ${\mathcal I}$ dual to ${\mathcal H}$ witnesses ${\sf I}(\lambda,\kappa)$: $\lambda^+$ completeness holds by Lemma \ref{Lcoll1}. That each ${\mathcal I}$-positive set can be partitioned into $\lambda$ many positive sets holds by Lemma \ref{sizes2}.  
Finally, ${\mathcal U}$ plays the role of ${\mathcal F}$ in the definition of ${\sf I}(\lambda,\kappa)$. To see this, note first that ${\mathcal U}$  is closed under decreasing $\omega$-sequences, since ${\mathbb P}$ is $\lambda^+$-closed in $V$.
\end{proof}

Here is a scenario under which Conjecture \ref{galvin2} would be true:
\begin{lemma}\label{scenario1}
Assume that there is a proper class {\sf K} of cardinal numbers such that for each $\lambda\in {\sf K}$ there is a cardinal number $f(\lambda)$ such that ${\sf I}(\lambda,f(\lambda))$ holds. Then Conjecture \ref{galvin2} is true.
\end{lemma}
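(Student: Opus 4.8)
The plan is to show that the hypothesis gives us, for a proper class of $\lambda$, a witness to ${\sf I}(\lambda,f(\lambda))$, and then chain the implications already established in the paper to conclude ${\sf D}(\lambda,f(\lambda))$, which is exactly the statement needed for Conjecture \ref{galvin2} at the cardinal $\lambda$. Concretely, recall the implication chain displayed just before the statement:
\[
  \textsf{I}(\lambda,\kappa) \Longrightarrow {\sf U}(\lambda,\kappa) \Longrightarrow {\sf L}(\lambda,\kappa) \Longrightarrow {\sf D}(\lambda,\kappa) \Longrightarrow {\sf B}(\lambda,\kappa),
\]
coming from Theorems \ref{magidor}, \ref{thm:ugtolaver}, \ref{thm:lavertoGS}, and \ref{BMGSconnection} respectively. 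So for each $\lambda\in{\sf K}$, applying this chain with $\kappa=f(\lambda)$ yields ${\sf D}(\lambda,f(\lambda))$.

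The remaining issue is purely one of cardinal bookkeeping: Conjecture \ref{galvin2} asks that \emph{for every} cardinal $\mu\ge 2$ there is some $\kappa\ge 1$ with ${\sf D}(\mu,\kappa)$, whereas the hypothesis only hands us ${\sf D}(\lambda,f(\lambda))$ for $\lambda$ ranging over the proper class ${\sf K}$. The key step, then, is a monotonicity observation in the first argument: if $2\le\mu\le\lambda$, then ${\sf D}(\lambda,\kappa)$ implies ${\sf D}(\mu,\kappa)$. This is immediate from the definitions, since a set $A\subseteq\,^{\omega}\mu$ is also a subset of $\,^{\omega}\lambda$, and a play of ${\sf GS}_{\mu}(A,\kappa)$ is just a play of ${\sf GS}_{\lambda}(A,\kappa)$ in which both players are further restricted to moves in $\,^{\kappa}\mu\subseteq\,^{\kappa}\lambda$; a winning strategy for either player in the larger game restricts to a winning strategy in the smaller game (for ONE, simply follow the strategy — since ONE only cares whether $A$ is hit, and ONE's moves stay in range; for TWO, one checks the restricted strategy never leaves $\,^{\kappa}\mu$ when ONE's moves do, which holds because TWO's responses can be taken to mimic an arbitrary fixed value off the relevant coordinates, or more simply because the winning condition does not constrain TWO's moves to any particular range). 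Then, given an arbitrary cardinal $\mu\ge 2$, since ${\sf K}$ is a proper class it is unbounded in the cardinals, so we may pick $\lambda\in{\sf K}$ with $\lambda\ge\mu$; set $\kappa=f(\lambda)$, obtain ${\sf D}(\lambda,\kappa)$ from the chain above, and conclude ${\sf D}(\mu,\kappa)$ by monotonicity. Hence for each $\mu\ge 2$ there is a $\kappa$ with ${\sf D}(\mu,\kappa)$, which is Conjecture \ref{galvin2}.

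The only genuinely delicate point I expect is verifying the monotonicity claim ${\sf D}(\lambda,\kappa)\Rightarrow{\sf D}(\mu,\kappa)$ for $\mu\le\lambda$ cleanly — in particular making sure the strategy-restriction argument goes through for both players and not just one. The cleanest route is probably to argue it separately: ONE has a winning strategy in ${\sf GS}_{\lambda}(A,\kappa)$ whose moves, played against TWO-moves lying in $\,^{\kappa}\mu$, may be assumed to lie in $\,^{\kappa}\mu$ as well (project onto $\mu$ via a fixed retraction $\lambda\to\mu$, which does not affect membership in $A\subseteq\,^{\omega}\mu$ on the relevant coordinates); and dually TWO has a winning strategy in ${\sf GS}_{\lambda}(A,\kappa)$ that likewise can be post-composed with such a retraction without changing the outcome on coordinates where ONE stays in range. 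Since ${\sf D}(\lambda,\kappa)$ gives determinacy, one of these two cases applies, yielding a winner for ${\sf GS}_{\mu}(A,\kappa)$. Everything else is a one-line invocation of the displayed implication chain together with the fact that a proper class of cardinals is unbounded.
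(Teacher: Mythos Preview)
Your overall architecture is correct: run the chain $\textsf{I}\Rightarrow{\sf U}\Rightarrow{\sf L}\Rightarrow{\sf D}$ and then use downward monotonicity in the first argument, together with the fact that a proper class is unbounded. However, the paper takes a shorter path to the monotonicity step: it observes directly that ${\sf I}(\lambda,\kappa)\Rightarrow{\sf I}(\lambda',\kappa)$ for $\lambda'\le\lambda$. This is immediate from the definition (a $\lambda^+$-complete ideal is $(\lambda')^+$-complete, a partition into $\lambda$ positive pieces yields one into $\lambda'$ positive pieces by merging, and condition (3) is independent of $\lambda$). One could equally well cite Lemma~\ref{monotone}, which already gives ${\sf U}(\lambda,\kappa)\Rightarrow{\sf U}(\lambda',\kappa)$ and feeds straight into the chain. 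Either of these avoids the need to argue about ${\sf D}$ at all.

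Your route through monotonicity of ${\sf D}$ is also valid, but your justification of the TWO case has a genuine gap. You write that ``the winning condition does not constrain TWO's moves to any particular range,'' suggesting that if TWO wins $\textsf{GS}_\lambda(A,\kappa)$ with some moves, TWO still wins after projecting those moves into $\mu$. This is backwards: for $A\subseteq{}^{\omega}\mu$, a TWO-move landing in $\lambda\setminus\mu$ on board $\xi$ \emph{guarantees} that the board-$\xi$ sequence lies outside $A$, so projecting such a move back into $\mu$ can only hurt TWO. Concretely, a winning strategy for TWO in $\textsf{GS}_\lambda(A,\kappa)$ need not project to a winning strategy in $\textsf{GS}_\mu(A,\kappa)$ when $A$ is used as-is. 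The clean fix is not to use $A$ itself in the $\lambda$-game but rather $A'=R^{-1}[A]$, where $R:{}^{\omega}\lambda\to{}^{\omega}\mu$ is induced by a fixed retraction $r:\lambda\to\mu$. With this choice, the retraction argument goes through symmetrically for both players: a $\sigma'$- or $\tau'$-play in $\textsf{GS}_\mu(A,\kappa)$ has the same $R$-image as the simulated $\sigma$- or $\tau$-play in $\textsf{GS}_\lambda(A',\kappa)$, and membership in $A$ is exactly membership of the $R$-image in $A$. So ${\sf D}(\lambda,\kappa)\Rightarrow{\sf D}(\mu,\kappa)$ does hold, but not for the reason you gave; and the paper's one-line monotonicity of ${\sf I}$ (or Lemma~\ref{monotone} for ${\sf U}$) sidesteps the issue entirely.
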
 
\begin{proof}
If $\lambda^{\prime}\le \lambda$ and $\kappa$ are cardinal numbers, then $\textsf{I}(\lambda,\kappa)$ implies $\textsf{I}(\lambda^{\prime},\kappa)$
\end{proof}

\begin{corollary}\label{congalvin1} 
If there is a proper class ${\sf K}$ of cardinal numbers such that for each $\lambda\in{\sf K}$ there is a cardinal number $f(\lambda)$ such that ${\sf I}(\lambda,f(\lambda))$ holds, then Conjecture \ref{galvin1} is true. 
\end{corollary}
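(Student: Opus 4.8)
The plan is straightforward: combine Corollary's hypothesis with the machinery already assembled in the paper. First I would invoke Lemma~\ref{scenario1}, whose hypothesis is literally the hypothesis of the corollary, to conclude that Conjecture~\ref{galvin2} holds. Then I would apply the chain of implications displayed just before Lemma~\ref{scenario1}, namely $\textsf{I}(\lambda,\kappa)\Rightarrow {\sf U}(\lambda,\kappa)\Rightarrow {\sf L}(\lambda,\kappa)\Rightarrow {\sf D}(\lambda,\kappa)\Rightarrow {\sf B}(\lambda,\kappa)$, but really the key inference at this point is Theorem~\ref{BMGSconnection}, which says ${\sf D}(\lambda,\kappa)\Rightarrow {\sf B}(\lambda,\kappa)$ for all $\lambda\ge 2$, $\kappa\ge 1$. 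Conjecture~\ref{galvin1} is exactly the assertion that for each $\lambda\ge 1$ there is a $\kappa\ge 1$ with ${\sf B}(\lambda,\kappa)$, and Conjecture~\ref{galvin2} is exactly the assertion that for each $\lambda\ge 2$ there is a $\kappa\ge 1$ with ${\sf D}(\lambda,\kappa)$, so the former follows from the latter by applying Theorem~\ref{BMGSconnection} to the witnessing $\kappa$.

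The one small gap to address is the boundary case $\lambda=1$ in Conjecture~\ref{galvin1}, which is not covered by Conjecture~\ref{galvin2} (that conjecture only ranges over $\lambda\ge 2$). But ${\sf B}(1,1)$ is trivially true: a space $X$ with $\pi(X)\le 1$ has a $\pi$-base consisting of a single nonempty open set, so every nonempty open set is dense, and then it is immediate that Black wins ${\sf BM}(X)$ — indeed Black can play anything, since any decreasing sequence of nonempty open sets from such a space has nonempty intersection because the unique basic open set is contained in all of them. (Alternatively one can simply note that for $\lambda=1$ the space $X$ has a dense point or is itself irreducible enough that the game is won trivially by Black.) So for $\lambda=1$ we take $\kappa=1$, and for $\lambda\ge 2$ we take the $\kappa$ furnished by Conjecture~\ref{galvin2} and invoke Theorem~\ref{BMGSconnection}.

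There is essentially no obstacle here: the corollary is a formal consequence of Lemma~\ref{scenario1} together with Theorem~\ref{BMGSconnection} (plus the trivial $\lambda=1$ case). The substance of the argument lives entirely in the earlier results — in particular Theorem~\ref{magidor}, Theorem~\ref{thm:ugtolaver}, Theorem~\ref{thm:lavertoGS}, and Theorem~\ref{BMGSconnection} — so the proof of the corollary itself is just a one- or two-line citation. I would therefore write it as follows.

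\begin{proof}
By Lemma~\ref{scenario1}, the hypothesis implies that Conjecture~\ref{galvin2} holds; that is, for every cardinal $\lambda\ge 2$ there is a cardinal $\kappa\ge 1$ with ${\sf D}(\lambda,\kappa)$. Fix $\lambda\ge 1$. If $\lambda=1$, then ${\sf B}(1,1)$ holds trivially: a space $X$ with $\pi(X)\le 1$ has a one-element $\pi$-base, so the unique basic open set is contained in every nonempty open set, whence every decreasing sequence of nonempty open subsets of $X$ has nonempty intersection and Black wins ${\sf BM}(X)$. If $\lambda\ge 2$, pick $\kappa\ge 1$ with ${\sf D}(\lambda,\kappa)$; by Theorem~\ref{BMGSconnection}, ${\sf B}(\lambda,\kappa)$ holds. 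In either case there is a $\kappa\ge 1$ with ${\sf B}(\lambda,\kappa)$, so Conjecture~\ref{galvin1} is true.
\end{proof}
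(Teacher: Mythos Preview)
Your proof is correct and follows exactly the paper's approach: the paper's entire proof is the single line ``By Theorem~\ref{BMGSconnection} and Lemma~\ref{scenario1}.'' Your version is more careful in that you explicitly handle the boundary case $\lambda=1$, which the paper simply passes over in silence.
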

\begin{proof} By Theorem \ref{BMGSconnection} and Lemma \ref{scenario1}. 
\end{proof}

\begin{theorem}[Laver]\label{getclassilambdakappa}
If it is consistent that there is a proper class of measurable cardinals, then it is consistent that there is a proper class of regular cardinals $\lambda$ such that ${\sf I}(\lambda,\lambda^{++})$ holds.
\end{theorem}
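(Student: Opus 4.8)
The plan is to iterate the single-cardinal construction of Theorem~\ref{thm:getilambdakappa} through the proper class of measurable cardinals, using a class-length Easton-style product (or iteration) of L\'evy collapses to turn each measurable into a witness for an instance of ${\sf I}(\lambda,\lambda^{++})$. Fix a model $V$ with a proper class of measurable cardinals; enumerate them in increasing order as $\langle\kappa_\alpha : \alpha\in\mathrm{Ord}\rangle$. The natural target is to arrange, in the final model, that for each $\alpha$ the cardinal $\lambda_\alpha := (\kappa_\alpha)^{++V[G]}$ satisfies $\lambda_\alpha^{++} = \kappa_{\alpha+1}$ or, more cautiously, to leave gaps so that between consecutive ``active'' stages nothing interferes; the key point is that the resulting class of regular cardinals $\lambda$ for which ${\sf I}(\lambda,\lambda^{++})$ holds must itself be proper. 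So the first step is to set up the iteration: at stage $\alpha$ force with $\Coll(\lambda_\alpha^+, <\kappa_{\alpha})$ where $\kappa_\alpha$ is the $\alpha$-th measurable and $\lambda_\alpha$ is (in the relevant intermediate model) the cardinal we want to be $\kappa_\alpha^{++}$'s predecessor's predecessor — I would in fact choose the bookkeeping so that $\lambda_\alpha$ is some fixed regular cardinal strictly below $\kappa_\alpha$ and above $\sup_{\beta<\alpha}\kappa_\beta$, e.g. $\lambda_\alpha = (\sup_{\beta<\alpha}\kappa_\beta)^+$ at limits and a successor of the previous measurable otherwise, with an Easton support so the product is tame.

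The second step is to verify that the tail of the iteration above stage $\alpha$ does not destroy the ${\sf I}(\lambda_\alpha,\kappa_\alpha)$ witness produced at stage $\alpha$. Here the factorization $\mathbb{P} = \mathbb{P}_{\le\alpha} * \dot{\mathbb{P}}_{>\alpha}$ is used together with two closure facts: $\dot{\mathbb{P}}_{>\alpha}$ is forced to be $(\kappa_\alpha)$-closed (its first nontrivial factor is a collapse at a cardinal $\ge\kappa_{\alpha+1} > \kappa_\alpha$, or more precisely the next active collapse uses closure above $\lambda_{\alpha+1}^+ > \kappa_\alpha$), so it adds no new subsets of $\kappa_\alpha$, hence preserves the ideal ${\mathcal I}_\alpha$, its $\lambda_\alpha^+$-completeness, the partition property from Lemma~\ref{sizes2}, and — crucially — the dense family ${\mathcal U}_\alpha$ together with its closure under descending $\omega$-sequences. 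Since $\kappa_\alpha = \lambda_\alpha^{++}$ is to be arranged in the final model, I also need the part of the iteration \emph{between} stage $\alpha$ and the next active stage to collapse $\kappa_\alpha$'s successors correctly; this is where the bookkeeping has to be checked, namely that $\Coll(\lambda_\alpha^+, <\kappa_\alpha)$ makes $\kappa_\alpha = \lambda_\alpha^{++}$ and nothing later re-adjusts cardinals in the interval $[\lambda_\alpha, \kappa_\alpha]$, because the next collapse only touches cardinals $\ge \lambda_{\alpha+1} > \kappa_\alpha$.

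The third step is the standard lifting/preservation argument showing that the stage-$\alpha$ construction still goes through over the intermediate model $V[G_{\le\alpha}]$ rather than over $V$ itself: one needs $\kappa_\alpha$ to still be measurable (in fact to carry a normal ultrafilter) after forcing with $\mathbb{P}_{<\alpha}$. This is where I would invoke that $\mathbb{P}_{<\alpha}$ has size $<\kappa_\alpha$ (by the Easton support and the fact that all earlier collapses live below $\kappa_\alpha$), so small forcing preserves the measurability of $\kappa_\alpha$ and its normal ultrafilter extends canonically (Levy–Solovay). Then Theorem~\ref{thm:getilambdakappa}, applied in $V[G_{<\alpha}]$ with the measurable $\kappa_\alpha$ and the cardinal $\lambda_\alpha$, produces in $V[G_{\le\alpha}]$ an ideal witnessing ${\sf I}(\lambda_\alpha,\kappa_\alpha)$ with $\kappa_\alpha = \lambda_\alpha^{++}$, and by step two this survives to $V[G]$.

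The main obstacle I anticipate is the bookkeeping and support choice for the class iteration: one must ensure simultaneously that (i) the full class product/iteration is definable and preserves ZFC (Easton-style support, so each initial segment is small relative to the next measurable and each tail is highly closed), (ii) the collapses at distinct measurables do not overlap in the cardinals they move, so that $\kappa_\alpha = \lambda_\alpha^{++}$ remains true in the final model and the map $\lambda_\alpha \mapsto \kappa_\alpha$ genuinely witnesses ${\sf I}(\lambda_\alpha,\lambda_\alpha^{++})$, and (iii) the class $\{\lambda_\alpha : \alpha\in\mathrm{Ord}\}$ is unbounded in the ordinals, which it is as long as we place an active stage cofinally often — automatic since there are class-many measurables and each $\lambda_\alpha < \kappa_\alpha$. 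Everything else is a routine reprise of Theorem~\ref{thm:getilambdakappa} together with elementary iterated-forcing preservation lemmas; I would cite the proof of Theorem~4 of~\cite{GJM}, which as the authors remark already contains most of the argument, and simply indicate the class-iteration wrapper.
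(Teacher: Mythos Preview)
Your proposal is correct and follows essentially the same approach as the paper, which gives only a three-sentence outline attributed to Laver: an Easton-support class iteration of L\'evy collapses, with L\'evy--Solovay ensuring each $\kappa_\alpha$ is still measurable when its turn comes and tail closure preserving the stage-$\alpha$ witness. The one point the paper makes explicit that you should add is to first thin the measurables to a proper class \emph{none of which is a limit of the others}; without this your choice $\lambda_\alpha=(\sup_{\beta<\alpha}\kappa_\beta)^+$ can fail to lie strictly below $\kappa_\alpha$, and the factorization of the iteration at stage $\alpha$ into a small part below and a closed part above breaks down.
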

\begin{proof}  Here is an outline of the argument given by Laver in a personal communication dated May 2, 1995: ``Start with a class of measurable cardinals, none of which is a limit of the others. Collapse them by upwards Easton forcing to be the successors of regular cardinals. Note that the next measurable cardinal is still measurable when it is time to collapse it." We leave the details of the construction to the reader.
\end{proof}

As a result we find
\begin{corollary}\label{congalvin2} If it is consistent that there is a proper class of measurable cardinals, then it is consistent that Conjecture \ref{galvin2} holds.
\end{corollary}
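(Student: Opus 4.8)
The plan is to derive Corollary \ref{congalvin2} by stringing together the results already assembled in the paper. Starting from the consistency of a proper class of measurable cardinals, Theorem \ref{getclassilambdakappa} yields a model of {\sf ZFC} containing a proper class of regular cardinals $\lambda$ for which ${\sf I}(\lambda,\lambda^{++})$ holds. In that model, set ${\sf K}$ to be this proper class of regular $\lambda$, and define $f(\lambda)=\lambda^{++}$; then the hypothesis of Lemma \ref{scenario1} (equivalently Corollary \ref{congalvin1}) is satisfied verbatim, so Conjecture \ref{galvin2} holds in that model. Since the model was obtained assuming only $\mathrm{Con}({\sf ZFC}+\text{proper class of measurables})$, we conclude that $\mathrm{Con}({\sf ZFC}+\text{proper class of measurables})$ implies $\mathrm{Con}({\sf ZFC}+\text{Conjecture \ref{galvin2}})$, which is exactly the statement of the corollary.

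The only point requiring a word of care is that Conjecture \ref{galvin2} quantifies over \emph{all} cardinals $\lambda\ge 2$, whereas ${\sf K}$ consists only of certain (large, regular) cardinals; I would note that this gap is closed by the monotonicity built into ${\sf I}$. Concretely, Lemma \ref{scenario1} already records the implication ${\sf I}(\lambda,\kappa)\Rightarrow {\sf I}(\lambda',\kappa)$ for $\lambda'\le\lambda$: given an arbitrary cardinal $\mu\ge 2$, choose any $\lambda\in{\sf K}$ with $\lambda\ge\mu$ (possible since ${\sf K}$ is a proper class), whence ${\sf I}(\lambda,\lambda^{++})$ gives ${\sf I}(\mu,\lambda^{++})$, and then the chain of implications ${\sf I}(\mu,\lambda^{++})\Rightarrow{\sf U}(\mu,\lambda^{++})\Rightarrow{\sf L}(\mu,\lambda^{++})\Rightarrow{\sf D}(\mu,\lambda^{++})$ (Theorems \ref{magidor}, \ref{thm:ugtolaver}, \ref{thm:lavertoGS}) furnishes a $\kappa=\lambda^{++}$ witnessing ${\sf D}(\mu,\kappa)$. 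This is precisely the content packaged into Lemma \ref{scenario1}, so in the write-up I would simply invoke that lemma rather than rederiving the chain.

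I do not expect a genuine obstacle here, since all the mathematical work has been done in Theorems \ref{getclassilambdakappa}, \ref{magidor}, \ref{thm:ugtolaver}, \ref{thm:lavertoGS}, \ref{BMGSconnection} and Lemma \ref{scenario1}; the corollary is a bookkeeping assembly. If anything deserves emphasis, it is the (entirely standard) meta-theoretic step that a consistency statement about a class-forcing extension transfers back to a consistency statement over the ground assumption --- i.e.\ that ``it is consistent that there is a proper class of measurable cardinals'' is being used as the hypothesis of a relative consistency result, with Theorem \ref{getclassilambdakappa} supplying the model and Corollary \ref{congalvin1}/Lemma \ref{scenario1} supplying the verification that Conjecture \ref{galvin2} holds there. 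So the proof is just: apply Theorem \ref{getclassilambdakappa} to get the model, apply Lemma \ref{scenario1} (with $f(\lambda)=\lambda^{++}$) inside it to get Conjecture \ref{galvin2}, and read off the relative consistency conclusion.
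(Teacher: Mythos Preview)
Your proposal is correct and follows exactly the route the paper intends: the corollary is an immediate consequence of Theorem \ref{getclassilambdakappa} together with Lemma \ref{scenario1} (taking ${\sf K}$ to be the proper class of regular $\lambda$ produced by the theorem and $f(\lambda)=\lambda^{++}$), and indeed the paper states the corollary without a separate proof. Your additional remarks on monotonicity simply unpack the one-line proof of Lemma \ref{scenario1}, so there is nothing to add or correct.
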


\begin{corollary}\label{congalvin1} If it is consistent that there is a proper class of measurable cardinals, then it is consistent that Conjecture \ref{galvin1} holds.
\end{corollary}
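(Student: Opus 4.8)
The plan is short: deduce this from the already-established consistency of Conjecture \ref{galvin2} (Corollary \ref{congalvin2}) together with the fact, proved in Section 1, that Conjecture \ref{galvin2} implies Conjecture \ref{galvin1}. Since both of these are in hand, the proof is essentially a two-line citation; the only real work is to spell out the implication between the two conjectures and to phrase everything at the level of theories.

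First I would record the reduction ``Conjecture \ref{galvin2} $\Rightarrow$ Conjecture \ref{galvin1}'', which holds in {\sf ZFC}. Let $\lambda \ge 1$ be arbitrary. If $\lambda \ge 2$, Conjecture \ref{galvin2} supplies some $\kappa \ge 1$ with ${\sf D}(\lambda,\kappa)$, and Theorem \ref{BMGSconnection} then gives ${\sf B}(\lambda,\kappa)$. If $\lambda = 1$, one has ${\sf B}(1,\kappa)$ as soon as ${\sf B}(2,\kappa)$ holds, since the hypothesis $\pi(X)\le 1$ is stronger than $\pi(X)\le 2$ (more generally, ${\sf B}(\lambda',\kappa)$ implies ${\sf B}(\lambda,\kappa)$ whenever $\lambda\le\lambda'$). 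Hence for every $\lambda \ge 1$ there is a $\kappa \ge 1$ with ${\sf B}(\lambda,\kappa)$, which is exactly Conjecture \ref{galvin1}.

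Now suppose {\sf ZFC} together with the existence of a proper class of measurable cardinals is consistent. By Corollary \ref{congalvin2} there is then a model of {\sf ZFC} in which Conjecture \ref{galvin2} holds; by the previous paragraph Conjecture \ref{galvin1} holds in that model as well. Therefore it is consistent with {\sf ZFC} that Conjecture \ref{galvin1} holds. One can equivalently route the argument directly through the body of the paper: Theorem \ref{getclassilambdakappa} produces a model carrying a proper class ${\sf K}$ of regular cardinals $\lambda$ for which ${\sf I}(\lambda,\lambda^{++})$ holds, and then the chain ${\sf I}(\lambda,\kappa) \Rightarrow {\sf U}(\lambda,\kappa) \Rightarrow {\sf L}(\lambda,\kappa) \Rightarrow {\sf D}(\lambda,\kappa) \Rightarrow {\sf B}(\lambda,\kappa)$, together with the monotonicity ${\sf I}(\lambda,\kappa) \Rightarrow {\sf I}(\lambda',\kappa)$ for $\lambda'\le\lambda$ and the cofinality of ${\sf K}$ in the cardinals, yields ${\sf B}(\lambda,\kappa)$ for every $\lambda$.

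I do not expect any genuine obstacle here, since every ingredient is already assembled in the paper; the single point requiring care is the trivial bookkeeping around the degenerate value $\lambda = 1$ (and, if one insists on the arithmetic of $\pi$-weights, finite $\lambda$) in the reduction of Conjecture \ref{galvin1} to Conjecture \ref{galvin2}, and keeping the conclusion stated correctly as a relative consistency statement rather than as an outright theorem of {\sf ZFC}.
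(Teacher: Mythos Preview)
Your proposal is correct and essentially matches the paper's approach. The paper gives no proof block for this corollary, but the intended argument is exactly the one you spell out: combine Theorem \ref{getclassilambdakappa} with the earlier corollary (that a proper class of $\lambda$ with ${\sf I}(\lambda,f(\lambda))$ yields Conjecture \ref{galvin1}, proved ``by Theorem \ref{BMGSconnection} and Lemma \ref{scenario1}''), or equivalently cite Corollary \ref{congalvin2} together with the implication ${\sf D}(\lambda,\kappa)\Rightarrow{\sf B}(\lambda,\kappa)$.
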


\section{Examples of multiboard games}

In this section we give in {\sf ZFC} a number of \emph{ad hoc} illustrations of the phenomenon that ONE has a winning strategy in a multiboard version of an undetermined Gale--Stewart game.

\begin{theorem} \label{granthamJul14} 
  Let $0< \kappa\le \aleph_0$ be a cardinal number.
  \begin{enumerate}
  \item[\emph{(1)}]
      There is a set $A_{\kappa}\subseteq {}^{\omega}2$ such that
      ${\sf GS}_2(A_{\kappa},n)$ is undetermined for $0<n<\kappa$, while ONE
      has a winning strategy in ${\sf GS}_2(A_{\kappa},\kappa)$.
  \item[\emph{(2)}]
      There is a set $A_{\kappa}\subseteq {}^{\omega}\omega$ such
      that ${\sf GS}_{\omega}(A_{\kappa},n)$ is undetermined for
      $0<n<\kappa$, while ONE has a winning strategy in
      ${\sf GS}_{\omega}(A_{\kappa},\omega)$ which ensures that TWO wins on
      fewer than $\kappa$ boards. (Hence ONE has a winning strategy in ${\sf GS}_\omega(A_\kappa,\kappa)$.)
  \end{enumerate}
\end{theorem}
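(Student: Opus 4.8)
The idea is to build the sets $A_\kappa$ by hand, exploiting the fact that on $\kappa$ boards ONE gets to distribute $\kappa$-many independent parallel plays, while on fewer than $\kappa$ boards neither player can force a win. For part (1) with $\kappa=\aleph_0$ (the finite cases $\kappa=n$ being handled by a finite analogue), the plan is to reserve board $i$ for coding a ``challenge'' to TWO: fix a bijective bookkeeping scheme between $\omega$ and $\omega\times\omega$, and on the $i$-th board let ONE play so that the sequence produced there lands in $A_\kappa$ precisely when TWO's moves on that board fail to diagonalize against the $i$-th candidate strategy (or, more simply, when TWO's moves on board $i$ do not eventually dominate / match a prescribed target determined by ONE's own moves). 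Concretely, I would let $A_\kappa = \{p\in{}^\omega 2 : $ the odd-indexed subsequence of $p$ equals the even-indexed subsequence of $p$ shifted, i.e.\ TWO ``copied'' ONE$\}$ or a similar ``TWO must imitate ONE'' condition; then on $\kappa$ boards ONE plays the $i$-th board so as to threaten this condition while using the remaining boards to absorb whatever TWO does, and a counting argument shows TWO cannot simultaneously escape on all boards. The finite-undeterminacy clause is arranged by making $A_\kappa$, restricted to the possible behaviors on $n<\kappa$ boards, coincide with a known undetermined set (e.g.\ a Bernstein-type set in ${}^\omega 2$), which is possible because the winning condition for ONE on $n$ boards is a ``diagonal-free'' projection of $A_\kappa$.

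For part (2) the ambient alphabet is $\omega$ rather than $2$, which gives ONE much more room: the target is a winning strategy for ONE in ${\sf GS}_\omega(A_\kappa,\omega)$ that makes TWO lose on all but fewer than $\kappa$ boards. The natural device is for ONE to use board $n$ to ``test'' a fixed enumeration of potential counter-strategies or of potential intersection points, coding on board $n$ (via ONE's moves, which range over all of $\omega$) an instruction that TWO can satisfy on at most one or boundedly many boards. A clean way: let $A_\kappa$ be $\{p : \exists k\ \forall m\ p(2m+1) = p(2m) \text{ or a tail condition involving } k\}$, designed so that ``TWO wins on board $n$'' forces TWO's play there into a small, rapidly-shrinking set; ONE then plays the $n$-th board to aim at a distinct such set for each $n$, and a pigeonhole/regressive-function argument bounds by $<\kappa$ the number of boards on which TWO can succeed. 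The parenthetical consequence ``ONE has a winning strategy in ${\sf GS}_\omega(A_\kappa,\kappa)$'' is then immediate, since winning on all but $<\kappa$ of $\kappa$-many boards in particular means winning on at least one.

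\textbf{Main obstacle.} The delicate point is getting \emph{both} halves of each clause simultaneously: $A_\kappa$ must be rich enough that ONE's $\kappa$-board strategy genuinely wins (no escape for TWO on all boards), yet meager enough that ${\sf GS}_2(A_\kappa,n)$ (resp.\ ${\sf GS}_\omega(A_\kappa,n)$) stays undetermined for every $0<n<\kappa$ — in particular ONE must \emph{not} already win on $n<\kappa$ boards, and TWO must not win either. I expect the undeterminacy-for-small-$n$ requirement to be the real work: it likely forces $A_\kappa$ to be built by a transfinite recursion of length continuum (a Bernstein/Mycielski-style construction) in which, at each stage, one kills a candidate strategy for ONE on $n$ boards and a candidate strategy for TWO on $n$ boards, while carefully preserving the combinatorial skeleton that makes the explicit $\kappa$-board strategy work. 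I would therefore organize the proof as: (i) fix the explicit $\kappa$-board winning strategy for ONE and identify exactly which membership facts about $A_\kappa$ it needs; (ii) phrase those as ``positive constraints'' to be respected; (iii) run the Bernstein recursion to defeat all small-$n$ strategies subject to the positive constraints, checking at each step that the constraints leave continuum-many free choices; (iv) verify the finite-board undeterminacy and, for (2), the ``fewer than $\kappa$ boards'' refinement via the pigeonhole argument. The one genuinely subtle verification is that step (iii)'s recursion never paints itself into a corner — i.e.\ that the positive constraints from step (i) are ``thin'' (of size $<2^{\aleph_0}$ at each stage), which should follow from the strategy in step (i) being determined by only countably much data per board.
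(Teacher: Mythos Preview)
Your high-level architecture is right --- fix an explicit strategy for ONE on $\kappa$ boards, then run a transfinite recursion of length $2^{\aleph_0}$ to defeat candidate strategies on fewer boards --- but the proposal is missing the combinatorial gadget that makes the two halves compatible, and the specific suggestions (``TWO must copy ONE'', etc.) are too crude to carry it. The difficulty you underestimate: for ONE's fixed $\kappa$-board strategy to win, \emph{every} one of the $2^{\aleph_0}$ possible TWO-responses must land at least one of its $\kappa\le\aleph_0$ board-plays inside $A_\kappa$. That is continuum-many disjunctive positive constraints, each with only countably many disjuncts; your ``thin at each stage'' remark does not explain why $2^{\aleph_0}$ stages of removals (needed to kill ONE's $n$-board strategies) leave every such disjunction satisfied. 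You also propose diagonalizing against TWO's strategies, which forces points \emph{into} $A_\kappa$ and creates further conflicts with the removals.

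The paper avoids all of this by an indirection. For $x\in{}^\omega\omega$ set $B(x)=\{x_{2n}+x_{2n+1}:x_{2n}>0\}$, and for $\mathcal{B}\subseteq\mathcal{P}(\omega)$ take $A_\kappa=Y(\mathcal{B})=\{x:B(x)\notin\mathcal{B}\}$. Call infinite $A,B\subseteq\omega$ \emph{interlaced} if their elements eventually alternate. ONE has a fixed strategy on $\omega$ boards, not depending on $\mathcal{B}$, that forces the sets $B^F_i$ on distinct boards to be pairwise interlaced; hence if $\mathcal{B}$ contains no $\kappa$ pairwise-interlaced sets, fewer than $\kappa$ of the $B^F_i$ can lie in $\mathcal{B}$, and ONE wins on all but $<\kappa$ boards. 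This structural invariant on $\mathcal{B}$ simultaneously delivers ONE's $\kappa$-board win \emph{and} shows TWO has no single-board winning strategy (else play it on every board), so no diagonalization against TWO is needed at all. The recursion then only \emph{enlarges} $\mathcal{B}$ --- equivalently, shrinks $A_\kappa$ --- by adding, for each strategy $\sigma$ for ONE on $n<\kappa$ boards, at most $n$ sets witnessing a $\sigma$-play lost by ONE, chosen (via a further ``non-interlacing'' lemma) so that sets added at distinct stages are never interlaced with one another. The monotone, one-directional nature of the recursion, together with the interlacing invariant replacing pointwise membership constraints, is exactly what makes the bookkeeping go through; your framework has no analogue of this.
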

\begin{proof}
  As the proofs of (1) and (2) are similar, we prove only $(2)$. \\
  \underline{Proof of (2):}\\
 For $x=(x_n\colon n<\omega)$ in $^{\omega}\omega$, define $B(x) := \{x_{2n}+x_{2n+1}\colon x_{2n}>0\}.$
 For each $\mathcal{B}\subseteq \mathcal{P}(\omega)$, define 
 $Y(\mathcal{B}) :=\{x\in\,^{\omega}\omega\colon B(x)\not\in \mathcal{B}\}$. 

The set $A_{\kappa}$ used in the proof of (2) is, for an appropriately chosen $\mathcal{B}$, of the form $Y(\mathcal{B})$. In preparation for the construction of $A_{\kappa}$ we now introduce the following concepts:

For two infinite subsets $A$ and $B$ of $\omega$ we say that $A$ \emph{is interlaced with } $B$
if for some $k\in\omega$ we have
$$a_0< b_0< a_1< b_1<\cdots< a_n< b_n<\cdots$$
where $(a_n:n<\omega)$ and $(b_n:n<\omega)$ are the increasing enumerations of $A\setminus k$ and $B\setminus k$.
We write $I(A,B)$ to denote that $A$ is interlaced with $B$. Observe that this is a symmetric binary relation on the power set of $\omega$. 
For $A\subseteq \omega$, define 
 \[
    I(A) := \{B\subseteq\omega\colon I(A,B)\}. 
 \] 
 Note that if $A$ is finite then $I(A)$ is empty. Also note that if $I(A)$ is nonempty, then it has at least $\aleph_0$ elements. For example, if $A$ is the set of even numbers then $I(A)$ is countable.  If
$A=\{a_n:n\in\omega\}$ where $a_0< a_1<\cdots< a_n<\cdots$, and if
$a_{n+1}-a_n>1$ for all but finitely many $n$ and $a_{n+1}-a_n>2$ for
infinitely many $n$, then $I(A)$ has $2^{\aleph_0}$ elements.

For an ordinal $\alpha\le \omega$, let $G^{\alpha}$ be the following game structure of length $\omega$ between players ONE and TWO. First ONE 
chooses an $f_0\in{}^{\alpha}\omega$, then TWO  
chooses an $f_1\in{}^{\alpha}\omega$, then ONE  
chooses an $f_2\in{}^{\alpha}\omega$, then TWO  
chooses an $f_3\in{}^{\alpha}\omega$, and so on.

For a play $F = \langle f_n\colon n<\omega\rangle \in \,{}^{\omega}({}^{\alpha}\omega)$ of $G^{\alpha}$, define for each $i\in \alpha$:
  \[
  B^F_i := B(\, ( f_0(i), f_1(i), f_2(i), \cdots)\,) \; = \{f_{2n}(i)+f_{2n+1}(i)\colon f_{2n}(i)>0\}.
  \]

\begin{lemma} \label{lemma:sigma} 
There is a strategy $\sigma$ for ONE  
in $G^{\omega}$ such that, if $F$ is a $\sigma$-play of $G^{\omega}$, then $I(B^F_i,B^F_j)$ holds for all $i,j\in \omega$ with
  $i\neq j$.
\end{lemma}
\begin{proof}
Let $(j_n:n<\omega)=(0,1,0,1,2,0,1,2,3,0,1,2,3,4,0,\dots)$. ONE's 
strategy is to choose $f_{2n}$ so that $f_{2n}(i)=0$ for $i\ne j_n$, while $f_{2n}(j_n)=f_{2n-2}(j_{n-1})+f_{2n-1}(j_{n-1})+1$ if $n> 0$, and $f_0(j_0)=1$.


\end{proof}

\begin{lemma} \label{lemma:ij} 
Suppose that $0<\kappa\le\omega$. If  $\mathcal{B}\subseteq\mathcal{P}(\omega)$ is such that for any sequence $(B_i:i\in\kappa)$ of elements of $\mathcal{B}$ there are $i<j<\kappa$ for which $I(B_i,B_j)$ fails, then there is a strategy $\sigma$ for ONE  
in ${\sf GS}_{\omega}(Y(\mathcal{B}),\kappa)$ which ensures that TWO  
wins on fewer than $\kappa$ boards. 
\end{lemma}
\begin{proof}
Let $\kappa$ and $\mathcal{B}$ be as in the hypotheses. Let $\sigma$ be the strategy from Lemma \ref{lemma:sigma} for ONE.  
We show that $\sigma$ has the claimed property. For let $F$ be a $\sigma$-play. Consider the sequence $(B^F_i:i<\omega)$. For any $i\neq j$ we have $I(B^F_i,B^F_j)$. Thus, as $\kappa\le\omega$, for any $J\subseteq\omega$ with $\vert J\vert = \kappa$, considering $(B^F_i:i\in J)$, there is a $B^F_i$ not in $\mathcal{B}$, meaning $x\in Y(\mathcal{B})$ where $B_i^F = B(x)$.  In particular, fewer than $\kappa$ of the sets $B^F_i$ are members of $\mathcal{B}$ and thus fewer than $\kappa$ of the corresponding sequences are not members of $Y(\mathcal{B})$. It follows that if ONE  
plays the game ${\sf GS}_{\omega}(Y(\mathcal{B}),\kappa)$ according to strategy $\sigma$, then TWO  
wins on fewer than $\kappa$ boards.
\end{proof}

It follows that for $\mathcal{B}$ as in Lemma \ref{lemma:ij}, TWO 
does not have a winning strategy in ${\sf GS}_{\omega}(Y(\mathcal{B}))$. 
Therefore the games ${\sf GS}_{\omega}(Y(\mathcal{B}),n)$ for $0<n<\kappa$, if they are not wins for ONE, are undetermined.

Towards the rest of the proof of Theorem \ref{granthamJul14} (2) we now define:
For $A,\, B\subseteq \omega$ we say that $D(A,B)$ holds if either of $A$ or $B$ is finite, or else there are for each $k\in\omega$ elements $a_1 < a_2 < a_3$ of $A\setminus k$ for which  there is no element $b$ of $B$ such that $a_1 < b < a_3$. 
We obviously have:
\begin{lemma} \label{lemma:dab}
If $D(A,B)$ holds, then $I(A)\cap I(B)
  = \emptyset$.\qed
\end{lemma}

\begin{lemma} \label{lemma:emb}  
Let $0<n<\omega$ as well as a strategy $\sigma$ for ONE in $G^n$ be given. For each $t\in\,^{\omega}2$ there is a $\sigma$-play $F_t$ such that if $s,t\in{}^{\omega}2$ are different, then for all $i, j\in n$, $D(B^{F_s}_i,B^{F_t}_j)$ holds. \qed
\end{lemma}   
The proof of Lemma \ref{lemma:emb} is left to the reader.

\begin{lemma} \label{lemma:mbf} 
Suppose
  $\mathcal{A}\subseteq{\mathcal{P}}(\omega)$,
  $\vert\mathcal{A}\vert<2^{\aleph_0}$.  If $0<n<\omega$ and $\sigma$ is
  a strategy for ONE
  in $G^n$, then there is a $\sigma$-play $F$ of
  $G^n$ such that $I(B^F_i)\cap\mathcal{A} =\emptyset$ for all $i\in
  n$.
\end{lemma}

\begin{proof}
  Lemmas \ref{lemma:dab} and \ref{lemma:emb}.
\end{proof}

\begin{lemma} \label{lemma:key} Suppose
  $\mathcal{A}\subseteq{\mathcal{P}}(\omega)$,
  $\vert\mathcal{A}\vert<2^{\aleph_0}$.  If $0<n<\omega$ and $\sigma$ is
  a strategy for ONE 
  in $G^n$, then there is a
  $\mathcal{B}\subseteq\mathcal{P}(\omega)$ such that
  $\vert\mathcal{B}\vert \le n$, $I(B)\cap\mathcal{A}=\emptyset$ for
  all $B\in\mathcal{B}$, and $\sigma$ is not a winning strategy for ONE 
  in ${\sf GS}_{\omega}(Y(\mathcal{B}),n)$.
\end{lemma}

\begin{proof}
  Choose $F$ as in Lemma \ref{lemma:mbf} and let $\mathcal{B} =
  \{B^F_i\colon i\in n\}$.
\end{proof}

Now we can complete the proof of item (2) of Theorem \ref{granthamJul14}: \\
Let $ \{(n_{\xi},\sigma_{\xi})\colon \xi<2^{\aleph_0}\}$ enumerate the set
 $$\{\langle n,\sigma\rangle\colon 0<n<\omega \mbox{ and $\sigma$ a strategy for ONE 
 in $G^n$}\}. $$ 
 Using Lemma \ref{lemma:key}, construct a sequence
 $(\mathcal{B}_{\xi}\colon \xi<2^{\aleph_0})$ so that:
 \begin{itemize}
 \item{$\mathcal{B}_{\xi}\subseteq\mathcal{P}(\omega)$;}
 \item{ $\vert\mathcal{B}_{\xi}\vert \le n_{\xi}$;}
 \item{ $I(B_1,B_2)$ does not hold if $B_1\in\mathcal{B}_{\xi_1}$, $B_2 \in\mathcal{B}_{\xi_2}$, and $\xi_1\neq \xi_2$, and}
\item{$\sigma_{\xi}$ is not a winning strategy for ONE
in ${\sf GS}_{\omega}(Y(\mathcal{B}_{\xi}),n_\xi)$.}
 \end{itemize}
  For $0<\kappa\le \omega$ let
 \[
   A_{\kappa}=Y\bigl(\bigcup\{\mathcal{B}_{\xi}\colon n_{\xi}<\kappa\}\bigr). 
\]
Clearly, for $0<n<\kappa$, ONE cannot have a winning strategy in ${\sf GS}_{\omega}(A_{\kappa},n)$, and the rest follows from Lemma \ref{lemma:ij}.
\end{proof}

Consider next the Morton Davis game ${\sf G}^*(Y)$, see Davis \cite{davis}: In inning $n <\omega$ ONE first chooses a finite sequence $o_n$ of zeroes and ones; TWO
responds with $t_n\in\{0,\,1\}$. ONE wins a play if
\[
o_1{}^\frown (t_1){}^\frown o_2{}^\frown (t_2){}^\frown\cdots{}^\frown
o_n{}^\frown (t_n){}^\frown\cdots \in Y;
\]
else, TWO wins.  Evidently this can be coded as a Gale-Stewart game
${\sf GS}_{\omega}(S)$ for some $S\subseteq {}^{\omega}\omega$.

\begin{theorem}[Davis]\label{mdavis} 
  Let $Y\subseteq{}^{\omega}\{0,1\}$ be given. Then TWO has a winning strategy in ${\sf G}^*(Y)$ if, and only if,  $Y$ is countable, and ONE has
  a winning strategy in ${\sf G}^*(Y)$ if, and only if, $Y$ contains a nonempty
  perfect subset.\qed
\end{theorem}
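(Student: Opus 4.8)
The statement is the classical Morton Davis theorem on the $*$-game, which is essentially a Banach–Mazur-style game in which ONE plays finite blocks and TWO plays single bits. The plan is to prove two biconditionals separately, in each case giving an explicit winning strategy for one player whenever the relevant set-theoretic condition holds, and deriving a topological contradiction otherwise.

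\emph{The countable/TWO equivalence.} For the easy direction, suppose $Y$ is countable, say $Y=\{y_k:k<\omega\}$. I would have TWO use a diagonalization strategy: having seen ONE's block $o_n$, TWO picks $t_n$ so as to guarantee that the eventual play differs from $y_n$ at the coordinate that has just been determined. Concretely, after $o_1{}^\frown(t_1){}^\frown\cdots{}^\frown o_n$ has been played, let $m$ be the length of this sequence; TWO sets $t_n=1-y_n(m)$ (where $m$ is the coordinate TWO's move occupies). Then the resulting play disagrees with every $y_n$, so it lies outside $Y$ and TWO wins. For the converse, suppose TWO has a winning strategy $\tau$; I want to show $Y$ is countable. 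The set of partial plays consistent with $\tau$ forms a tree; since at each of ONE's moves there are only countably many choices of finite block (there are $\aleph_0$ finite $0$–$1$ sequences) and TWO's responses are determined by $\tau$, this tree is countably branching. Each infinite branch through it is a play, hence lies outside $Y$ by the winning property; but \emph{every} point of $Y$ must arise as such a branch, because if $y\in Y$ then ONE could play so as to approach $y$ (feeding blocks that extend the current position toward $y$ while respecting TWO's forced bits) — wait, this needs care, since TWO's bits might block ONE from reaching $y$. The correct argument: consider the map sending each branch to its limit; the image is a countable set (countably branching tree has countably many branches... no, $2^{\aleph_0}$ many). I would instead argue directly that $Y$ is contained in a countable union of nowhere dense sets of a specific form, or better, mimic the Banach–Mazur/Oxtoby analysis: if $Y$ were uncountable it would have a condensation point, and ONE could exploit a perfect-set kernel — which dovetails into the second equivalence.

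\emph{The perfect-set/ONE equivalence.} If $Y$ contains a nonempty perfect set $P$, ONE wins by a fusion strategy: ONE maintains that the current partial play is the stem of a nonempty clopen piece of $P$, using the fact that $P$ is perfect (so above any node of $P$ there are two incompatible extensions in $P$) to absorb whichever bit $t_n$ TWO plays — since both $0$ and $1$ extensions occur in the subtree of $P$ above a suitable splitting node, ONE first plays a block $o_{n+1}$ reaching such a splitting node, then TWO's bit keeps the play inside $P$. The limit is then a point of $P\subseteq Y$, so ONE wins. Conversely, if ONE has a winning strategy $\sigma$, I would build a perfect subset of $Y$ by a Cantor scheme: consider the tree of positions reachable when ONE follows $\sigma$ and TWO plays arbitrarily; TWO's binary choices give a natural binary branching, and because $\sigma$ is winning every branch yields a point of $Y$. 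The map from $^\omega\{0,1\}$ (TWO's play sequences) to the resulting limit points is continuous; I must check it is injective on a perfect set — two distinct bit-sequences for TWO force the plays to differ (once TWO deviates, the coordinate is fixed differently), so the map is actually injective, and its range is a continuous injective image of the Cantor set, hence compact; a bit more work (it need not be a homeomorphism onto a perfect set directly, but a continuous injection from $^\omega\{0,1\}$ has a range containing a perfect set, or one passes to a perfect subtree on which it is a homeomorphism) gives a nonempty perfect subset of $Y$.

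\emph{Main obstacle.} The delicate point is the "only if" halves: showing that a winning strategy for TWO actually forces $Y$ countable, and that a winning strategy for ONE actually produces a genuine \emph{perfect} (not merely uncountable or analytic) subset of $Y$. For TWO, the subtlety is that one must show every $y\in Y$ is "hit" — one shows instead the contrapositive strengthening that $^\omega\{0,1\}\setminus Y$ is comeager relative to the strategy tree, or uses that the unhit points form a set ONE could have steered into, contradicting that $\tau$ wins; making this rigorous is the crux. For ONE, extracting an honest perfect set from the strategy tree requires pruning to a perfect subtree on which TWO's bit-choices split the final positions cofinally and the limit map is a homeomorphism — a standard but fiddly Cantor-scheme construction. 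I expect the ONE/perfect-set direction to be the main labor, and I would organize it as the construction of a continuous injection $^\omega\{0,1\}\to Y$ followed by the observation that its range, being a continuous injective image of a compact metric space, contains a nonempty perfect set.
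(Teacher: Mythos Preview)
The paper does not prove this theorem at all: it is stated with a citation to Davis and closed with a \qed, so there is nothing to compare your argument to on the paper's side. I will therefore comment only on the soundness of your sketch.

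Three of your four half-directions are essentially correct. Your perfect-set $\Rightarrow$ ONE argument (steer to splitting nodes of the tree of $P$) is fine. Your ONE $\Rightarrow$ perfect-set argument is also fine, and in fact you are underselling it: the map $t\mapsto(\text{outcome when TWO plays }t)$ is a continuous injection from the compact space $^\omega\{0,1\}$ into the Hausdorff space $^\omega\{0,1\}$, hence a homeomorphism onto its image; that image is therefore closed and without isolated points, i.e.\ genuinely perfect in $^\omega\{0,1\}$. No pruning or ``bit more work'' is needed.

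The genuine gap is the direction ``TWO has a winning strategy $\Rightarrow$ $Y$ is countable,'' which you yourself flag as unresolved. Your tree-counting attempt fails for the reason you noticed, and the suggestion to import the perfect-set half is circular. Here is the missing idea. Fix a winning strategy $\tau$ for TWO. Call a $\tau$-position $p$ (with ONE to move) \emph{$y$-rejecting} if for \emph{every} finite block $o$ with $p{}^\frown o\sqsubset y$, the bit $\tau(p{}^\frown o)$ differs from $y(|p{}^\frown o|)$. First, every $y\in Y$ has some $y$-rejecting initial segment $p$: otherwise ONE could recursively choose blocks keeping the play along $y$, producing a $\tau$-play with outcome $y\in Y$, contradicting that $\tau$ wins. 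Second, for a fixed $p$ of length $m$, being $y$-rejecting says precisely that $y(m+n)=1-\tau\bigl(p{}^\frown y\!\upharpoonright[m,m+n)\bigr)$ for every $n\ge0$; this recursion determines $y\!\upharpoonright[m,\infty)$ uniquely from $p$. Hence each $\tau$-position $p$ rejects at most one $y$, and since there are only countably many such positions, $Y$ is countable. Insert this argument in place of your paragraph that trails off, and the proof is complete.
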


\begin{theorem} \label{1976} 
  There is a set $S\subseteq{}^{\omega}\omega$ such that for
  $1\le\kappa<2^{\aleph_0}$ the game ${\sf GS}_{\omega}(S,\kappa)$ is
  undetermined.
\end{theorem}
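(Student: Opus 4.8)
The plan is to find a single set $S\subseteq{}^{\omega}\omega$ such that the game ${\sf GS}_\omega(S)$, which (via a coding) is a Morton Davis game ${\sf G}^*(Y)$ for an appropriate $Y\subseteq{}^{\omega}\{0,1\}$, remains undetermined on every board number $\kappa<2^{\aleph_0}$. The key leverage is Davis's Theorem \ref{mdavis}: for a Morton Davis game, TWO wins iff $Y$ is countable and ONE wins iff $Y$ contains a nonempty perfect set. So ${\sf G}^*(Y)$ is undetermined precisely when $Y$ is uncountable but contains no perfect subset, i.e. $Y$ is a totally imperfect set of size $2^{\aleph_0}$ — a Bernstein-type set. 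My first step is therefore to fix such a $Y$ (a Bernstein set in $^{\omega}2$), so that ${\sf GS}_\omega(S)={\sf G}^*(Y)$ is undetermined on $\kappa=1$.

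Next I would analyze the $\kappa$-board version ${\sf GS}_\omega(S,\kappa)$ in terms of $Y$. A single board carries a play that decodes to an element $y\in{}^{\omega}2$ built by interleaving ONE's finite blocks $o_n$ with TWO's bits $t_n$; ONE wins that board iff $y\in Y$. On $\kappa$ boards, ONE wins iff on some board the decoded real lands in $Y$. The point is that $Y$ being totally imperfect should block ONE from winning even with $\kappa<2^{\aleph_0}$ boards, while $Y$ being uncountable (indeed large) should block TWO. For TWO not having a winning strategy in ${\sf GS}_\omega(S,\kappa)$: if TWO had one, then in particular TWO could beat ONE on board number $1$ while ignoring the other boards is not automatic, so I would instead argue that a winning strategy for TWO on $\kappa$ boards, restricted appropriately, yields (as in the $(a)\Rightarrow(c)$ style arguments and the proof of Theorem \ref{thm:TWOlaver}) a continuous injection of ${}^{\omega}2$ into $Y$ determined by the decoding maps on the $\kappa$ copies — contradicting total imperfectness of $Y$. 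The heart of this is that the Morton Davis decoding map $t\mapsto(\text{decoded real})$, parametrized over TWO's responses, is continuous and one-to-one on a perfect set of inputs.

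For ONE not having a winning strategy in ${\sf GS}_\omega(S,\kappa)$ when $\kappa<2^{\aleph_0}$: I would take a putative winning strategy $\sigma$ for ONE on $\kappa$ boards and run TWO against it so as to realize, on each of the $\kappa$ boards, a real outside $Y$. Since $Y$ is totally imperfect, $^{\omega}2\setminus Y$ is dense (in fact co-totally-imperfect — it too meets every perfect set), and TWO, playing one bit per inning per board, has enough freedom to steer each board off $Y$; here I would need that, for each board, no matter what finite blocks ONE inserts, TWO can drive the decoded real into the complement of $Y$, using that the complement meets every perfect set. Because there are only $\kappa<2^{\aleph_0}$ boards and $Y$ has no perfect subset, a diagonalization / simultaneous branch-selection argument (choosing TWO's bits board-by-board along a tree of ONE's possible blocks) produces a single $\sigma$-play in which every board's real avoids $Y$, so ONE loses. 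Finally, for $1\le n<\kappa$ with $n$ finite or countable, undeterminedness is immediate once both players lack winning strategies — but actually I only need that neither player has a winning strategy on $\kappa$ boards for each $\kappa<2^{\aleph_0}$, which is exactly what the two preceding paragraphs establish; the statement then follows.

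The main obstacle I anticipate is the ONE-side argument: showing that with $\kappa<2^{\aleph_0}$ boards TWO can simultaneously push all boards into the complement of $Y$. The subtlety is that TWO's moves on different boards are not independent only in the trivial sense that they happen in the same innings; the real work is a tree argument showing that for each fixed board the set of TWO-responses that keep the decoded real inside the totally imperfect set $Y$ is "small" in the sense of not containing a perfect set of branches, so TWO can avoid it, and that these avoidance requirements for the $\kappa$ boards can be met simultaneously — this is where the cardinality bound $\kappa<2^{\aleph_0}$ and the no-perfect-subset property of $Y$ must be combined carefully, essentially re-running the Bernstein-set construction inside the game tree.
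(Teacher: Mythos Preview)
Your ONE-side argument has a genuine gap. You correctly observe that for a fixed strategy $\sigma$ and a fixed board $\alpha$, the set of TWO-response sequences $t$ for which the decoded real on board $\alpha$ lands in $Y$ is totally imperfect (because the decoding map $t\mapsto f_\alpha(t)$ is a continuous injection of $^{\omega}2$ into $^{\omega}2$, so a perfect set of bad $t$'s would map to a perfect subset of $Y$). But you then need the \emph{union} of these $\kappa$ totally imperfect sets to miss a point of $^{\omega}2$, and that is simply false in general: already two totally imperfect sets can cover $^{\omega}2$ (take a Bernstein set and its complement). The phrase ``re-running the Bernstein-set construction inside the game tree'' does not help, because $Y$ has already been fixed; you would have to build $Y$ while simultaneously diagonalizing against all pairs $(\kappa,\sigma)$ with $\kappa<2^{\aleph_0}$, and there may be more than $2^{\aleph_0}$ such strategies when $\kappa$ is large. (Your TWO-side argument is also muddled---a winning strategy for TWO keeps plays \emph{out} of $Y$, not in it---but that side is in fact trivial: a winning strategy for TWO on $\kappa$ boards yields one on a single board, and Davis's theorem then forces $Y$ to be countable.)

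The paper avoids this difficulty by not using a Bernstein set at all. It splits into two cases. If $2^{\aleph_0}>\aleph_1$, one simply takes any $Y\subseteq{}^{\omega}2$ with $\aleph_0<\lvert Y\rvert<2^{\aleph_0}$. Uncountability of $Y$ handles the TWO side. For the ONE side, have TWO play the \emph{same} bit $t(n)$ on every board in inning $n$; each $f_\alpha$ is injective, so at most $\lvert Y\rvert$ values of $t$ are bad on board $\alpha$, hence at most $\kappa\cdot\lvert Y\rvert<2^{\aleph_0}$ values of $t$ are bad anywhere, and TWO picks a good one. If $2^{\aleph_0}=\aleph_1$, no intermediate-size $Y$ exists, and the paper falls back on a direct diagonalization (Lemma~\ref{grantham1}), which works because under {\sf CH} there are only $2^{\aleph_0}$ relevant pairs $(\kappa,\sigma)$. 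The key idea you are missing is precisely this use of a set of \emph{small} cardinality rather than a totally imperfect set of full cardinality.
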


\begin{proof}
  If $2^{\aleph_0}=\aleph_1$, the result follows from Lemma
  \ref{grantham1}. Thus we may assume that
  $2^{\aleph_0}>\aleph_1$. Choose a set $Y\subseteq {}^{\omega}\{0,1\}$
  with $\aleph_0<\vert Y\vert<2^{\aleph_0}$. Since $Y$ is uncountable,
  Theorem \ref{mdavis} implies that TWO has no winning strategy in the
  game ${\sf G}^*(Y)$.

  We show that for $\kappa<2^{\aleph_0}$, ONE has no winning strategy in
the $\kappa$-board game  ${\sf G}^*(Y,\kappa)$.

  Fix $\kappa<2^{\aleph_0}$, and consider any strategy $\sigma$ for
  ONE. For $\alpha<\kappa$ and $t\in{}^{\omega}\{0,1\}$, consider
  the play of ${\sf G}^*(Y,\kappa)$ in which ONE follows $\sigma$ and, for all $n<\omega$,
in inning $n$, TWO plays $t(n)$ on each board. For each
  $\alpha$, let $f_{\alpha}(t)$ be the element on board $\alpha$
  resulting from ONE applying the strategy $\sigma$, and TWO following
  $t$. For fixed $\alpha$, the function
  \[
  f_{\alpha}:{}^{\omega}\{0,1\}\rightarrow{}^{\omega}\{0, 1\}
  \]
  is one-to-one. Thus for each $\alpha$, we have
  $\vert\{t\in{}^{\omega}\{0,1\}\colon f_{\alpha}(t)\in Y\}\vert\le\vert
  Y\vert$. But then we have
  $$ \vert\bigcup_{\alpha\in\kappa}\{t\in{}^{\omega}\{0,1\}\colon f_{\alpha}(t)\in
  Y\}\vert\le\kappa\cdot\vert Y\vert < 2^{\aleph_0}. $$

  Thus, if TWO follows $q\in{}^{\omega}\{0,1\}\setminus
  \bigl(\bigcup_{\alpha\in\kappa}\{t\in{}^{\omega}\{0,1\}\colon f_{\alpha}(t)\in
  Y\}\bigr)$, this defeats ONE's strategy $\sigma$.
\end{proof}

\begin{lemma} \label{grantham1}  There is a set $S\subseteq\ ^\omega\{0,1\}$ such that the game ${\sf GS}_2(S,\kappa)$ is undetermined for every nonzero cardinal $\kappa$
such that $2^\kappa\le2^{\aleph_0}$.
\end{lemma}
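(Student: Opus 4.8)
The plan is to construct $S$ by a transfinite recursion of length $2^{\aleph_0}$, in the style of a Bernstein set, diagonalizing against two families of requirements. The first family ensures that TWO has no winning strategy in the one-board game $\text{GS}_2(S)$; the second, more delicate family ensures that ONE has no winning strategy in $\text{GS}_2(S,\kappa)$ for every $\kappa$ with $2^\kappa\le 2^{\aleph_0}$. The reduction to these two goals uses the elementary monotonicity fact (implicit in the discussion of cases (A), (B) above): if TWO had a winning strategy $\tau$ in $\text{GS}_2(S,\kappa)$, then TWO would win $\text{GS}_2(S)$ by simulating the $\kappa$-board game with ONE's single bit copied onto all boards and playing the board-$0$ component of $\tau$'s answers, since board $0$ of the simulation is exactly the real one-board play. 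So it suffices to kill all of TWO's strategies already in $\text{GS}_2(S)$: concretely, to arrange that for each strategy $\tau$ of TWO in $\text{GS}_2(S)$ the set $P_\tau$ of $\tau$-plays meets $S$ (there are $2^{\aleph_0}$ such $\tau$, and $|P_\tau|=2^{\aleph_0}$, since $P_\tau$ is parametrized by ONE's bit sequence).

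For ONE's side I only let TWO consider ``diagonal'' plays: at move $n$ TWO plays the constant function with value $t(n)\in\{0,1\}$, so TWO's play is coded by $t\in{}^{\omega}\{0,1\}$. Against such a TWO, the play produced by a strategy $\sigma$ of ONE depends only on the ``diagonal restriction'' $\rho\colon\bigcup_{n<\omega}{}^{n}\{0,1\}\to{}^{\kappa}\{0,1\}$ of $\sigma$, where $\rho(t\restriction n)$ is ONE's $n$-th move against the diagonal history $t\restriction n$; the outcome on board $\alpha$ is the point $g^{\rho}_{\alpha}(t)$ with $g^{\rho}_{\alpha}(t)_{2n+1}=t(n)$ and $g^{\rho}_{\alpha}(t)_{2n}=\rho(t\restriction n)(\alpha)$. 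In particular $t\mapsto g^{\rho}_{\alpha}(t)$ is injective, and $g^{\rho}_{\alpha}(t)=s$ forces $t$ to be the sequence of odd-indexed entries of $s$. There are at most $(2^{\kappa})^{\aleph_0}=2^{\aleph_0}$ such restrictions $\rho$ for a fixed $\kappa$ (here $2^{\kappa}\le 2^{\aleph_0}$ is used), and at most $2^{\aleph_0}$ relevant $\kappa$; so, together with TWO's $2^{\aleph_0}$ one-board strategies, the recursion has only $2^{\aleph_0}$ requirements.

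Now run the recursion: list all pairs $(\rho,\kappa)$ and all one-board strategies $\tau$ of TWO in order type $2^{\aleph_0}$, and build increasing disjoint sets $S^{\mathrm{in}},S^{\mathrm{out}}\subseteq{}^{\omega}\{0,1\}$, taking $S=S^{\mathrm{in}}$ at the end (undecided points going outside $S$). When handling $\tau$, put into $S^{\mathrm{in}}$ any not-yet-decided element of $P_\tau$; this guarantees the first family. When handling $(\rho,\kappa)$, pick $t_\rho$ with $g^{\rho}_{\alpha}(t_\rho)\notin S^{\mathrm{in}}$ for all $\alpha<\kappa$ — possible because each ``bad'' $t$ is the odd part of some element of $S^{\mathrm{in}}$, and $|S^{\mathrm{in}}|<2^{\aleph_0}$ at that stage — and then put every $g^{\rho}_{\alpha}(t_\rho)$, $\alpha<\kappa$, into $S^{\mathrm{out}}$. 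Then for any strategy $\sigma$ of ONE in $\text{GS}_2(S,\kappa)$ with diagonal restriction $\rho$, the diagonal play $t_\rho$ yields on every board a point $g^{\rho}_{\alpha}(t_\rho)\in S^{\mathrm{out}}$, hence not in $S$, so $\sigma$ loses; thus ONE has no winning strategy in $\text{GS}_2(S,\kappa)$ for any relevant $\kappa$, and the lemma follows.

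The one point requiring care — and the only place I expect real friction, since the game-theoretic part is routine — is showing the recursion never runs out of room, i.e. that fewer than $2^{\aleph_0}$ points are decided before each stage $\xi<2^{\aleph_0}$; this is exactly what makes the choices of the fresh point of $P_\tau$ and of $t_\rho$ legitimate. A $(\rho,\kappa)$-stage decides at most $\kappa$ points, and the arithmetic is controlled by the observation (a consequence of König's theorem) that $2^{\kappa}\le 2^{\aleph_0}$ forces $\kappa<\operatorname{cf}(2^{\aleph_0})$: if $\kappa\ge\operatorname{cf}(2^{\aleph_0})$ then $2^{\kappa}=(2^{\aleph_0})^{\kappa}\ge(2^{\aleph_0})^{\operatorname{cf}(2^{\aleph_0})}>2^{\aleph_0}$. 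Writing $\theta=\operatorname{cf}(2^{\aleph_0})$, a regular cardinal with $\theta\le 2^{\aleph_0}$, every stage decides $<\theta$ points, so after $\xi<2^{\aleph_0}$ stages at most $\sum_{\eta<\xi}(<\theta)$ points are decided: this is $<\theta\le 2^{\aleph_0}$ if $|\xi|<\theta$, and is at most $|\xi|\cdot\theta=|\xi|<2^{\aleph_0}$ if $|\xi|\ge\theta$. So in every case $<2^{\aleph_0}$ points have been decided, and the construction goes through.
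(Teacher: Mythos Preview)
Your proof is correct and follows essentially the same approach as the paper's: both reduce to killing TWO's one-board strategies while defeating ONE only against ``diagonal'' TWO-plays (the paper names this restricted game $\text{H}(S,\kappa)$), and both run a Bernstein-style transfinite construction of length $2^{\aleph_0}$, using $\kappa<\operatorname{cf}(2^{\aleph_0})$ (from K\"onig's theorem) to keep the forbidden sets small. Your bookkeeping via $S^{\mathrm{in}}/S^{\mathrm{out}}$ and diagonal restrictions $\rho$ is a cosmetic variant of the paper's $s_\alpha$/$T_\alpha$ and pairs $(\kappa_\alpha,\sigma_\alpha)$; the arguments are the same in substance.
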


\begin{proof}
The construction is a modified version of the usual diagonalization
argument showing that there is some $S$ with ${\sf GS}_2(S)$ undetermined.

Let $\text{H}(S,\kappa)$ be the variant of ${\sf GS}_2(S,\kappa)$ in which
TWO is required to make the same move on all $\kappa$ boards. Thus, at move
$n$,
ONE chooses a function $f_n:\kappa\to\{0,1\}$, and then TWO chooses a
$\textit {constant}$ function $g_n:\kappa\to\{0,1\}$. It will suffice to
construct a set $S\subseteq\ ^\omega\{0,1\}$ so that TWO has no winning
strategy in ${\sf GS}_2(S)$ while, for each nonzero cardinal $\kappa$ with
$2^\kappa\le2^{\aleph_0}$, ONE has no winning strategy in
$\text{H}(S,\kappa)$.

Observe that, if  $2^\kappa\le2^{\aleph_0}$, then there are only
$2^{\aleph_0}$ strategies for ONE in $\text{H}(S,\kappa)$. Let
$\{\tau_\alpha:\alpha< 2^{\aleph_0}\}$ be the set of all strategies for TWO
in ${\sf GS}_2(S)$, and let
$\{(\kappa_\alpha,\sigma_\alpha):\alpha< 2^{\aleph_0}\}$ be the set of all
pairs $(\kappa,\sigma)$ where $\kappa$ is a nonzero cardinal with
$2^\kappa\le2^{\aleph_0}$ and $\sigma$ is a strategy for ONE in
$\text{H}(S,\kappa)$.

Let $\lambda=\text{cf }2^{\aleph_0}$. We are going to define, for each
$\alpha< 2^{\aleph_0}$, a point $s_\alpha\in\ \ ^\omega\{0,1\}$ and a set
$T_\alpha\subseteq\ ^\omega\{0,1\}$ with
$|T_\alpha|\le\kappa_\alpha< \lambda$. The point $s_\alpha$ is decreed to
be a winning play for ONE, and is chosen so as to defeat TWO's strategy
$\tau_\alpha$ in ${\sf GS}_2(S)$; the elements of $T_\alpha$ are decreed to
be winning for TWO, are are chosen so as to defeat ONE's strategy
$\sigma_\alpha$ in $\text{H}(S,\kappa_\alpha)$. Of course we require that
$s_\alpha\notin T_\beta$ for all $\alpha,\beta< 2^{\aleph_0}$.

Let $\alpha< 2^{\aleph_0}$ and suppose that $s_\beta$ and $T_\beta$ have
been defined for all $\beta <\alpha$.
Since $\alpha< 2^{\aleph_0}$ and $|T_\beta|< \lambda$ for all $\beta$, we
have $\lvert\bigcup_{\beta< \alpha}T_\beta\rvert<2^{\aleph_0}$. Inasmuch
as there are $2^{\aleph_0}$ different $\tau_\alpha$-plays of
${\sf GS}_2(S)$, we can choose a $\tau_\alpha$-play $s_\alpha$ which is not
in any $T_\beta$ with $\beta< \alpha$.

Next, choose a sequence $(\varepsilon_0,\varepsilon_1,\varepsilon_2,\dots)\in\ ^\omega\{0,1\}$ which differs from all the sequences $(s_\beta(1),s_\beta(3),s_\beta(5),\dots)$ for $\beta\le\alpha$. Let $(f_0,g_0,f_1,g_1,f_2,g_2,\dots)$ be the $\sigma_\alpha$-play of $\text{H}(S,\kappa_\alpha)$ in which TWO plays the constant function $g_n(\xi)=\varepsilon_n$ for every $n< \omega$. Let $T_\alpha$ consist of the sequences 
\[
   \begin{tabular}{lcl}
          $t_{\alpha,\xi}$ & = & $(f_0(\xi),g_0(\xi),f_1(\xi),g_1(\xi),f_2(\xi),g_2(\xi),\dots)$ \\
                                  &  = & $(f_0(\xi),\varepsilon_0,f_1(\xi),\varepsilon_1,f_2(\xi),\varepsilon_2,\dots)$
  \end{tabular}
\]
    for $\xi\in\kappa_\alpha$. Then $|T_\alpha|\le\kappa_\alpha$, and $s_\beta\notin T_\alpha$ for all $\beta\le\alpha$.

It is clear from the construction that the set $S=\{s_\alpha:\alpha< 2^{\aleph_0}\}$ has the desired properties.
\end{proof}

\section{Open problems}

It is consistent, relative to the consistency of the existence of a proper class of measurable cardinals, that Conjecture \ref{galvin2}, and thus Conjecture \ref{galvin1}, are true.

\begin{question} Is Conjecture \ref{galvin1} a theorem of {\sf ZFC}?
\end{question}

\begin{question} Is Conjecture \ref{galvin2} a theorem of {\sf ZFC}?
\end{question}

We saw also that if it is consistent that there is a measurable cardinal, then it is consistent that for each $S\subseteq\, ^{\omega}\omega$ the game ${\sf GS}_{\omega}(S, (2^{\aleph_0})^+)$ is determined. It is not clear if this result is optimal.

\begin{question} 
 Is there a set $S\subseteq {}^\omega\omega$ such that ${\sf GS}_{\omega}(S,2^{\aleph_0})$ is undetermined?
\end{question}

Recall the Morton Davis game ${\sf G}^*(S)$ introduced right before Theorem \ref{mdavis}. The $\kappa$-board version ${\sf G}^*(S,\kappa)$ was considered in the proof of Theorem \ref{1976}: 
\begin{question}
  Fix $\kappa$ with $1<\kappa\le 2^{\aleph_0}$. Characterize those
  subsets $S\subseteq {}^{\omega}\{0,1\}$ for which ONE has a winning strategy in ${\sf G}^*(S,\kappa)$.
\end{question}

We also do not know the answer to the following questions: 

\begin{question} 
 Is there a set $S\subseteq {}^\omega\omega$ such that ${\sf GS}_{\omega}(S,2^{\aleph_0})$ is determined but ${\sf GS}_{\omega}(S,\aleph_0)$ is undetermined?
\end{question}

\begin{question}
  If, for some cardinal $\lambda$, there is  a cardinal $\kappa$ such that ${\sf D}(\lambda,\kappa)$ holds, is then $\kappa\ge(2^\lambda)^+$?
\end{question}

\section*{Acknowledgements} As the reader will glean from the paper, we have benefited from the insights and communications of a large number of mathematicians in the development of the work reported in this paper. Some of them are no longer with us. Foremost we would like to acknowledge the fundamental contributions of Jim Baumgartner and Richard Laver. 
We would also like to acknowledge the contributions of M. Magidor, C. Gray, R.M. Solovay, J. Mycielski, A. Caicedo, R. Ketchersid, S. Hechler and R. McKenzie who have contributed to our understanding of the material presented here. We also thank the referee for a very careful reading of the paper and for remarks that improved the exposition in several respects.

\end{document}